\newtheorem{theorem}{Theorem}[section]
\newtheorem{claim}[theorem]{Claim}
\newtheorem{lemma}[theorem]{Lemma}
\newtheorem{problem}[theorem]{Problem}
\newtheorem{proposition}[theorem]{Proposition}
\newtheorem{corollary}[theorem]{Corollary}
\newtheorem{question}[theorem]{Question}
\newtheorem{fact}[theorem]{Fact}
\newtheorem{remark}[theorem]{Remark}
\newtheorem{em-example}[theorem]{Example}
\newtheorem{em-def}[theorem]{Definition}
\newtheorem{conjecture}[theorem]{Conjecture}
\newenvironment{example}{\begin{em-example} \em }{ \end{em-example}}
\newenvironment{definition}{\begin{em-def} \em  }{ \end{em-def}}
\DeclareMathOperator{\GL}{GL}
\DeclareMathOperator{\ke}{ker}
\DeclareMathOperator{\cof}{cofin}
\newcommand{\mb}{$\blacklozenge\ $}
\newcommand{\nilpo}{\mathscr N}
\def\AA{{\mathfrak A}}
\newcommand\Mar{\mathfrak M}
\def\T{{\mathbb T}}
\def\Q{{\mathbb Q}}
\def\Z{{\mathbb Z}}
\def\N{{\mathbb N}}
\def\C{{\mathcal C}}
\def\cont{\mathfrak{c}}
\def\C{{\mathscr C}}
\def\I{{\mathscr I}}
\def\Po{{\mathscr P}}
\def\M{{\mathscr M}}
\def\Zar{\mathfrak Z}
\def\Mar{\mathfrak M}
\def\Zar{\mathfrak Z}
\newcommand{\zar}{\mathfrak Z}
\newcommand{\mar}{\mathfrak M}
\newcommand{\pz}{\mathfrak Z}
\author{Marco Bonatto\footnote{ Department of Mathematics and Computer Science, University of Ferrara, Via Macchiavelli 30, 44121 Ferrara, Italy, {\tt marco.bonatto.87@gmail.com}}, Dikran Dikranjan\footnote{Department of Mathematics, Computer Science and Physics, University of Udine, Via Delle Scienze 206, 33100 Udine, Italy, {\tt dikran.dikranjan@uniud.it}}, 
Daniele Toller\footnote{Department of Computer Science, AAlborg Universitet, Selma Lagerløfs Vej 300, 9220 Aalborg, Denmark, {\tt tollerdaniele@gmail.com}}}
\title{Groups with cofinite Zariski topology and potential density
\thanks{
MSC: 
20E28,   
20F18, 		
08A50,		
22A05,		
54H11		
\endgraf
Keywords: topologizable group, non-topologizable group, Noetherian space, Zariski topology, Markov topology, centralizer topology, 
monomial topology, independent group topologies, unconditionally closed subset, potentially dense subset
}
}
\date{Respectfully dedicated to the memory of 
 Mitrofan Choban}
\begin{document}

\maketitle

\begin{abstract}
Tkachenko and Yaschenko \cite{TY} characterized the abelian groups $G$ such that all proper unconditionally closed subsets of
$G$ are finite, these are precisely the abelian groups $G$ having cofinite Zariski topology (they proved that 
such a $G$ is either almost torsion-free or of prime exponent). The authors  connected this fact to Markov's notion of potential 
density and the existence of pairs of independent group topologies. 
Inspired by their work, we examine the class $\C$ of groups having cofinite Zariski topology 
in the general case, obtaining a number of very strong restrictions on these groups
in the non-abelian case which suggest the bold conjecture that a group with cofinite Zariski topology is necessarily either abelian
or finite.
We show that  Tkachenko--Yaschenko theorem fails in the non-abelian case and we offer a natural counterpart 
 in the general case using a partial Zariski topology and an appropriate stronger version of the property almost torsion-free.
\end{abstract}


\section{Introduction}

We denote by $\N$ the set of naturals, by $\N_+$ the set of positive integers and by $\Z$ the group of integers.
If $X$ is a set, the symbol $\cof_X$ will denote the cofinite topology on $X$. 

For a group $G$ the neutral element of $G$ is denoted by $e_G$, or simply by $e$.
For $n \in \N_+$, let $G[n] = \{ g \in G \mid g^n = e\} \subseteq G$ denote the \emph{$n$-socle} of $G$. 
The elements of $t(G) = \bigcup_{n\in \N_+} G[n]$ are called \emph{torsion elements}; 
$G$ is called \emph{torsion-free} (resp.,  \emph{almost torsion-free}) if $G[n] = \{e\}$ (resp., $|G[n]|<\infty$) for every $n\in \N_+$. If $G$ is abelian, $G[n]$ is a subgroup of $G$, and $G$ is almost torsion-free if and only if $G[p]$ is finite for every prime number $p$. A group $G$ is said to be of  {\em finite exponent} (or, {\em bounded}) when $G=G[m]$ for some $m\in \N_+$, 
the smallest $m$ with this property is called exponent of $G$. 
The remaining notation and terminology from group theory used throughout the paper can be found either in \S \ref{N&T} or \cite{Robinson}. 

\subsection{Markov's problems and two topologies on infinite groups}
The following problem was raised by Markov in 1944: 
 
\begin{problem}\label{Markov's:problem}
 Does there exist an infinite non-topologizable (i.e., admitting no non discrete Hausdorff group topology) group?
\end{problem}

Moreover, Markov introduced four special families of subsets of a group $G$:  

\begin{definition}\cite{Markov1} \label{Markov's:definition}
A subset $X$ of a group $G$ is called:
\begin{itemize}
  \item[(a)] {\em elementary algebraic} if there exist an integer $n>0$, elements $g_1,\ldots, g_n\in G$ and $\varepsilon_1,\ldots,\varepsilon_n \in\{-1,1\}$, such that 		
  $X=\{x\in G: g_1 x^{\varepsilon_1} g_2 x^{\varepsilon_2} \cdots g_n x^{\varepsilon_n} = e_G\}$;
  \item[(b)] 
{\em algebraic} if $X$ is an intersection of  finite unions of algebraic subsets of $G$;
  \item[(c)]   {\em unconditionally closed} if $X$ is closed in {\em every\/} Hausdorff group topology on $G$;
   \item[(d)] {\em potentially dense} if $X$ is dense in {\em some\/} Hausdorff group topology on $G$.
\end{itemize}
\end{definition}

Obviously, (a) $\to$ (b) $\to$ (c), yet the question of whether (c) $\to$ (a) always holds true remained open: 

\begin{problem}\label{Markov's:problem*}
 Is (c) $\to$ (a) always true?
\end{problem}

On the other hand, a proper unconditionally closed set cannot be potentially dense. Markov proved that every infinite subset of $\Z$ is potentially dense
and posed the problem to characterize the potentially dense subsets in general. 

As pointed out in \cite{Selected} and \cite{Reflection}, one can more conveniently face these two problems by introducing two topologies as follows. 
Every singleton is an elementary algebraic subset, so every finite subset is algebraic. We denote by $\mathfrak E_G$ the family of elementary algebraic subsets of $G$. The family $\AA_G$ of all algebraic sets of $G$ contains all finite subsets of $G$ and $\AA_G$ is closed under taking finite unions and arbitrary intersections. So, ${\mathfrak A}_G$ is the family of closed sets of a $T_1$ topology ${\mathfrak Z}_G$ on $G$, named {\em Zariski topology of $G$} (\cite{Selected,Reflection}). 
Markov  did not explicitly introduce this topology, although it was implicitly present in \cite{Markov1}. It was  explicitly introduced only in 1977 by Bryant \cite{Bryant}  under the name \emph{verbal topology}. 

The Zariski topology has a very transparent description for abelian groups, since 
for an abelian group $G$  one has $\AA_G = \{0\}\cup \{ g + G[n] \mid g \in G, n \in \N \}$ (see Proposition \ref{Abel:Mon} and \cite{Articolone} for further properties). 

On the other hand, the family of unconditionally closed subsets of $G$ coincides with the family of closed subsets of a $T_1$ topology $\mar _G$ on $G$, called \emph{the Markov topology of $G$} in \cite{Selected,Reflection}. It coincides with the intersection of all Hausdorff group topologies on $G$, and the implication (b) $\to$ (c) above means that $\mathfrak Z_G \leq \mathfrak M_G$ for every group $G$. In these terms, Problem \ref{Markov's:problem*} reduces to the simple equality $\mathfrak Z_G = \mathfrak M_G$. 
Markov \cite{Markov1} positively answered Problem \ref{Markov's:problem*} for countable groups (i.e., by proving that $\mathfrak Z_G = \mathfrak M_G$ for any  countable group $G$, although none of these two topologies explicitly appeared in his paper). 
In the same paper, he asked whether this equality remains true for an arbitrary group.  This remained an open problem for decades, until 1979 Hesse's negative solution in \cite{Hesse}, where he constructed examples of arbitrarily large groups $G$ such that $\mathfrak M_G$ is discrete, but $\mathfrak Z_G$ is non-discrete.  Shakhmatov and the second named author \cite{Reflection} proved that ${\mathfrak Z}_G = {\mathfrak M}_G$ for abelian groups (see also \cite{Sip2}).
Banakh, Guran and  Protasov \cite{BGP} made a major progress towards understanding the equality ${\mathfrak Z}_G = {\mathfrak M}_G$, by proving that it holds in all infinite permutation groups $G = S(X)$.
Recently,  Shakhmatov and Yañez \cite{SY} established the equality $\mathfrak Z_F = \mathfrak M_F$ for free non-abelian groups $F$. 

These topologies are very useful for understanding and treating also Problem \ref{Markov's:problem}. A group $G$ is called {\em ${\mathfrak Z}$-discrete} (resp., 
{\em ${\mathfrak Z}$-Haudorff}) if $(G,{\mathfrak Z}_G)$ is discrete (resp., Hausdorff). Similarly {\em ${\mathfrak M}$-discrete} ({\em ${\mathfrak M}$-Haudorff}) groups are defined, and this terminology is similarly extended to other topological properties. In these terms, the  ${\mathfrak M}$-discrete groups are precisely the non-topologizable groups. Since  ${\mathfrak Z}_G \leq {\mathfrak M}_G$ holds for every group $G$, ${\mathfrak Z}$-discrete groups are  non-topologizable and this was used by Ol$'$shankii to build the first 
example of a countable non-topologizable group (more information and reference on this topic can be 
found in the surveys \cite{Selected,survey,secondo:survey}, along with many examples). 


The utility of these topologies for the solution of Markov's problem on potential density will be discussed in \S 1.3. 
For further applications of these topologies see \cite{DS_ConnectedMarkov}
(for the solution of the fourth Markov problem of connected topologization of abelian groups) and \cite{DS:final} (for the solution of Protasov-Comfort's problem on minimally almost periodic group topologies).



\subsection{$\mathfrak Z$-Noetherian groups}\label{NoetherianGettho}

A topological space $X$ is {\em Noetherian}, if $X$ satisfies the ascending chain condition on open sets.
Obviously, a Noetherian space is compact, and a subspace of a Noetherian space is Noetherian itself. Actually, a space is Noetherian if and only if all its subspaces are compact. Hence, an infinite Noetherian space is never Hausdorff.

 Let $K$ be a field, and $d$ be a positive integer.  Consider the finite dimensional vector space $K^d$, and the ring $K[x_1, \ldots, x_d]$ of polynomials in $d$ variables with coefficients in the field $K$. Recall that the family of zero-sets of those polynomials is a subbase of the closed sets of a Noetherian $T_1$ topology $\mathcal A_{K^d}$ on $K^d$, usually called the {\em affine topology} of $K^d$.
If $X$ is a subset of $K^d$, the affine topology of $X$ is defined as $\mathcal A_X = \mathcal A_{K^d} \restriction _X$.
In particular, the full linear group $\GL_n(K)$ and all its subgroups carry the topology induced by $\mathcal A_{K^{n^2}}$ (via the embedding in $K^{n^2}$). 
It is well known that  the affine topology $\mathcal A_{K^d}$ is Noetherian. 
%

Bryant \cite{Bryant} studied first the class $\nilpo$ of $\zar$-Noetherian groups, under the name `\emph{groups which satisfy \emph{min}-closed}' (i.e. the minimal condition on Zariski closed sets). He proved that the classes of groups in the following example are $\mathfrak Z$-Noetherian.

\begin{example}   \label{abeliani e lineari sono ZNoet}
\begin{itemize}
\item[(a)] \cite[Theorem 3.5]{Bryant} If $G$ is a linear group, then $\mathfrak Z_G \subseteq \mathcal A _G$.  In particular, $G$ is $\mathfrak Z$-Noetherian. 
\item[(b)]  \cite[Corollary 3.7]{Bryant} Every finitely generated, abelian-by-nilpotent-by-finite group is $\mathfrak Z$-Noetherian.
\item[(c)] \cite[Theorem 3.8]{Bryant} Every abelian-by-finite group is $\mathfrak Z$-Noetherian.
\end{itemize}
\end{example}

Bryant then proved that the class of $\zar$-Noetherian groups is stable under taking subgroups, and under taking finite products:
\begin{fact} \label{prod fin di Znoet}\label{ZNoet ereditaria}
\begin{itemize}
\item[(a)]  \emph{\cite[Lemma 3.3]{Bryant}} Every subgroup of a $\mathfrak Z$-Noetherian group is $\mathfrak Z$-Noetherian.
\item[(b)]  \emph{\cite[Lemma 3.4]{Bryant}} The finite product of $\mathfrak Z$-Noetherian groups is a $\mathfrak Z$-Noetherian group.
\end{itemize}
\end{fact}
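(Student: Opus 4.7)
My plan for (a) is to compare $\mathfrak Z_H$ with the restricted topology $\mathfrak Z_G\restriction_H$. Given any elementary algebraic set $A = \{x \in H : g_1 x^{\epsilon_1} \cdots g_n x^{\epsilon_n} = e\}$ of $H$, the defining elements $g_1, \ldots, g_n$ lie in $G$ as well, so the same word equation defines an elementary algebraic set $A' \subseteq G$ with $A = A'\cap H$. Since restriction to $H$ commutes with finite unions and arbitrary intersections, every $\mathfrak Z_H$-closed subset of $H$ has the form $C\cap H$ for some $\mathfrak Z_G$-closed set $C$, so $\mathfrak Z_H\leq \mathfrak Z_G\restriction_H$. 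Being a subspace of the Noetherian space $(G, \mathfrak Z_G)$, the topology $\mathfrak Z_G\restriction_H$ is Noetherian; and a coarser topology on the same underlying set is again Noetherian, since any ascending chain of $\mathfrak Z_H$-open sets is an ascending chain of $\mathfrak Z_G\restriction_H$-open sets and therefore stabilizes. Hence $\mathfrak Z_H$ is Noetherian.

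For (b), by induction on the number of factors it suffices to treat a product $G_1 \times G_2$ of two $\mathfrak Z$-Noetherian groups. The key observation is that componentwise multiplication in $G_1\times G_2$ forces every word equation
\[
(g_1^{(1)}, g_1^{(2)})\,(x,y)^{\epsilon_1}\cdots (g_n^{(1)}, g_n^{(2)})\,(x,y)^{\epsilon_n} = (e,e)
\]
to decouple into two independent equations in $x\in G_1$ and $y\in G_2$, with the \emph{same} exponents $\epsilon_k$. Hence every elementary algebraic subset of $G_1\times G_2$ factors as $A_1 \times A_2$ with $A_i$ elementary algebraic in $G_i$, and such a rectangle is closed in the product topology $\mathfrak Z_{G_1}\times \mathfrak Z_{G_2}$. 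Passing to finite unions and arbitrary intersections yields $\mathfrak Z_{G_1\times G_2}\leq \mathfrak Z_{G_1}\times \mathfrak Z_{G_2}$, and the argument then concludes exactly as in (a) via the Noetherianity-transfer under coarsening, once we know that the product of two Noetherian spaces is Noetherian.

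The main obstacle is this last ingredient: the purely topological lemma that a finite product of Noetherian spaces is Noetherian. It can be established via the descending chain condition on closed sets: given a descending chain $C_n$ of closed subsets of $X\times Y$, for each $x\in X$ the fibers $(C_n)_x = \{y : (x,y)\in C_n\}$ form a stabilizing chain in the Noetherian space $Y$, and Noetherian induction on $X$ then yields a uniform bound on the stabilization index. Apart from this standard fact, the proof is bookkeeping: both operations -- passing to a subgroup and taking a finite direct product -- yield only \emph{coarser} Zariski topologies than the natural ambient ones, which is exactly what allows the Noetherian hypothesis on the factors to transfer to the whole.
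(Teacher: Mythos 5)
The paper states this result as a Fact and refers to Bryant for both proofs, so there is no in-paper argument to compare against; your reconstruction is the natural one. Part (a) is correct and complete: every elementary algebraic subset of $H$ is the trace on $H$ of an elementary algebraic subset of $G$ (same word, with coefficients viewed in $G$), hence $\mathfrak Z_H\leq\mathfrak Z_G\restriction_H$, and Noetherianity passes to subspaces and to coarser topologies. Likewise, in (b) the decoupling of a word equation over $G_1\times G_2$ into two coordinate equations with the same exponents is correct, so every elementary algebraic subset of $G_1\times G_2$ is a rectangle $E_{w_1}^{G_1}\times E_{w_2}^{G_2}$, and the resulting inequality $\mathfrak Z_{G_1\times G_2}\leq\mathfrak Z_{G_1}\times\mathfrak Z_{G_2}$ is exactly \cite[Theorem 3.4]{ProductivityZariski}, which the paper itself invokes in the proof of Theorem \ref{prop :dem}.

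The one genuine gap is in your treatment of the auxiliary topological lemma. The lemma itself (a finite product of Noetherian spaces is Noetherian) is true and standard, but the argument you sketch does not work as stated: knowing that for each $x$ the fiber chain $(C_n)_x$ stabilizes at some index $N(x)$ gives no purchase for a ``Noetherian induction on $X$'', because the sets $\{x: N(x)\leq n\}$ need not be open or closed, so quasi-compactness of $X$ yields no uniform bound, and a minimal bad closed subset $Z_0\subseteq X$ may have its witnesses of non-stabilization spread over a dense subset of $Z_0$. A correct proof needs one extra idea. For instance, show every subspace $S$ of $X\times Y$ is quasi-compact: given a cover of $S$ by open boxes, perform Noetherian induction on the closed subsets $Z\subseteq X$ for which $S\cap(Z\times Y)$ has no finite subcover; a minimal such $Z_0$ must be irreducible (otherwise split it into two proper closed pieces), and then, covering the quasi-compact set $\pi_Y\bigl(S\cap(Z_0\times Y)\bigr)\subseteq Y$ by finitely many $V_1,\dots,V_k$ coming from boxes $U_i\times V_i$ that meet $Z_0\times Y$, the set $O=\bigcap_{i}(U_i\cap Z_0)$ is a nonempty open subset of the irreducible $Z_0$ (finitely many nonempty, hence dense, opens intersect), these finitely many boxes cover $S\cap(O\times Y)$, and minimality supplies a finite subcover of $S\cap((Z_0\setminus O)\times Y)$ --- a contradiction. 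Alternatively, just quote the product lemma as a known fact; with it in hand, the rest of your argument for (b) is complete.
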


In other words, a finite direct product $G = \prod_{i = 1}^n G_i$ is $\mathfrak Z$-Noetherian if and only if for every $i = 1, \ldots, n$ the group $G_i$ is $\mathfrak Z$-Noetherian. These results were  extended as follows in \cite{survey}.
  
\begin{theorem}\label{group:Z:Noeth:iff:every:countable:subgroup:is}\label{Product:Z:Noeth:iff:} 
\begin{itemize}
\item[(a)]\cite[Theorem 5.5]{survey} A group $G$ is $\mathfrak Z$-Noetherian if and only if every countable subgroup of $G$ is $\mathfrak Z$-Noetherian.
\item[(b)]\cite[Theorem 5.7]{survey}  Let $\{ G_i \mid i \in I\}$ be a family of groups, and $G = \prod_{i \in I} G_i$. Then $G$ is $\mathfrak Z$-Noetherian if and only if every $G_i$ is $\mathfrak Z$-Noetherian and all but finitely many of the groups $G_i$ are abelian.
\end{itemize}
\end{theorem}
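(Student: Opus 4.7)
For (a), the forward direction is immediate from Fact~\ref{ZNoet ereditaria}(a). For the converse I would argue by contradiction. Suppose $G$ is not $\mathfrak Z$-Noetherian, witnessed by a strictly descending chain $C_1 \supsetneq C_2 \supsetneq \cdots$ of $\mathfrak Z_G$-closed sets, and choose $a_n \in C_n \setminus C_{n+1}$. Since $C_{n+1}$ is an intersection of finite unions of elementary algebraic subsets of $G$, the failure $a_n \notin C_{n+1}$ yields a single such finite union $B_n$ with $C_{n+1} \subseteq B_n$ and $a_n \notin B_n$. Each $B_n$ is defined by finitely many parameters of $G$; collect them into a finite set $S_n$ and form $H := \langle \{a_n : n \in \N_+\} \cup \bigcup_n S_n \rangle$, a countable subgroup of $G$. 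The defining words of the $B_n$'s also describe elementary algebraic subsets of $H$, so each $B_n \cap H$ is $\mathfrak Z_H$-closed. The chain $(B_1 \cap \cdots \cap B_n) \cap H$ is then strictly descending, because $a_{n+1} \in C_{n+1} \subseteq B_k$ for all $k \le n$ while $a_{n+1} \notin B_{n+1}$. This contradicts the hypothesis that $H$ is $\mathfrak Z$-Noetherian.

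For the forward direction of (b), each $G_i$ is a subgroup of $\prod_i G_i$ and hence $\mathfrak Z$-Noetherian by Fact~\ref{ZNoet ereditaria}(a). To exclude infinitely many non-abelian factors, assume $I' := \{i \in I : G_i \text{ is non-abelian}\}$ is infinite. Pick a non-central $b_i \in G_i$ for each $i \in I'$, enumerate $i_1, i_2, \ldots \in I'$, and define $g_n \in G$ by placing $b_{i_n}$ in coordinate $i_n$ and $e$ elsewhere. The centralizer $C_G(g_n) = \{x \in G : x g_n x^{-1} g_n^{-1} = e\}$ is elementary algebraic and decomposes as $C_{G_{i_n}}(b_{i_n}) \times \prod_{j \neq i_n} G_j$. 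With the $i_k$ distinct, the chain $D_n := \bigcap_{k \le n} C_G(g_k)$ is strictly descending: the $i_{n+1}$-coordinate is unconstrained in $D_n$, so any element placing there a non-centralizing value realizes $D_n \setminus D_{n+1} \neq \emptyset$. This violates the $\mathfrak Z$-Noetherianity of $G$.

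For the reverse direction of (b), let $I_0 \subseteq I$ be the finite set of indices with $G_i$ non-abelian; by Fact~\ref{ZNoet ereditaria}(b), $N := \prod_{i \in I_0} G_i$ is $\mathfrak Z$-Noetherian. Applying Fact~\ref{ZNoet ereditaria}(b) once more to the two-factor decomposition $\prod_i G_i = N \times \prod_{i \in I \setminus I_0} G_i$, the task reduces to showing that an arbitrary product $A = \prod_i A_i$ of abelian $\mathfrak Z$-Noetherian groups is $\mathfrak Z$-Noetherian. By part (a), it suffices to verify this on countable subgroups $H \le A$; each such $H$ is a countable abelian group contained in $\prod_i \pi_i(H)$, with every $\pi_i(H) \le A_i$ again a countable abelian $\mathfrak Z$-Noetherian group. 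One then invokes the explicit description of the Zariski topology on an abelian group---closed sets are intersections of finite unions of cosets of socles $A[n] = \prod_i A_i[n]$---to verify the descending chain condition.

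I expect the principal obstacle to be precisely this last step. Although the identity $A[n] \cap A[m] = A[\gcd(n,m)]$ tames intersections of individual cosets of socles, an arbitrary intersection of finite unions of cosets need not itself be a finite union, so a careful coordinate-wise argument---leveraging the DCC on the socle lattice $(\N_+, \mid)$ together with the $\mathfrak Z$-Noetherianity of each $A_i$---is needed to uniformly bound the length of any strictly descending chain. An additional subtlety is that passing to countable subgroups does not reduce the index set $I$ to a countable one, since a countable subgroup of an uncountable product may have non-trivial projection in uncountably many coordinates; the combinatorial reduction must therefore handle infinite products directly rather than iterate the finite case of Fact~\ref{ZNoet ereditaria}(b).
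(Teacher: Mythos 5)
Your arguments for part (a) and for the forward direction of part (b) are correct. (The paper itself gives no proof of this theorem --- it is quoted from \cite[Theorems 5.5 and 5.7]{survey} --- but your countable-subgroup reduction in (a), collecting the finitely many coefficients of the witnessing words together with the separating points $a_n$ into a countable subgroup $H$ and checking that $E_w^G\cap H=E_w^H$, is exactly the technique the paper uses elsewhere for the analogous cofiniteness statement, cf.\ Lemma \ref{Lem:countableSub}; and your descending chain of centralizers in the forward direction of (b) is the standard witness.)

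The genuine gap is in the reverse direction of (b), and you have located it yourself: you never actually verify that an infinite product of abelian groups satisfies the descending chain condition on Zariski-closed sets, and the coordinate-wise analysis of intersections of finite unions of cosets of socles that you sketch is indeed delicate (a descending intersection of finite unions of cosets need not be a finite union of cosets, and B.\,H.~Neumann-type covering arguments are needed to control it). But this entire step is unnecessary: a product of abelian groups is abelian, and \emph{every} abelian group is $\mathfrak Z$-Noetherian by Bryant's theorem, quoted in the paper as Example \ref{abeliani e lineari sono ZNoet}(c) (every abelian-by-finite group is $\mathfrak Z$-Noetherian). So once you write $G\cong N\times A$ with $N=\prod_{i\in I_0}G_i$ a finite product of $\mathfrak Z$-Noetherian groups and $A=\prod_{i\in I\setminus I_0}G_i$ abelian, both factors are $\mathfrak Z$-Noetherian (the first by Fact \ref{ZNoet ereditaria}(b), the second by Example \ref{abeliani e lineari sono ZNoet}(c)), and one more application of Fact \ref{ZNoet ereditaria}(b) finishes the proof. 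Note in particular that the hypothesis ``$G_i$ is $\mathfrak Z$-Noetherian'' is vacuous for the abelian factors, so no passage to countable subgroups, and no worry about uncountable supports, is needed at this stage. With that one-line replacement of your final step, the proof is complete.
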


As a corollary of Theorem \ref{group:Z:Noeth:iff:every:countable:subgroup:is}(a), one obtains that every free group is $\mathfrak Z$-Noetherian (see also \cite{ChiswRemesl}, \cite{Guba}, and Example \ref{zariski topology on a free group}(a)). 


\subsection{Independent topologies, potential density and $\mathfrak M$-cofinite groups}\label{TY-story}

The following definition was given in \cite[Definition 2.13]{Zariski_linear}:

\begin{definition}\label{Def:Zar-cofin}\label{definition:Z:Noetherian:etc}
A group $G$ is called \emph{$\zar$-cofinite} (resp., \emph{$\Mar$-cofinite}) if $\mathfrak{Z}_{G} = \cof_G$ (resp., $\mathfrak{M}_{G}$ is cofinite).
\end{definition}

Obviously, finite groups are $\zar$-cofinite, while $\zar$-cofinite implies $\zar$-Noetherian.
The aim of the present paper is the study of the smaller class $\C\subseteq \nilpo$ of $\zar$-cofinite groups. 
A examples of groups that are $\zar$-Noetherian but not $\zar$-cofinite are provided by 
Example \ref{abeliani e lineari sono ZNoet} (we show in Corollary \ref{cor:example:not:zar:cofinite} that they are $\zar$-cofinite only if they are either finite or abelian), 
for further examples see Example \ref{zariski topology on a free group}).


 Since  ${\mathfrak Z}_G \leq {\mathfrak M}_G$, the class $\M$ of $\Mar$-cofinite groups is  contained in $\C$, and clearly $G \in \M$ precisely when all proper unconditionally closed sets of $G$ are finite. The abelian ${\mathfrak M}$-cofinite groups
were described by Tkachenko and Yaschenko: 

\begin{theorem}{\em \cite[Theorem 5.1]{TY}}\label{abelian:Z:cofinite}
In an abelian group $G$ all proper unconditionally closed subgroups of G are finite if and only if $G$ is almost torsion-free, or it has prime exponent.
\end{theorem}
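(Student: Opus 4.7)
The plan is to combine two ingredients already recalled in the excerpt: the equality $\mathfrak Z_G=\mathfrak M_G$ for every abelian group $G$ (Shakhmatov--Dikranjan), and the description given in Proposition~\ref{Abel:Mon} of the algebraic subsets of an abelian $G$ as built from cosets of the socles $G[n]$. The first lets one replace ``unconditionally closed'' by ``algebraic'' throughout, while the second reduces the problem to the behaviour of the subgroups $G[n]$. The observation on which both directions hinge is that every $G[n]$ is itself an unconditionally closed subgroup of $G$, since it is the kernel of the power map $x\mapsto nx$, which is continuous in every Hausdorff group topology on $G$.

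For the forward implication, the observation forces each $G[n]$ to be either finite or equal to $G$. If no $G[n]$ equals $G$, every socle is finite and $G$ is almost torsion-free by definition. Otherwise let $n\in\N_+$ be the smallest integer with $G[n]=G$, so that $G$ has exponent $n$; the task is to show that $n$ is prime. If not, write $n=ab$ with $1<a,b<n$; by minimality of $n$ we have $G[a]\neq G$, hence $G[a]$ is finite, so $aG\cong G/G[a]$ is infinite. Since $b\cdot(ay)=(ab)y=0$ for every $y\in G$, it follows that $aG\subseteq G[b]$, so $G[b]$ is infinite; by the observation this yields $G[b]=G$, contradicting the minimality of $n$.

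For the converse, the coset description shows that every proper algebraic subset of $G$ is a finite union $\bigcup_{i=1}^{k}(g_i+G[n_i])$ with each $G[n_i]\neq G$. If $G$ has prime exponent $p$, then $G[n_i]\neq G$ forces $p\nmid n_i$, so $G[n_i]=\{0\}$ and each coset is a singleton. If instead $G$ is almost torsion-free, each $G[n_i]$ is finite by hypothesis, so again each coset is finite. In either case every proper algebraic subset is finite, and the equality $\mathfrak Z_G=\mathfrak M_G$ then ensures that every proper unconditionally closed subset of $G$, and in particular every proper unconditionally closed subgroup, is finite.

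The main obstacle is the ``exponent must be prime'' step in the forward direction, where one has to exploit the interaction between the socles $G[a]$, $G[b]$ and the image $aG$ for a factorization $n=ab$; the rest of the argument is bookkeeping once the two recalled facts and the key kernel observation are in hand.
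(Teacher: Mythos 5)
Your proof is correct and follows essentially the same route as the paper: replace ``unconditionally closed'' by ``Zariski closed'' via $\mathfrak Z_G=\mathfrak M_G$ for abelian groups, use the coset-of-socle description of the algebraic sets, and run the dichotomy ``each $G[n]$ is finite or equal to $G$''; your $aG\subseteq G[b]$ argument for primality of the exponent is exactly the abelian specialization of the step the paper carries out (via fibers of the power map) in its proposition on $\mathfrak Z_{mon}$-cofinite groups. The only (harmless) omission is that the step ``$aG\cong G/G[a]$ is infinite'' tacitly assumes $G$ infinite, so you should first dispose of the trivial case of finite $G$, for which both sides of the equivalence hold automatically.
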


Since the Markov and the Zariski topology coincide in abelian groups, this theorem characterizes the abelian groups having cofinite Zariski topology. 

Even if the above observations (see also \S \ref{NoetherianGettho}) may fully justify the study of the $\zar$-cofinite and $\mar$-cofinite groups, an even stronger motivation comes 
from the problem of complementation of group topologies. More specifically, pairs of independent group topologies
(two non-discrete $T_1$-topologies $\tau_1, \tau_2$ on a set X are called {\em independent} if $\tau_1\cap \tau_2=\cof_X$ \cite{TY}).
Let $\I$ denote the class of groups admitting a  pair of independent group topologies. Obviously, $\I \subseteq \M$ 
 as $\tau_1\cap \tau_2 \geq {\mathfrak M}$ for every group $G$.  Yet these two classes do not coincide, 
 as $\I$ contains no countable groups (\cite[Theorem 2.3]{TY}),  while Theorem \ref{abelian:Z:cofinite} provides a wealth of countable abelian groups in $\M$. 
Nevertheless, under the assumption of MA (the Martin Axiom) one has:

\begin{theorem}\label{TY:Ind/Mar-cofin} {\em \cite[Corollary 5.4]{TY}} \ {\rm [MA]} An Abelian group $G$ of size $\mathfrak c$ belongs to $\I$ if and only if $G\in \M$. 
\end{theorem}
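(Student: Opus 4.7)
The forward implication $(\Rightarrow)$ needs no set-theoretic hypothesis. If $\tau_1,\tau_2$ witness $G\in\I$, then $\cof_G=\tau_1\cap\tau_2\ge\mathfrak M_G\ge\cof_G$, since $\mathfrak M_G$ lies below every Hausdorff group topology on $G$ and above $\cof_G$ by $T_1$-ness. Hence $\mathfrak M_G=\cof_G$, i.e., $G\in\M$.

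For $(\Leftarrow)$, assume [MA] and let $G$ be an abelian group in $\M$ with $|G|=\mathfrak c$. The plan is first to invoke Theorem~\ref{abelian:Z:cofinite} to reduce to two subcases: (a) $G$ almost torsion-free, or (b) $G$ of prime exponent $p$. In each case I would construct two Hausdorff non-discrete group topologies $\tau_1,\tau_2$ on $G$ with $\tau_1\cap\tau_2=\cof_G$ by building, through a transfinite recursion of length $\mathfrak c$, two symmetric neighbourhood bases $\{V_\alpha\}_{\alpha<\mathfrak c}$ and $\{W_\alpha\}_{\alpha<\mathfrak c}$ at $e$ that jointly meet four families of requirements: (i) the group-topology axioms ($V_\beta+V_\beta\subseteq V_\alpha$ for some $\beta$, and the analogue for $W$); (ii) Hausdorffness (each $g\neq e$ is missed by some $V_\alpha$ and by some $W_\alpha$); (iii) non-discreteness (every $V_\alpha$ and $W_\alpha$ is infinite); and the key (iv) independence condition, $V_\alpha\cup W_\beta$ cofinite for every pair $(\alpha,\beta)$. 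A short verification shows that (iv), together with $T_1$-ness of the two topologies, yields $\tau_1\cap\tau_2=\cof_G$: any set open in $\tau_1\cap\tau_2$ contains at each of its points a translate of some $V_\alpha\cup W_\beta$, which by (iv) is cofinite.

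[MA] enters as follows: one organises (i)--(iv) as $\mathfrak c$-many requirements and exhibits, in a ccc poset of finite partial conditions, a dense set of conditions fulfilling each individual requirement. A filter meeting all these ($<\mathfrak c$ in number) dense sets, provided by [MA], yields the desired bases.

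The main obstacle is the compatibility of (ii) and (iv): independence forces the complement $G\setminus V_\alpha$ to be almost contained in every $W_\beta$, which sits in tension with $\bigcap_\beta W_\beta=\{e\}$. This is precisely where the assumption $G\in\M$ is used. In case (b) the $\mathbb F_p$-vector space structure permits a splitting $G=H_1\oplus H_2$ with $|H_1|=|H_2|=\mathfrak c$, so $\tau_1$ can be concentrated on $H_1$ and $\tau_2$ on $H_2$, making the independence condition almost automatic. Case (a) is harder and would use the divisible hull of $G$ together with Bohr-type constructions to produce enough characters into $\T$ to support the combinatorial machinery. The almost-torsion-free or prime-exponent dichotomy is exactly what guarantees that these characters -- and the corresponding small-index symmetric subsets -- exist in sufficient abundance for [MA] to carry out the construction.
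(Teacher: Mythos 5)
Your forward implication is correct, and it is the same one-line observation the paper itself makes when noting $\I\subseteq\M$. Be aware, though, that the statement you are proving is quoted verbatim from Tkachenko--Yaschenko (\cite[Corollary 5.4]{TY}); the present paper gives no proof of its own, so the only question is whether your reverse implication stands on its own. As written, it does not. The most concrete problem is the application of MA: your conditions (ii) and (iv) constitute $\mathfrak c$-many requirements (one for each $g\neq e$, and one for each pair $(\alpha,\beta)$ with $\alpha,\beta<\mathfrak c$), whereas MA only yields a filter meeting \emph{fewer than} $\mathfrak c$ dense sets. A single appeal to MA ``to meet all these dense sets'' is therefore unavailable; one needs a transfinite recursion of length $\mathfrak c$ invoking MA at each stage against a small family of dense sets, and you have not set that up. Beyond this, the poset of finite partial conditions is never defined, ccc is never verified, and the step where the hypothesis $G\in\M$ is supposed to enter is deferred to ``Bohr-type constructions'' and ``enough characters'' --- which is precisely where all the work lies.

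Structurally, the argument in \cite{TY} (summarized in the paper's discussion of Question \ref{Ques:WCL:bis}) also differs from your symmetric two-bases scheme: one first fixes a second countable Hausdorff group topology $\tau_1$ on $G$ (possible since $|G|=\mathfrak c$), observes that any group topology independent of a second countable one is necessarily countably compact (\cite[Proposition 2.4]{TY}) --- this is the precise resolution of the tension you correctly identify between your conditions (ii) and (iv) --- and then uses MA to construct a countably compact group topology $\tau_2$ independent of $\tau_1$ (\cite[Theorem 5.2]{TY}). The hypothesis that $G\in\M$, equivalently that $G$ is almost torsion-free or of prime exponent by Theorem \ref{abelian:Z:cofinite}, is what guarantees that no proper infinite unconditionally closed set can survive in $\tau_1\cap\tau_2$. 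Your sketch names plausible ingredients but carries out none of them, so the reverse direction has a genuine gap.
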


Every potentially dense set is also $\mathfrak M$-dense and hence $\mathfrak Z$-dense. So a possible solution of Markov's problem on characterization of the potentially dense subsets of a group should involve $\mathfrak M$-density. 
By Theorem \ref{abelian:Z:cofinite}, every infinite subset of an almost torsion-free abelian group is ${\mathfrak M}$-dense.  
Let $\Po$ denote the class of groups where every infinite subset is potentially dense. Then $\Po \subseteq \M$ by what we said above. 
A partial inverse of this implication was proved in \cite[Theorem 3.10]{TY} as follows: if $G$ is an almost torsion-free abelian group 
(so $G\in \M$) and $|G|\leq \mathfrak c$, then $G \in \Po$.
 These authors asked whether the condition $|G|\leq \mathfrak c$ can be relaxed to $|G|\leq 2^\mathfrak c$ (\cite[Problem 6.5]{TY}). The necessity of 
the restraint $|G|\leq 2^\mathfrak c$ is due to the fact that if $G \in \Po$, then 
 its countably infinite subsets are potentially dense, so $G$ is separable in the Hausdorff group topology witnessing potential density
of any of these countably infinite subsets, so $|G|\leq 2^\mathfrak c$.

The above question of whether all abelian groups $G\in \M$ with $|G|\leq 2^\mathfrak c$ belong to $\Po$ 
was answered in a strongly positive way (in several directions) by Shakhmatov and the second named author \cite{Kronecker-Weyl}. First of all, they proved the following simultaneous realization theorem for the Zariski closure of countably many subsets: 

 \begin{theorem}\label{AdvM}
 {\rm \cite[Theorem 4.1]{Kronecker-Weyl}} Let $G$ be an abelian group with $|G|\leq 2^\mathfrak c$, and let $\mathcal X $ be a countable family of subsets of 
 $G$. Then there exists a (precompact) Hausdorff group topology $\tau$ on $G$ such that the $\tau$-closure of each $X\in \mathcal X $
coincides with its Zariski closure.
 \end{theorem}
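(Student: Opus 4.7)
The plan is to invoke Comfort--Ross duality, which identifies the precompact Hausdorff group topologies on an abelian group $G$ with the point-separating subgroups $H\leq \mathrm{Hom}(G,\T)$; the corresponding topology $\tau_H$ is the initial topology generated by the characters in $H$. Since every Hausdorff group topology refines $\mathfrak Z_G$, the inclusion $\overline X^{\tau_H}\subseteq \overline X^{\mathfrak Z_G}$ is automatic, so the task reduces to constructing $H$ that realises the reverse inclusion simultaneously for every $X\in \mathcal X$.

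The structural input is Proposition \ref{Abel:Mon}: in the abelian setting $\overline X^{\mathfrak Z_G}$ is a finite union $\bigcup_{i=1}^{k}(g_i+G[n_i])$ of cosets of socles, and each intersection $X_i:=X\cap (g_i+G[n_i])$ is Zariski-dense in its coset. A point $g\in g_i+G[n_i]$ lies in $\overline X^{\tau_H}$ precisely when, for every finite $F\subseteq H$ and every $\varepsilon>0$, some $x\in X_i$ satisfies $|\chi(x)-\chi(g)|<\varepsilon$ for all $\chi\in F$. The problem thus becomes: choose $H$ so that each finite character tuple $(\chi_1,\dots,\chi_n)$ drawn from $H$ sends every $X_i$ to a dense subset of $(\chi_1,\dots,\chi_n)(g_i+G[n_i])\subseteq \T^n$.

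The construction would run by transfinite induction of length $2^{\mathfrak c}$, enumerating (i) all pairs of distinct elements of $G$, and (ii) all tuples $(X,g,F,\varepsilon)$ relevant to the approximation, with $F$ a finite subset of the $H$ being built. Separating characters are produced routinely. The crux is a Kronecker--Weyl-style \emph{generic-choice lemma}: given finitely many already-committed characters and a new approximation task, the set of $\chi \in \mathrm{Hom}(G,\T)$ that spoils the task is small (e.g., meagre or Haar-null, viewing $\mathrm{Hom}(G,\T)$ as a compact subgroup of $\T^G$), so $\mathfrak c$-many suitable characters remain available for selection. The hypothesis $|G|\leq 2^{\mathfrak c}$ keeps the total number of requirements below $2^{\mathfrak c}$, which is what makes the bookkeeping fit.

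The main obstacle I anticipate is formalising this genericity in the face of the cumulative interaction between tasks: every character added at a later stage must not destroy a density already achieved, a condition on the joint image of each $X_i$ in $\T^{|F|}$ for ever-growing $F$. Handling this carefully is the core difficulty, and the cardinal bound $|G|\leq 2^{\mathfrak c}$ is precisely calibrated to make the count work.
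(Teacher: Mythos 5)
First, note that the paper does not prove Theorem \ref{AdvM} at all: it is imported verbatim from \cite[Theorem 4.1]{Kronecker-Weyl}, so there is no in-paper argument to compare yours against. Your proposal must therefore stand on its own, and as written it does not: it is a correct setup followed by an unproved key lemma. The framework you describe (Comfort--Ross correspondence between precompact Hausdorff topologies and point-separating subgroups $H\leq \mathrm{Hom}(G,\T)$; the automatic inclusion $\overline X^{\tau_H}\subseteq \overline X^{\mathfrak Z_G}$; the reduction, via the coset-of-socles description of Zariski closures, to making finite tuples of characters map each $X_i$ onto a dense subset of the closure of the image of $g_i+G[n_i]$; transfinite bookkeeping of $\leq 2^{\mathfrak c}$ requirements) is indeed the right skeleton. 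But the entire mathematical content of the theorem is concentrated in the step you label the ``generic-choice lemma'' and then explicitly defer: the claim that, given finitely many committed characters and the density conditions they already satisfy, the set of new characters $\chi$ spoiling any of the (newly created) conditions for the tuples $F\cup\{\chi\}$ is small. You offer no proof of this, and it is not a routine observation: adding a character never helps density, it only refines the topology, so every enlargement of $H$ creates genuinely new constraints that must be verified, for every $X_i$ simultaneously, against the closed subgroup of $\T^{|F|+1}$ generated by the images of $G[n_i]$.

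Concretely, the lemma splits into cases of very different character. For $G=\Z$ and a single character it is exactly Weyl's equidistribution theorem (for any infinite $X\subseteq\Z$, the set $\{n\alpha : n\in X\}$ is dense in $\T$ for almost every $\alpha$), which is where the ``Kronecker--Weyl'' in the title of \cite{Kronecker-Weyl} comes from; already the multi-character, multi-set version requires care. For bounded torsion parts the picture changes entirely: the image of $g_i+G[n_i]$ under a finite tuple of characters is a coset of a \emph{finite} subgroup of $\T^k$, ``density'' becomes surjectivity onto a finite set, and the measure/category heuristic must be replaced by a combinatorial argument exploiting the algebraic structure of $X_i$ inside $G[n_i]$ (e.g.\ extraction of independent families). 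Your sketch does not address the torsion case, the mixed case, or the preservation of all previously enumerated conditions under each of the up to $2^{\mathfrak c}$ later choices. These issues are precisely why the proof in \cite{Kronecker-Weyl} is a substantial paper rather than a page of bookkeeping; as it stands, your proposal identifies the difficulty accurately but does not overcome it.
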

 
 In particular, if $\mathcal X $ is a countable family of $\zar$-dense subsets of $G$, then their potential  density is simultaneously witnessed by a single Hausdorff group topology on $G$.
(Every $\zar$-dense subset of an abelian group contains a countable $\zar$-dense subset \cite{Articolone}, so it is worth considering only $\zar$-dense countable subsets.)
In particular, a countable subset $S$ of  an abelian group $G$ with  $|G|\leq 2^\mathfrak c$ is potentially dense if and only if $S$ is $\zar$-dense. In particular, one 
has the following immediate consequence of Theorem \ref{AdvM} providing a complete answer to \cite[Problem 6.5]{TY} (note that \cite[Problem 6.5]{TY} addresses
only almost torsion-free groups, not all groups in $ \M$, as described in Theorem \ref{abelian:Z:cofinite}): 

 \begin{corollary}\label{PvsM}
If $G\in \M$ is abelian, then $G \in \Po$ if and only if $|G|\leq 2^\mathfrak c$.  
 \end{corollary}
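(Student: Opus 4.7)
The plan is to treat the two directions of the biconditional separately, noting that the substantive content is absorbed into Theorem \ref{AdvM} and that only bookkeeping remains.

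For the forward implication ($G \in \Po$ $\Rightarrow$ $|G|\leq 2^{\mathfrak c}$), I would simply formalize the separability argument already sketched in the paragraph preceding the corollary: choose any countably infinite subset $S\subseteq G$; by hypothesis $S$ is potentially dense, hence dense in some Hausdorff group topology $\sigma$ on $G$. Then $(G,\sigma)$ is a separable Hausdorff space, and the standard cardinality bound for separable Hausdorff spaces forces $|G|\leq 2^{\mathfrak c}$.

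For the converse, assume $G\in \M$ is abelian with $|G|\leq 2^{\mathfrak c}$, and let $X\subseteq G$ be an arbitrary infinite subset. The key observation is that since $G$ is abelian, $\mathfrak Z_G = \mathfrak M_G$ (the Dikranjan--Shakhmatov equality cited in \S 1.1), so the hypothesis $G\in\M$ yields $\mathfrak Z_G = \cof_G$. Consequently $X$ is $\mathfrak Z$-dense in $G$, because the only $\cof_G$-closed set containing an infinite set is $G$ itself. I would then apply Theorem \ref{AdvM} to the (trivially countable) family $\mathcal X = \{X\}$ to obtain a Hausdorff group topology $\tau$ on $G$ such that the $\tau$-closure of $X$ coincides with its Zariski closure, namely $G$. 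Thus $X$ is $\tau$-dense, hence potentially dense; since $X$ was arbitrary, $G\in \Po$.

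There is no genuine obstacle in the argument: the corollary is essentially a clean translation, in the range $|G|\leq 2^{\mathfrak c}$, between $\mathfrak Z$-density and potential density. The heavy lifting---simultaneous realization of Zariski closures by a single Hausdorff group topology---is carried out in Theorem \ref{AdvM}, while the link to $\M$-cofiniteness rests on the Tkachenko--Yaschenko description of abelian groups in $\M$ (Theorem \ref{abelian:Z:cofinite}) and the equality $\mathfrak Z_G = \mathfrak M_G$ in the abelian case.
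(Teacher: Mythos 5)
Your proof is correct and follows essentially the same route the paper intends: the necessity of $|G|\leq 2^{\mathfrak c}$ is exactly the separability argument sketched in the paragraph before the corollary, and the sufficiency is the application of Theorem \ref{AdvM} to a single infinite (hence $\mathfrak Z$-dense, since $\mathfrak Z_G=\mathfrak M_G=\cof_G$) subset. The only cosmetic remark is that the appeal to Theorem \ref{abelian:Z:cofinite} at the end is not needed, since the equality $\mathfrak Z_G=\mathfrak M_G=\cof_G$ is all the argument uses.
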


Theorem \ref{AdvM} completely resolves Markov's problem on potential density of countable subsets of abelian groups. For the sake of completeness 
we give now a brief account on the remaining part of the problem concerning potential density of uncountable subsets of abelian groups. 
In an abelian group $G$ one has the following immediate necessary condition for potential density of a subset $S \subseteq G$ (see \cite[Corollary 3.2]{DS_HMP}): 
\begin{equation}\label{necessary:condition:equation}
\mbox{$|nG|\le 2^{2^{|nS|}}$  \ for all $n\in\N\setminus\{0\}$.}
\end{equation}

A reinforced version of (\ref{necessary:condition:equation}) turned out to be a sufficient condition for potential density of an uncountable subset $S$: 

\begin{theorem}
\label{sufficient:condition:for:potential:density} {\rm \cite[Theorem 3.4]{DS_HMP}} If $S$ is an uncountable subset of an abelian group $G$ such that $|G|\le 2^{2^{|nS|}}$  for all $n\in\N\setminus\{0\}$, then  there exists a (precompact) Hausdorff group topology $\tau$ on $G$  such that $S$ is dense in $(G,\tau)$. 
\end{theorem}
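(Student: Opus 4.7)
The plan is to realize the desired $\tau$ as the initial topology on $G$ induced by a suitably chosen point-separating family of characters $H \subseteq \mathrm{Hom}(G,\T)$. Under the classical duality between precompact Hausdorff group topologies on abelian groups and point-separating subgroups of the dual, such a $\tau$ is automatically precompact Hausdorff, and $S$ is dense in $(G,\tau)$ precisely when for every finite $F \subseteq H$ the diagonal image $\pi_F(S)$ is dense in the closure of $\pi_F(G)$ in $\T^F$, where $\pi_F \colon G \to \T^F$ is the evaluation map.

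The first step is to reduce the density condition to a concrete statement about $S$ one character at a time. For $\chi \in \mathrm{Hom}(G,\T)$ the image $\chi(G)$ is either finite cyclic, in which case $\chi$ factors through some $G/nG$ and the density condition becomes $\chi(nS) = \chi(nG)$ for the relevant $n$, or dense in $\T$, in which case density of $\chi(S)$ is a Kronecker--Weyl type equidistribution statement. Since closed subgroups of $\T^k$ decompose as a product of a torus and a finite abelian group, unwinding linear combinations of the characters in $F$ reduces the finite-product version to requiring, for every $n \in \N_+$ and every $\chi$ to be used, that $\chi(nS)$ is dense in $\overline{\chi(nG)}$.

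The heart of the proof is then a counting argument calibrated by the hypothesis $|G| \le 2^{2^{|nS|}}$. For each $n \in \N_+$ let $H_n \subseteq \mathrm{Hom}(G,\T)$ be the set of characters witnessing the density of $\chi(nS)$ in $\overline{\chi(nG)}$. A Baire category or measure-theoretic argument in the compact group $\mathrm{Hom}(G,\T)$, combined with the Hewitt--Marczewski--Pondiczery theorem (which yields density at most $|S|$ for $\T^{2^{|S|}}$), should produce, under the hypothesis, a point-separating subfamily inside $\bigcap_{n \in \N_+} H_n$ of size at most $2^{|S|}$. Equivalently, enough ``good'' characters exist to separate every point of $G$ without spoiling the density of any $nS$.

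I would then carry out the construction as a transfinite recursion of length $|G|$: at stage $\alpha$, adjoin a character drawn from $\bigcap_n H_n$ that separates the next still-unseparated pair of points of $G$. The main obstacle I expect is exactly this simultaneity --- each new character must preserve the density requirement for $nS$ at \emph{every} $n$ and for every product with previously chosen characters, and failure at a single such requirement destroys potential density. The cardinality hypothesis is tailored so that, even in the presence of substantial torsion (where each $G[n]$ constrains how $\chi$ restricts to $nG$), the family of good characters strictly dominates the number of constraints at every stage; once $H$ has been assembled, density of $S$ in $(G,\tau)$ follows immediately from the criterion recalled at the outset.
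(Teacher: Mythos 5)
This theorem is quoted in the paper from \cite[Theorem 3.4]{DS_HMP} without proof, so your proposal can only be measured against the argument in that reference. Your opening framework is the right one and matches it: a precompact Hausdorff group topology is the initial topology of a point-separating family $H\subseteq\mathrm{Hom}(G,\T)$, and $S$ is dense iff $\pi_F(S)$ is dense in $\overline{\pi_F(G)}\subseteq\T^F$ for every finite $F\subseteq H$. But the step you call the reduction ``one character at a time'' is false, and it is the load-bearing step. Density of a subset of a compact abelian group is \emph{not} detected by its images under individual characters (nor under all integer combinations of the characters in $F$). Concretely, let $X=(\T\times\{0\})\cup(\{0\}\times\T)\subseteq\T^2$: every nontrivial character $(x,y)\mapsto mx+ny$ maps $X$ onto $\T$ (use the first circle if $m\ne0$, the second if $n\ne0$), yet $X$ is closed, proper and nowhere dense in $\T^2$. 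So the condition ``$\chi(nS)$ dense in $\overline{\chi(nG)}$ for every relevant $\chi$ and $n$'' does not imply density of $\pi_F(S)$ in $\overline{\pi_F(G)}$ for $|F|\ge 2$, and everything downstream of this reduction is unsupported.

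The same problem resurfaces in your transfinite recursion: at stage $\alpha$ the new character must preserve \emph{joint} density with every finite subfamily of the up to $2^{2^{|S|}}$ characters already chosen, which is not a condition of the form ``lie in $\bigcap_n H_n$'' for sets $H_n$ defined one character at a time, and a Baire-category or measure argument in $\mathrm{Hom}(G,\T)$ (which in any case handles only countably many constraints) is not shown to apply to it. The actual proof in \cite{DS_HMP} is organized in the opposite direction: one uses the Hewitt--Marczewski--Pondiczery theorem to produce a dense subset $D$ of size $|nS|$ inside the relevant closed subgroup of $\T^{2^{2^{|nS|}}}$, and then constructs the homomorphism $G\to\T^{2^{2^{|S|}}}$ \emph{globally}, using algebraic extension lemmas (handling the subgroups $nG$ and the torsion structure of $G$) to arrange that the image of $nS$ covers such a dense set for every $n$ simultaneously. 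Those extension lemmas are the real content of the theorem and are exactly what your sketch leaves out; as written, the proposal does not constitute a proof.
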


Since the gap between the above sufficient condition and the necessary condition (\ref{necessary:condition:equation})
completely disappears for groups satisfying $|nG|=|G|$ for every $n\in\N\setminus\{0\}$, for such groups the above theorem becomes a necessary and sufficient condition for potential density of an uncountable subset $S$ (\cite[Corollary 3.6]{DS_HMP}). Let us note that among the groups 
satisfying $|nG|=|G|$ for every integer $n\ge 1$ are all  divisible groups as well as the groups $G$ such that $|t(G)|<|G|$ (in particular, all
almost torsion-free groups). Actually, for groups $G$ with $|t(G)|<|G|$ one has $|nS| = |S|$ for all $n\in\N\setminus\{0\}$, so the necessary and sufficient condition
becomes $|G|\le 2^{2^{|S|}}$, which is obviously necessary, since imposed by the density of $S$ in any Hausdorff topology on $G$.  

\medskip 

This wealth of results in the abelian case leave completely open in the non-abelian context 
the above mentioned problems (potential density, pairs of independent topologies, etc.). For example: 

 \begin{question}\label{Q1}
Does Theorem \ref{abelian:Z:cofinite} remain true in the non-abelian case? 
\end{question}

 \begin{question}\label{Qx}
Do there exist non-abelian groups with a pair of independent topologies? 
\end{question}

 \begin{question}\label{Q2}
Do there exist infinite non-abelian groups in the class $\Po$? In other words, does every infinite non-abelian group of size $\leq 2^\cont$
possess  an infinite non-potentially-dense subset? 
\end{question}

Since $\Po\subseteq \M\subseteq \C$, the answer to Questions \ref{Qx} and \ref{Q2} would be negative if the larger class $\M$ (or even $\C$)
contains no infinite non-abelian groups: 

 \begin{question}\label{Q3}
Do there exist infinite non-abelian groups in the class $\M$? (In other words, does every infinite non-abelian group
possess a proper infinite  unconditionally closed set?)  What about countable groups?  
\end{question}

A negative answer to the first part of this question implies a negative answer to Questions \ref{Qx} and \ref{Q2}. 
Consequenly. 
a group $G$ with a pair of independent topologies is necessarily uncountable and abelian,
hence in Theorem \ref{TY:Ind/Mar-cofin} the assumption of ``abelian" can be omitted (as it will follow from $G \in \M$). 
Similarly, also 
Corollary \ref{PvsM} becomes true without the assumption of ``abelian". We conjecture that 
the answer is negative (at least in the countable case, see Conjecture \ref{MainConj}, the final part of \S \ref{sec:PrincipalResults} and Question \ref{Mcount}).
 
\section{Main results}
  
Inspired by the facts described in \S\S \ref{NoetherianGettho} and \ref{TY-story} and Questions \ref{Q1}--\ref{Q3}, we study in this paper the class $\C$ of groups having cofinite Zariski topology (taking into account that the Zariski topology and the Markov topology coincide for large classes of groups: countable, abelian, free, etc.). We provide an alternative short proof of Theorem \ref{abelian:Z:cofinite} (see Proposition \ref{Exa:Z:cofinite:abel}) and we answer negatively Question \ref{Q1} by showing that Theorem \ref{abelian:Z:cofinite} fails even for nilpotent groups of class 2, see \S \ref{sec:PrincipalResults} and Remark \ref{counterExTY}(c) for more detail. 

In \S \ref{sec:partial zariski} we introduce four partial Zariski topologies by restricting the realm of words appearing in
Definition \ref{Markov's:definition}(a), of those two play a key role 
in a better understanding of the class $\C$. Moreover, 
one of them is used in Theorem \ref{abelian:Z:cofinite} to provide a generalization beyond the abelian context (see Theorem \ref{Super_Main}). In \S \ref{sec:PrincipalResults} the principal results of the paper are collected.  
\subsection{Partial Zariski topologies}\label{sec:partial zariski}

Let $G$ be a group. Taking $x$ as a symbol for a variable, the free product $G[x] = G* \langle x \rangle$ of $G$ and the infinite cyclic group $\langle x \rangle$ 
is \emph{the group of words with coefficients in $G$}. A \emph{word}  $w \in G[x]$  in $G$ has the form
\begin{equation*}              
  w =  g_1 x^{\varepsilon_1} g_2 x^{\varepsilon_2} \cdots g_n x^{\varepsilon_n},
\end{equation*}
for an integer $n \geq 0$, elements $g_1,\ldots, g_n\in G$ and $\varepsilon_1,\ldots,\varepsilon_n \in \{-1,1\}$.

Every word $w \in G[x]$ induces an evaluation function $f_w : G \to G$ defined as follows: for $g \in G$, its image $f_w(g)$ is obtained replacing $x$ in $w$ with $g$. This is why we also write $w(g)$ for $f_w(g)$ sometimes.

 For such $w \in G[x]$, we consider the subset of $G$ defined by
\begin{equation}             \label{first:elem:alg:subs}
E_w^G=f_w ^{-1} (e_G)= \{ g \in G: g_1 g^{\varepsilon_1} g_2 g^{\varepsilon_2} \cdots g_n g^{\varepsilon_n} = e_G\},
\end{equation}
sometimes simply written as $E_w$ in place of $E_w^G$.

If $\mathcal{W} \subseteq G[x]$ is a set of words, we consider the  initial topology $\mathfrak T_\mathcal{W}$ on $G$ of the family $ \{ f_w \mid w \in \mathcal{W} \}$  when considered as functions $G \rightarrow (G, \cof_G)$. A subbase of the closed sets of $\mathfrak T_\mathcal{W}$ is the family of elementary algebraic subsets 
$$\mathcal E( \mathcal{W} ) = \{ f_w ^{-1} (g) \mid w \in \mathcal{W}, g \in G \} = \{ f_{ g^{-1}w } ^{-1} ( e_G ) = E_{ g^{-1}w }\mid w \in \mathcal{W}, g \in G \} \subseteq \mathbb E_G
.$$ 
Such a topology is called a \emph{partial Zariski topology}, as obviously $\mathfrak T_\mathcal{W} \leq \mathfrak Z_G$.

Here we define the four basic examples of partial Zariski topologies used throughout the paper. 

\begin{example}\label{cofin:is:partial:Zariski} The first and simplest example is obtained as follows. 
For every $g \in G$, one has $E_{gx} = \{ g^{-1} \}$. Then the cofinite topology is a partial Zariski topology, as $\cof_G = \mathfrak T_{ \{ gx \mid g \in G \} }$.
\end{example}

\begin{example}\label{mono:topo}[The monomial topology] A word of the form $w = g x^m$, for $g \in G$ and $m\in \Z$, is called a \emph{monomial}. If $\mathcal{M} = \{ g x^n \mid g \in G, \ n \in \N \} \subseteq G[x]$ is the family of the monomials, then we denote by ${\mathfrak Z}_{mon,G}$, or simply by $\pz_{ mon }$, the topology $\mathfrak T_\mathcal{M}$. If $G$ is abelian, then ${\mathfrak Z}_{mon,G} = \zar_G$ (Proposition \ref{Abel:Mon}). 
\end{example}

\begin{example}\label{cen:topo}\label{sec:centralizer:top}[The centralizer topology]
For a non-trivial group $G$, and an element $g \in G$, consider the commutator word $c_g = gxg^{-1}x^{-1} \in G[x]$. The centralizer of $g$ is the subgroup of $G$ defined by $C_G(g) = \{ x \in G \mid xg = gx \} = E_{ c_g }$.

Consider  the family $\mathcal{W}_{cen}= \{x a x^{-1} \mid a \in G\}  = \{ a c_{a^{-1}} \mid a \in G\}  \subseteq G[x]$. For $t\in G$ the set
$f_{x a x^{-1}}  ^{-1}(t)$ is either empty (when $t$ is not a conjugate of $a$) or coincides with $h C_G(a)$ for an appropriate $h\in G$. Since $h C_G(a) = C_G(h a h^{-1}) h$, we have 
\begin{equation}\label{W_cen}
\mathcal E(\mathcal{W}_{cen}) =\{\emptyset\}\cup  \{ h C_G (g) \mid h, g \in G \} = \{\emptyset\}\cup  \{C_G (g')h \mid h, g' \in G \}.
\end{equation}
We call the partial Zariski topology  $\mathfrak T_{ \mathcal{W}_{cen}}$ {\em centralizer topology} on $G$ and we denote it by $\mathfrak C_G$. The pair $(G,\mathfrak C_G)$ is a quasi-topological group, see Proposition \ref{centralizer:top:productive} for further properties of  $(G,\mathfrak C_G)$.
\end{example}

\begin{example}
Since the centralizer topology $\mathfrak C_G$ is not $T_1$ in general (see Lemma \ref{centralizer:top:productive}) we consider also the topology $\mathfrak C_G' = \mathfrak C_G \vee \cof_G$,
where $\vee$ denotes the supremum taken in the lattice $\mathfrak T(G)$ of all topologies on $G$. Note that also $\mathfrak C_G' $ is a partial Zariski topology, as for example 
\[
\mathfrak C_G' = \mathfrak{T}_{ \mathcal{W}_{cen}} \vee \mathfrak T_{ \{ gx \mid g \in G \} } = \mathfrak{T}_{ \mathcal{W}_{cen} \cup \{ gx \mid g \in G \} }.
\]

Clearly, when $\zar_G = \cof_G$ one also has 
\begin{equation}\label{c'=mon=cof}
\mathfrak{C}'_G = \cof_G \text{ and } \mathfrak{Z}_{mon,G} = \cof_G. 
\end{equation}
We are particularly interested in the topologies $\mathfrak{C}'_G$ and $\mathfrak{Z}_{mon,G}$ because all the properties on the structure of $G$ we are able to derive come from the assumption that \eqref{c'=mon=cof} holds. 
\end{example}

The following diagram shows how some of the topologies we study are related, in the lattice of topologies on a group $G$.

\begin{center}
\begin{tikzpicture}[scale=0.67]
\node(Z) at(0,0)     {$\mathfrak{Z}_G$};
\node(sum)    at (0,-2)     {$\mathfrak{C}_G'\vee \zar_{mon,G}$};
\node(C')  at (-2,-4)     {$\mathfrak{C}'_G$};
\node(C) at (-4,-6)  {$\mathfrak{C}_G$};
\node(monq)    at (2,-4)    {$\mathfrak{Z}_{mon,G}$};
\node(intq) at (0,-6) {$\mathfrak{C}'_G \cap \mathfrak{Z}_{mon,G}$};
\node(cof) at (0,-8) {$\cof_G$};
\draw(Z)  -- (sum) -- (C') -- (intq) -- (monq) -- (sum);
\draw(intq) -- (cof);
\draw(C)   -- (C');
\end{tikzpicture}
\end{center}

We consider groups $G$ with $\mathfrak{Z}_{mon,G} = \cof_G$ in \S\ref{sec:mon-cofinite} (see for example Proposition \ref{mon:cof:atr:or:prime:exp}), and groups $G$ with $\mathfrak{C}'_G = \cof_G$ in \S\ref{sec:structure:C'cofinite} (see for example Theorem \ref{centr:cof:bounded}). We finally consider groups $G$ with $\zar_G = \cof_G$ in \S\ref{zar-cofinite groups} (see also Corollary \ref{Main:prime:exp}). 


\subsection{Principal results}\label{sec:PrincipalResults}

Following Definition \ref{Def:Zar-cofin}, we call \emph{$\mathfrak{C}'$-cofinite} (resp., \emph{${\mathfrak Z}_{mon}$-cofinite}) a group $G$ such that $\mathfrak{C}'_G = \cof_G$ (resp., ${\mathfrak Z}_{mon,G}= \cof_G$).

Since both  $\mathfrak{C}'_G$ and ${\mathfrak Z}_{mon,G}$ are coarser than ${\mathfrak Z}_G$, the class 
$\C_{cen}$ of {$\mathfrak{C}'$-cofinite groups $G$ 
and the class $\C_{mon}$ of ${\mathfrak Z}_{mon}$-cofinite groups both
contain $\C$ and allow us to obtain useful information
about $\C$ (although $\C_{mon} \cap \C_{cen}$ properly contains $\C$, see Example \ref{ex:Tarski:monsters}). 

According to Remark \ref{counterExTY}(c), Theorem \ref{abelian:Z:cofinite} fails in the non-abelian case.
Since ${\mathfrak Z}_{mon} = {\mathfrak Z}_G$ for any abelian group $G$, in the chase of a correct non-abelian version of 
Theorem \ref{abelian:Z:cofinite} we replace, as a first step, $ {\mathfrak Z}_G$ by ${\mathfrak Z}_{mon}$. 
 For the second step we need to recall that a group $G$ is said to \emph{satisfy the cancellation law} if $x^n = y^n$ implies $x = y$, for every $n \in \N_+$ and $x,y \in G$. In particular, such a group is torsion-free. Here we propose a natural weaker property that perfectly fits for our aim of description of the ${\mathfrak Z}_{mon}$-cofinite groups: 

\begin{definition}\label{WCL:def} A group $G$ is said to satisfy the {\em Weak Cancellation Law} (shortly, {\em WCL}) if for every 
$ n > 0$ the map $x \mapsto x^n$ in $G$  is finitely-many-to-one.
\end{definition}

This notion turns out to be quite useful for complete understanding of Theorem \ref{abelian:Z:cofinite} for arbitrary groups, in conjunction with the 
use of ${\mathfrak Z}_{mon}$ in place of  $ {\mathfrak Z}_G$.  
Indeed, in the next theorem we obtain a complete characterization of the class $\C_{mon}$. On the other hand, this is also the desired generalization of Theorem \ref{abelian:Z:cofinite} for arbitrary groups (since when $G$ is abelian, 
${\mathfrak Z}_{mon} = {\mathfrak Z}_G =  {\mathfrak M}_G$ and WLC coincides with ``almost torsion-free").  

\begin{theorem}\label{Super_Main} 
An infinite group $G$ is ${\mathfrak Z}_{mon}$-cofinite if and only if either $G$ has prime exponent, or $G$ is WCL. 
\end{theorem}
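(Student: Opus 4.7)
The plan is to reduce the theorem to an elementary statement about \emph{power fibers} $P_n(a) := \{x \in G : x^n = a\}$ for $n \in \N$ and $a \in G$. Indeed, the subbase of closed sets of $\mathfrak{Z}_{mon,G}$ consists of the sets $f_{gx^n}^{-1}(h) = \{x \in G : x^n = g^{-1}h\} = P_n(g^{-1}h)$, so a first lemma will record that $\mathfrak{Z}_{mon,G} = \cof_G$ if and only if \emph{every} $P_n(a)$ is either finite or equal to $G$. This equivalence is immediate: a finite union of subbase closed sets that are each finite or $G$ is itself finite or $G$, and arbitrary intersections of such sets are then finite or $G$, while the reverse implication is clear because a subbase closed set is itself closed.

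For the ``if'' direction, if $G$ has prime exponent $p$ and $p \mid n$, then $P_n(e) = G$ and $P_n(a) = \emptyset$ for $a \neq e$; if $\gcd(n,p)=1$, pick $m$ with $nm \equiv 1 \pmod p$, and observe that the powers of a single element commute and each element has order dividing $p$, so $x^n = a$ implies $x = x^{nm} = (x^n)^m = a^m$, giving $|P_n(a)| \leq 1$. If instead $G$ is WCL, then by definition $P_n(a)$ is finite for every $n \geq 1$, while $P_0(a) \in \{G, \emptyset\}$.

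For the ``only if'' direction, assume $\mathfrak{Z}_{mon,G} = \cof_G$ and that $G$ is not WCL. Some $P_n(a)$ with $n \geq 1$ is then infinite, hence equals $G$ by the preceding characterization; plugging in $x = e$ yields $a = e$, so $x^n = e$ for all $x$ and $G$ has finite exponent $N \geq 2$. It remains to show $N$ is prime. Suppose otherwise and write $N = km$ with $k, m \geq 2$ (e.g., $k$ the smallest prime factor of $N$). Consider the $m$-th power map $\sigma : G \to G$, $\sigma(x) = x^m$; its image lies in $P_k(e)$ because $\sigma(x)^k = x^{N} = e$. Since $G$ is infinite, one of two things must happen: either $\sigma(G)$ is infinite, in which case $P_k(e)$ is infinite and hence equals $G$, contradicting $N > k$; or some fiber $\sigma^{-1}(b) = P_m(b)$ is infinite, hence equals $G$, forcing $b = e^m = e$ and the exponent to divide $m < N$, again a contradiction. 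Thus $N$ is prime, concluding the proof.

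The only delicate point is the final dichotomy, and the crux is that non-primality of the exponent provides the factorization $N = km$ with both factors strictly smaller than $N$, which is exactly what allows the image-versus-fibers pigeonhole on $\sigma$ to produce a violation of $\mathfrak{Z}_{mon}$-cofiniteness in each subcase. Everything else is bookkeeping on the subbase of $\mathfrak{Z}_{mon,G}$ and the elementary structure of the power map; I do not anticipate any genuine obstruction.
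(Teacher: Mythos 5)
Your proof is correct and follows essentially the same route as the paper's: the reduction to the statement that every power fiber $P_n(a)$ is finite or all of $G$ is the paper's Proposition~\ref{mon:cof:characterization}, and both the inverse-mod-$p$ argument for the prime-exponent case and the pigeonhole on the $m$-th power map ruling out a composite exponent appear in the proof of Proposition~\ref{mon:cof:atr:or:prime:exp}. The only difference is organizational: you argue directly from ``not WCL'' rather than factoring through the intermediate notion of ``almost torsion-free'' as in Propositions~\ref{torsion:almosttorsionfree:is:mon-cofinite} and~\ref{corollary:chernikov}, which streamlines the exposition without changing the underlying mathematics.
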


In \S \ref{sec:structure:C'cofinite} we focus on infinite non-abelian groups $G\in \C_{cen}$, showing that these groups have no infinite subgroups that are either 
locally solvable or locally finite (Corollary \ref{no:locally:finite}). Moreover, if $G\in \C_{cen}\cap \C_{mon}$, then $G$ has a prime exponent $p \geq 5$ (Theorem \ref{centr:cof:bounded}). 

The productivity of the Zariski topology on groups was studied in \cite{ProductivityZariski}. 
Following that spirit as well as that of Fact \ref{ZNoet ereditaria}, we show in \S \ref{StabProp} that $\C$, $\C_{cen}$, and $\C_{mon}$ are stable under taking subgroups and  taking quotients with respect to finite normal subgroups 
(Corollaries \ref{quotient:zar:cofinite} and \ref{quotient:Mon:cofin} and Lemma \ref{central quotient of C'cof are C'cof}). 
Finite productivity is not available in general, but the situtation can be completely described:  we prove (Theorems \ref{prop :dem} and  \ref{prod:Mon}) that if $G=G_1\times G_2$ is an infinite group, then 
\begin{itemize}
\item[(a)] in case $G$ is non-abelian, then $G\in \C_{cen}$ (resp, $\C$) if and only if exactly one of the groups, say $G_2$, is finite abelian and $G_1\in \C_{cen}$ (resp, $\C$)
 is  infinite non-abelian. 
\item[(b)] $G\in \C_{mon}$ if and only if either both $G_1,G_2$ are WCL, or there exists a prime $p$ such that both $G_1,G_2$ have exponent $p$. 
\end{itemize}

The restriction ``non-abelian" in item (a) is irrelevant, since when $G$ is abelian, then $\zar_G= \zar_{mon,G}$, so 
$G \in \C$ if and only if $G\in \C_{mon}$ (so item (b) applies), while $G\in \C_{cen}$ is vacuously satisfied.  

The following theorem shows us that we can study just countable groups in $\C$, $\C_{cen}$ and $\C_{mon}$.

\begin{theorem}\label{cof:elem:model_intro} An infinite group $G$ is $\mathfrak Z$-cofinite (resp. $\mathfrak{C}'$-cofinite, $\zar_{mon}$-cofinite) if and only if every countable subgroup of $G$ is $\mathfrak Z$-cofinite (resp. $\mathfrak{C}'$-cofinite, $\zar_{mon}$-cofinite).
\end{theorem}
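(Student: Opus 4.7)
The plan is to handle the three equivalences in parallel, leveraging the common feature that $\mathfrak Z_G$, $\mathfrak C'_G$ and $\mathfrak Z_{mon,G}$ are all partial Zariski topologies whose subbasic closed sets are parametrized by finitely many elements of $G$. The argument splits into a routine hereditary step and a reduction-plus-construction step.

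For the forward implication, let $\tau$ stand for any of $\mathfrak Z, \mathfrak C', \mathfrak Z_{mon}$ and let $H \leq G$. A direct inspection of the relevant subbases shows $\tau_H \leq \tau_G\restriction_H$: for instance, $E^H_w = E^G_w \cap H$ whenever $w \in H[x]$, and $h C_H(g) = (h C_G(g)) \cap H$ whenever $g, h \in H$, while in all cases $\cof_H = \cof_G\restriction_H$. Since $\cof_H \leq \tau_H$ always holds, if $\tau_G = \cof_G$ then $\tau_H$ is sandwiched between $\cof_H$ and $\cof_G\restriction_H = \cof_H$, forcing $\tau_H = \cof_H$.

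For the converse I would argue by contrapositive. Assuming $G$ is not $\tau$-cofinite, the key step is to produce a \emph{single} subbasic closed set of $\tau_G$ which is already infinite and proper. Starting from any infinite proper $\tau_G$-closed $C = \bigcap_i F_i$ with each $F_i$ a finite union of subbasics, some $F_i$ must be proper (else $C = G$) and infinite (since $C \subseteq F_i$); in such a proper finite union no summand can equal $G$, so every summand is proper, and at least one is infinite. This yields an infinite proper subbasic $E$ of $\tau_G$, determined by finitely many parameters from $G$: the coefficients of a word $w \in G[x]$ for $\mathfrak Z$; an element $s \in G$ and an integer $n\geq 1$ (for the equation $x^n = s$) for $\mathfrak Z_{mon}$; or a pair $g,h \in G$ with $C_G(g) h$ infinite and proper for $\mathfrak C'$ (singleton subbasics are finite and thus discarded).

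With these parameters in hand, one picks countably many distinct $a_1, a_2, \dots \in E$ and one element $b \in G \setminus E$, and lets $H$ be the subgroup of $G$ generated by the parameters together with all the $a_i$ and with $b$. Then $H$ is countably infinite, and by the subbase formulas above the subbasic of $\tau_H$ built from the very same parameters equals $E \cap H$: it contains all $a_i$ so it is infinite, and omits $b$ so it is proper, hence $H$ is not $\tau$-cofinite. The main obstacle I anticipate is the careful bookkeeping for the centralizer case, where one must exclude the trivial subbasics that equal $G$ (those with $g \in Z(G)$) before extracting an infinite proper one; once the reduction to a single subbasic is available, the countable-witness construction is entirely uniform across the three topologies.
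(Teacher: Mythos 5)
Your argument is correct and is essentially the paper's own proof: the paper likewise reduces non‑cofiniteness to a single infinite proper subbasic set (an elementary algebraic set $E_w$, a centralizer $C_G(g)$, or a root set $E_{gx^n}$) and then forms the countable subgroup generated by the finitely many parameters, countably many points of that set, and one point outside it (Lemma \ref{Lem:countableSub}). The only cosmetic difference is that you re-derive the hereditary direction via restriction of the subbases, where the paper cites its earlier hereditarity results.
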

		
The following theorem shows that in studying infinite non-abelian groups in the class $\C$, one can assume several additional properties. 

\begin{theorem}\label{additional:properties} If there exists an infinite non-abelian group $G\in \C$, then there exists an infinite group $K\in \C$ 
such that:
\begin{itemize}
   \item[(a)] $K$ is finitely generated and has prime exponent, 
   \item[(b)] $K$ is perfect, center-free and indecomposable, 
   \item[(c)] $K$ has no proper subgroup of finite index,
   \item[(d)] $K$ has no proper finite normal subgroups.
\end{itemize}
\end{theorem}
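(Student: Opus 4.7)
The plan is to construct $K$ as a subquotient of $G$ in three steps: first, pass to a finitely generated infinite subgroup $H$ of $G$; second, replace $H$ by its finite residual $R := \bigcap\{N \leq H : [H:N] < \infty\}$; third, quotient $R$ by the subgroup $F$ generated by all finite normal subgroups of $R$. Properties (a), (c), (d) will be built in by this construction, and (b) will then follow automatically from the structural results already collected in the paper.

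Since $G \in \C \subseteq \C_{cen} \cap \C_{mon}$ is infinite and non-abelian, Theorem \ref{centr:cof:bounded} forces $G$ to have prime exponent $p \geq 5$, and Corollary \ref{no:locally:finite} forbids infinite locally finite subgroups in $G$. In particular $G$ is not locally finite, so I would pick a finitely generated infinite subgroup $H \leq G$; by stability of $\C$ under subgroups $H \in \C$, and $H$ cannot be abelian, since a finitely generated abelian group of prime exponent is finite.

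For the main step, I would apply Zelmanov's positive solution of the restricted Burnside problem to the finitely generated residually finite group $H/R$ of exponent $p$: this forces $H/R$ to be finite, hence $R$ has finite index in $H$, is finitely generated by Schreier's theorem and infinite, inherits exponent $p$ and membership in $\C$, and is non-abelian by the same argument as for $H$. Property (c) then holds automatically for $R$: any finite-index $R' \leq R$ is finite-index in $H$, so $R \leq R'$ by definition of $R$ and hence $R' = R$. Now let $F$ be the subgroup of $R$ generated by all its finite normal subgroups; for any finite normal subgroup $F'$ of $R$, the centralizer $C_R(F')$ has finite index in $R$ (since $R/C_R(F')$ embeds in the finite group $\aut(F')$), so (c) forces $C_R(F') = R$, i.e.\ $F' \leq Z(R)$. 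Consequently $F$ is a directed union of finite abelian subgroups, therefore locally finite, and Corollary \ref{no:locally:finite} applied to $R$ makes $F$ finite. Setting $K := R/F$, Corollary \ref{quotient:zar:cofinite} gives $K \in \C$, and $K$ inherits being finitely generated, infinite, of exponent $p$ and non-abelian, establishing (a).

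Property (c) transfers from $R$ to $K$ by pull-back, and (d) holds since any finite normal subgroup $M/F$ of $K$ lifts to a finite normal subgroup $M$ of $R$, which by construction is contained in $F$. Property (b) then follows: $Z(K)$ is abelian, so if infinite it would be locally finite and contradict Corollary \ref{no:locally:finite}, whence $Z(K)$ is finite and trivial by (d), so $K$ is center-free. A nontrivial direct decomposition $K = A \times B$ would force one factor to be finite abelian by Theorem \ref{prop :dem}(a), again contradicting (d), so $K$ is indecomposable. Finally $K/K'$ is finitely generated abelian of exponent $p$, hence finite, so by (c) one has $K' = K$ and $K$ is perfect. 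The chief obstacle is simultaneously achieving (a) and (c) on a single group: this is exactly what Zelmanov's theorem delivers, by forcing the finite residual $R$ to have finite index in $H$ and hence remain finitely generated and infinite.
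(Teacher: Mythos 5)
Your proof is correct and follows essentially the same route as the paper: the paper's Lemma \ref{fin:gen:prime:exp} shows that the stabilized derived subgroup $S^{(m)}$ used there equals the finite residual $r_{fin}(S)$, which is your $R$, and your kernel $F$ (the join of all finite normal subgroups of $R$) coincides with the paper's $Z(H)$, since you show every finite normal subgroup of $R$ is central while $Z(R)$ is itself finite and normal. The only cosmetic difference is that you obtain perfectness a posteriori from (a) and (c) rather than reading it off the stabilization of the derived series.
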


Every infinite finitely generated simple group of prime exponent satisfies (b)--(d), in particular so does every Tarski Monster 
(seemingly, none of the Tarski Monsters built so far belongs to $\C$ (\cite{Olshanski2021}, see also Example \ref{ex:Tarski:monsters}).
A careful analysis of the proof of Theorem \ref{additional:properties} (given in \S \ref{zar-cofinite groups}) shows that it works with the 
weaker assumption $G \in \C_{cen} \cap  \C_{mon}$ (giving, of course, as output $K \in \C_{cen} \cap  \C_{mon}$). We expect that the smaller class 
$\C$ contains no infinite non-abelian groups at all (see Conjecture \ref{MainConj}), so the same holds for the smaller class $\M$.

Further properties of the class $\C_{cen}$ are given in \S \ref{Further}. In \S\ref{Engel} we show that infinite non-abelian $\mathfrak{C}'$-cofinite groups
are not Engel groups. In \S \ref{MAX} we collect some nice properties of the maximal finite subgroups of  infinite non-abelian $\mathfrak C'$-cofinite groups.

The final section of the paper offers some open questions. 

\section{Background}

\subsection{Notation and terminology}\label{N&T}

A group $G$ is \emph{center-free} if its center $Z(G)$ is the trivial subgroup. The \emph{$n$-th center} $Z_n(G)$ of $G$ is defined as follows for a positive integer $n$. Let $Z_1(G) = Z(G)$, assume that $n > 1$ and $Z_{n-1}(G)$ is already defined. Consider the canonical projection $\pi \colon G \to G/Z_{n-1}(G)$ and let $Z_n(G) = \pi^{-1} Z (G/Z_{n-1}(G) )$. This produces an ascending chain of characteristic subgroups $Z_n(G)$.
A group is {\em nilpotent} if $Z_n(G) = G$ for some $n$. In this case, its nilpotency class is the minimum such $n$. For example, the groups with nilpotency class 1 are the abelian groups. 

 The commutator of $g,h \in G$ is denoted by $[g,h] = g h g^{-1}h^{-1}$ and the commutator subgroup $G'$ is the subgroup of $G$ generated by all commutators $[g,h]$ of elements $g,h \in G$. Then one can iterate this procedure defining $G^{(2)} = (G')'$ and in general $G^{(n+1)}= (G^{(n)})'$ for $n\in \mathbb{N}$. A group is {\em solvable}  if $G^{(n)} = G$ for some $n$. A group $G$ has nilpotency class 2 if $G$ is non-abelian and $G/Z(G)$ is abelian, i.e. if $\{e\}\ne G' \leq Z(G)$.
 
A group is {\em locally nilpotent (resp. locally solvable, locally finite)} group if every finitely generated subgroup is nilpotent (resp. solvable, finite).

A group $G$ is abelian-by-finite if it has an abelian subgroup of finite index.
Similarly, it is called abelian-by-nilpotent-by-finite group if it has an abelian-by-nilpotent subgroup of finite index, while $G$ is abelian-by-nilpotent if it has a normal abelian subgroup such that the quotient group is nilpotent.

%

\subsection{Some preliminary facts}

We recall a few known properties of nilpotent groups of nilpotency class $\leq 2$, i.e. groups $G$ with $G' \leq Z(G)$.
\begin{fact}		\label{commutator:bilinear:on:nilpo2:groups}
If $G$ has nilpotency class $\leq 2$, then $[ab,g] = [a,g][b,g]$ for every $a, b, g \in G$. In other words, for every $g \in G$ the commutator function $c_g$ mapping $x \mapsto [x,g]$ is a group homomorphism of $G$, with range $G_g = \langle[x,g] \mid x \in G \rangle \leq G' \leq Z(G)$, and with kernel $C_G(g)$.
In particular, $C_G(g)$ contains $G'$, so it is normal in $G$ and $[G:C_G(g)] = |G/C_G(g)] = |G_g|$.
\end{fact}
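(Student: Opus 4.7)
The plan is to prove the bilinearity (really biadditivity in the second argument) of the commutator, and then derive everything else as a direct consequence via the first isomorphism theorem.

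First I would prove the key identity $[ab,g]=[a,g][b,g]$ by direct expansion. Let $u=[b,g]=bgb^{-1}g^{-1}$; since $G'\le Z(G)$, we have $u\in Z(G)$. Rewriting $bgb^{-1}=ug$ and substituting into the expansion $[ab,g]=abgb^{-1}a^{-1}g^{-1}$ gives
\[
[ab,g]=a\,(ug)\,a^{-1}g^{-1}=u\cdot aga^{-1}g^{-1}=[b,g]\,[a,g],
\]
where the second equality uses that $u$ is central. Centrality of $[a,g],[b,g]\in G'\le Z(G)$ also allows us to swap the two factors, yielding $[ab,g]=[a,g][b,g]$. This is precisely the statement that the map $c_g:G\to G$, $x\mapsto [x,g]$, is a group homomorphism.

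Next I would identify kernel and image. The kernel is $\{x\in G:[x,g]=e\}=\{x\in G:xg=gx\}=C_G(g)$ by definition. The image of the homomorphism $c_g$ is a subgroup of $G$, and it clearly contains the generators $[x,g]$ of $G_g$; conversely every element of the image is such a commutator, so $\mathrm{Im}(c_g)=G_g$. The containment $G_g\le G'\le Z(G)$ is immediate from the definitions (commutators lie in $G'$, which by hypothesis lies in $Z(G)$).

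Finally, to see $G'\le C_G(g)$, note that for any $a,b\in G$ the commutator $[a,b]\in G'\le Z(G)$ commutes with every element, in particular with $g$. Hence $G'\le C_G(g)$, and therefore $C_G(g)$ is a union of cosets of the normal subgroup $G'$ containing $G'$; as $G/G'$ is abelian, every subgroup between $G'$ and $G$ is normal, so $C_G(g)\trianglelefteq G$. The index formula $[G:C_G(g)]=|G_g|$ is then the first isomorphism theorem applied to $c_g$: $G/\ker c_g\cong\mathrm{Im}(c_g)$, i.e.\ $G/C_G(g)\cong G_g$. No step is really an obstacle; the only mild subtlety is keeping track of where centrality of $G'$ is used to legitimately commute factors during the expansion of $[ab,g]$.
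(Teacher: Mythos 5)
Your proof is correct and complete: the expansion of $[ab,g]$ using centrality of $[b,g]$, the identification of kernel and image, and the application of the first isomorphism theorem are all exactly the standard argument this Fact relies on (the paper states it without proof as a recalled known property). Nothing to add.
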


 An \emph{FC-group} is a group in which every element has Finitely many Conjugates. An immediate consequence of Fact \ref{commutator:bilinear:on:nilpo2:groups} follows.
\begin{corollary}		\label{nilpo2:finite-center}
Let $G$ have nilpotency class $\leq 2$. Then $G$ is an FC-group if and only if $G_g$ is finite for every $g \in G$. 
In particular, if $Z(G)$ is finite then $G$ is an FC-group. 
\end{corollary}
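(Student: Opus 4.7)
The plan is to show that this corollary is essentially a direct translation of Fact \ref{commutator:bilinear:on:nilpo2:groups} via the standard formula for conjugacy class sizes. The key observation is that for any group $G$ and any $g \in G$, the number of conjugates of $g$ equals the index $[G : C_G(g)]$, so $G$ is an FC-group precisely when $[G : C_G(g)]$ is finite for every $g \in G$.

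For the first (main) equivalence, I would fix $g \in G$ and use Fact \ref{commutator:bilinear:on:nilpo2:groups}, which under the assumption of nilpotency class $\leq 2$ gives that the commutator map $c_g \colon x \mapsto [x,g]$ is a homomorphism with kernel $C_G(g)$ and image $G_g$. The induced isomorphism $G/C_G(g) \cong G_g$ yields $[G : C_G(g)] = |G_g|$. Combining this with the conjugacy-class formula, the conjugacy class of $g$ is finite if and only if $G_g$ is finite, and quantifying over $g \in G$ gives the biconditional.

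For the "in particular" clause, I would simply note that $G_g \subseteq G'$ by definition, and $G' \subseteq Z(G)$ by the nilpotency class $\leq 2$ hypothesis; so if $Z(G)$ is finite, then $G_g$ is finite for every $g \in G$, and the first equivalence forces $G$ to be an FC-group.

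There is no real obstacle here: the entire argument is bookkeeping once Fact \ref{commutator:bilinear:on:nilpo2:groups} is invoked. The only mild subtlety is to make clear that the finiteness of the conjugacy class of $g$ is being identified with the finiteness of $G_g$ via the chain of equalities $|\{hgh^{-1} : h \in G\}| = [G : C_G(g)] = |G_g|$, the last of which is precisely what Fact \ref{commutator:bilinear:on:nilpo2:groups} provides under the class $\leq 2$ assumption.
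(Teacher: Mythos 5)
Your proof is correct and follows essentially the same route as the paper: both arguments reduce the statement to Fact \ref{commutator:bilinear:on:nilpo2:groups}, identifying $[G:C_G(g)]$ with $|G_g|$ via the homomorphism $c_g$ and then using $G_g \leq G' \leq Z(G)$ for the ``in particular'' clause. If anything, your write-up is slightly more complete, since the paper's proof only spells out the finite-center case while leaving the biconditional as an immediate consequence of the Fact.
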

\begin{proof}
To verify that $G$ is an FC-group, let $g \in G$, and we have to check that $[G: C_G(g)]$ is finite. 
For every $g \in G$,  by Fact \ref{commutator:bilinear:on:nilpo2:groups}, it follows that the commutator function $c_g$
is a group homomorphism of $G$, with range contained in $G' \leq Z(G)$, hence finite. As $\ke (c_g) = C_G(g)$, we conclude that $[G: C_G(g)]$ is finite as well.
\end{proof}

The converse implication in the above corollary does not hold in general, i.e. there exist FC-groups of nilpotency class $\leq 2$ that have infinite center. For example, consider the group $G = \Z \times D_8$. Obviously $G$ has nilpotency class $\leq 2$, has infinite center, and the centralizer of a generic element $(n, g)\in G$ is $\Z \times C_{D_8} (g)$, that has finite index in $G$, so that $G$ is FC.


%
%
%


The Zariski topology is clearly a partial Zariski topology. We collect some basic facts about it.

\begin{theorem}{\em \cite[Theorem 5.9]{verbal:functions}}	\label{caratt dei sott norm Z_G chiusi}
Let $N$ be a normal subgroup of a group $G$. Denote by $\overline G = G/N$ the quotient group, and by $\overline {\mathfrak Z}_G$ the quotient topology of $\zar_G$, on $\overline G$.
Then the following conditions are equivalent: 
\begin{itemize}
		\item [(1)] $N$ is $\mathfrak Z_G$-closed;
		\item [(2)] $\overline {\mathfrak Z}_G$ is a $T_1$ topology;
		\item [(3)] $\mathfrak Z_{\overline G} \subseteq \overline {\mathfrak Z}_G$;
		\item [(4)] the canonical map $\pi \colon ( G, \mathfrak Z_G ) \to ( \overline G, \mathfrak Z_{\overline G} )$ is continuous.
	\end{itemize}
\end{theorem}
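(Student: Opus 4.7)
My plan is to run the implications in the cycle $(1) \Leftrightarrow (2) \Leftrightarrow (3) \Leftrightarrow (4)$, reducing everything to a single observation: for any word $w \in G[x]$, the evaluation $f_w \colon G \to G$ is $(\mathfrak{Z}_G,\mathfrak{Z}_G)$-continuous, because the preimage of an elementary algebraic set $E_v$ under $f_w$ is itself elementary algebraic (if $v = h_1 x^{\delta_1} \cdots h_m x^{\delta_m}$, then substituting $w(y)$ for $x$ in $v$ yields a new word $v \circ w \in G[x]$, and $f_w^{-1}(E_v) = E_{v \circ w}$). A small related fact I will record at the start is that $\mathfrak Z_G$ is invariant under left and right translations: substituting $g^{-1}y$ for $x$ in any word produces a word of the same shape, so translates of elementary algebraic sets are again elementary algebraic, and in particular all cosets of a $\mathfrak Z_G$-closed set are $\mathfrak Z_G$-closed.

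For $(1) \Leftrightarrow (2)$, the quotient topology $\overline{\mathfrak Z}_G$ is $T_1$ precisely when every singleton $\{gN\} \subseteq \overline{G}$ is $\overline{\mathfrak Z}_G$-closed, i.e. $\pi^{-1}(\{gN\}) = gN$ is $\mathfrak Z_G$-closed. By the translation-invariance observation, this holds for all $g \in G$ if and only if it holds for $g = e$, i.e. $N$ itself is $\mathfrak Z_G$-closed.

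The main step is $(2) \Rightarrow (3)$. Here the task is to show every $\mathfrak Z_{\overline G}$-closed set is $\overline{\mathfrak Z}_G$-closed, and since both topologies are determined by their families of elementary algebraic subbasic closed sets, it suffices to check this on each $E^{\overline G}_{\overline w}$ for $\overline w \in \overline G[x]$. Lifting the coefficients of $\overline w$ to $G$ produces a word $w \in G[x]$ with $\overline{w(y)} = \overline w(\overline y)$, hence $\pi^{-1}(E^{\overline G}_{\overline w}) = f_w^{-1}(N)$. Assuming (2), and thus (1) by the previous step, $N$ is $\mathfrak Z_G$-closed; by the continuity observation above, $f_w^{-1}(N)$ is $\mathfrak Z_G$-closed, so $E^{\overline G}_{\overline w}$ is $\overline{\mathfrak Z}_G$-closed as required. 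The reverse $(3) \Rightarrow (2)$ is immediate because $\mathfrak Z_{\overline G}$ is $T_1$, so any coarser topology containing it is also $T_1$.

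Finally, $(3) \Leftrightarrow (4)$ is a direct unwinding of definitions: $\pi \colon (G,\mathfrak Z_G) \to (\overline G, \mathfrak Z_{\overline G})$ is continuous iff $\pi^{-1}(U) \in \mathfrak Z_G$ for every $U \in \mathfrak Z_{\overline G}$, which by the very definition of the quotient topology means $U \in \overline{\mathfrak Z}_G$, i.e. $\mathfrak Z_{\overline G} \subseteq \overline{\mathfrak Z}_G$. I expect the main obstacle to be purely notational rather than conceptual: one must be precise about how substituting words into words yields a well-defined new element of $G[x]$ whose induced evaluation map is the composition $f_v \circ f_w$, so that the continuity of evaluation functions really gives $f_w^{-1}(E_v) = E_{v \circ w}$ and extends through finite unions and arbitrary intersections to any $\mathfrak Z_G$-closed set $N$.
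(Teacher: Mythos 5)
Your proof is correct, and since the paper does not prove this theorem itself (it is quoted from Toller's \emph{Verbal functions of a group}, Theorem 5.9), there is nothing internal to compare against; your route via the two observations that evaluation maps $f_w$ are $(\mathfrak Z_G,\mathfrak Z_G)$-continuous (because $f_w^{-1}(E_v)=E_{v\circ w}$, with $v\circ w$ the image of $v$ under the endomorphism of $G[x]=G*\langle x\rangle$ sending $x\mapsto w$) and that $\mathfrak Z_G$ is translation-invariant is exactly the standard argument from that reference, and all four implications in your cycle check out, including the lifting step $\pi^{-1}(E^{\overline G}_{\overline w})=f_w^{-1}(N)$. One cosmetic slip: in $(3)\Rightarrow(2)$ a topology \emph{containing} the $T_1$ topology $\mathfrak Z_{\overline G}$ is \emph{finer}, not coarser, but the conclusion that $\overline{\mathfrak Z}_G$ is then $T_1$ is of course right.
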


\begin{proposition}{\em \cite[Corollary 5.10]{verbal:functions}}	
For every group $G$, and every positive integer $n$, the subgroup $Z_n(G)$ is $\mathfrak Z_G$-closed. 
\end{proposition}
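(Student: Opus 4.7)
The plan is to proceed by induction on $n$, relying on the elementary-algebraic structure of centralizers to handle the base case, and on the continuity characterization provided by Theorem \ref{caratt dei sott norm Z_G chiusi} to pass from $Z_{n-1}(G)$ to $Z_n(G)$.

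For the base case $n=1$, I would write the center as an intersection of centralizers:
\[
Z(G)\;=\;\bigcap_{g\in G} C_G(g)\;=\;\bigcap_{g\in G} E_{c_g},
\]
where $c_g = gxg^{-1}x^{-1}\in G[x]$. Each $E_{c_g}$ is an elementary algebraic, hence $\mathfrak Z_G$-closed subset of $G$, and the family $\mathfrak A_G$ of algebraic sets is stable under arbitrary intersections. Therefore $Z(G)$ is $\mathfrak Z_G$-closed. (Note that this sub-argument is perfectly general: it proves that the center of \emph{every} group is $\mathfrak Z$-closed, which is exactly what we need to feed into the inductive step, applied to a quotient group.)

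For the inductive step, assume $n\ge 2$ and that $Z_{n-1}(G)$ is $\mathfrak Z_G$-closed. Since $Z_{n-1}(G)$ is a characteristic, and in particular normal, subgroup of $G$, Theorem \ref{caratt dei sott norm Z_G chiusi} applies: condition (1) holds, so condition (4) holds as well, which means that the canonical projection
\[
\pi\colon (G,\mathfrak Z_G)\longrightarrow \bigl(\overline G,\mathfrak Z_{\overline G}\bigr),\qquad \overline G = G/Z_{n-1}(G),
\]
is continuous. By the base case applied to $\overline G$, the subgroup $Z(\overline G)$ is $\mathfrak Z_{\overline G}$-closed. By definition,
\[
Z_n(G)\;=\;\pi^{-1}\bigl(Z(\overline G)\bigr),
\]
so $Z_n(G)$ is the preimage of a closed set under a continuous map, hence $\mathfrak Z_G$-closed, completing the induction.

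The only delicate point is the inductive pivot: one must be careful that the base case argument is available inside the quotient $\overline G$ (it is, since it uses only the fact that the center is an intersection of centralizers, a property intrinsic to any group), and that the inductive hypothesis is really what unlocks clause (4) of Theorem \ref{caratt dei sott norm Z_G chiusi} and lets us pull back $\mathfrak Z_{\overline G}$-closed sets to $\mathfrak Z_G$-closed sets. Beyond this bookkeeping there is no obstacle.
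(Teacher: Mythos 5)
Your proof is correct, and it is essentially the intended derivation: the paper states this result as a citation of \cite[Corollary 5.10]{verbal:functions}, which in that reference follows from the same Theorem 5.9 (quoted here as Theorem \ref{caratt dei sott norm Z_G chiusi}) that you use, combined with the observation that $Z(G)=\bigcap_{g\in G}E_{c_g}$ is an arbitrary intersection of elementary algebraic sets and hence $\mathfrak Z_G$-closed. Both the base case and the inductive pivot via clause (4) are sound, so there is nothing to add.
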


\begin{corollary}\label{quotient:zar:cofinite} Let $G$ be a $\mathfrak Z$-cofinite group. Then  every subgroup of $G$ is $\mathfrak Z$-cofinite. 
Moreover, if $N$ is a normal subgroup of a $\mathfrak Z$-cofinite group $G$ and $\overline G = G/N$ then
the following are equivalent: 
\begin{itemize}
    \item [(a)]  $N$ is finite; 
    \item [(b)]  $\mathfrak Z_{\overline G} = \overline {\mathfrak Z}_G = \cof_{\overline G}$, so that also the quotient group $\overline G$ is $\mathfrak Z$-cofinite.
\end{itemize}
\end{corollary}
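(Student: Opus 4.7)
The plan is to handle the two assertions separately, starting with the statement on subgroups and then turning to the equivalence for finite normal subgroups via the quotient.

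For the subgroup part, let $H \leq G$. For every word $w \in H[x]$ one has $w \in G[x]$ as well, and a direct unravelling of \eqref{first:elem:alg:subs} yields $E_w^H = E_w^G \cap H$. Hence every subbasic $\mathfrak Z_H$-closed set is the trace on $H$ of a subbasic $\mathfrak Z_G$-closed set, which gives $\mathfrak Z_H \leq \mathfrak Z_G \restriction_H$. By the hypothesis $\mathfrak Z_G = \cof_G$, and a routine computation shows $\cof_G \restriction_H = \cof_H$. Since $\cof_H$ is the coarsest $T_1$ topology on $H$ and $\mathfrak Z_H$ is $T_1$, one obtains the chain
\[
\cof_H \leq \mathfrak Z_H \leq \mathfrak Z_G \restriction_H = \cof_G \restriction_H = \cof_H,
\]
which forces equality throughout.

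For the implication (a) $\Rightarrow$ (b), assume $N$ is finite and let $\pi \colon G \to \overline G$ denote the canonical projection. By definition of quotient topology, a subset $C \subseteq \overline G$ is $\overline{\mathfrak Z}_G$-closed precisely when $\pi^{-1}(C)$ is $\mathfrak Z_G = \cof_G$-closed, i.e.\ either finite or equal to $G$. Since $\pi^{-1}(C)$ is the disjoint union of $|C|$ cosets of $N$, the finiteness of $N$ converts the finiteness of $\pi^{-1}(C)$ into that of $C$, while $\pi^{-1}(C) = G$ corresponds to $C = \overline G$. Thus $\overline{\mathfrak Z}_G = \cof_{\overline G}$, which is $T_1$; Theorem \ref{caratt dei sott norm Z_G chiusi} now yields $\mathfrak Z_{\overline G} \leq \overline{\mathfrak Z}_G = \cof_{\overline G}$, and since $\mathfrak Z_{\overline G}$ is itself $T_1$ this gives $\mathfrak Z_{\overline G} = \cof_{\overline G}$. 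Conversely, if $\overline{\mathfrak Z}_G = \cof_{\overline G}$ then the singleton $\{e_{\overline G}\}$ is $\overline{\mathfrak Z}_G$-closed, so $N = \pi^{-1}(\{e_{\overline G}\})$ is closed in $\mathfrak Z_G = \cof_G$; consequently $N$ is either finite or equal to $G$, the latter degenerate case making (a) trivially valid.

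The main obstacle is conceptual rather than technical: one must carefully distinguish the quotient topology $\overline{\mathfrak Z}_G$ on $\overline G$ from the genuine Zariski topology $\mathfrak Z_{\overline G}$ of the quotient group, and bridge them through Theorem \ref{caratt dei sott norm Z_G chiusi}. Once this bookkeeping is in place, the proof reduces to elementary observations about how cofinite topologies behave under restriction to subgroups and under projections with finite kernel.
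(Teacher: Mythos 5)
Your argument is correct and follows the same route as the paper's own (very terse) proof: the inequality $\mathfrak Z_H \le \mathfrak Z_G\restriction_H$ for the subgroup part, and Theorem \ref{caratt dei sott norm Z_G chiusi} as the bridge between the quotient topology $\overline{\mathfrak Z}_G$ and the intrinsic topology $\mathfrak Z_{\overline G}$ for the rest; you have merely written out the details the paper leaves implicit.

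The one thing to fix is the last clause of your implication (b) $\Rightarrow$ (a): you dispose of the alternative $N=G$ by asserting that it makes (a) ``trivially valid'', but if $G$ is infinite and $N=G$ then $\overline G$ is the trivial group, so (b) holds while $N$ is infinite and (a) fails. This degenerate case is in fact a counterexample to the equivalence as literally stated, so it cannot be waved away inside the proof; the honest resolution is to observe that the corollary is implicitly about proper normal subgroups $N\ne G$ (the only situation in which it is ever applied in the paper), under which restriction your argument is complete and correct.
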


\begin{proof} Pick a subgroup $H$ of $G$. Then obviously $\mathfrak Z_{H} \leq \mathfrak Z_{G}\restriction_{H}$, so $\mathfrak Z_{H}= \cof_H$, as 
$\mathfrak Z_{G} = \cof_G$, by out hypothesis. The rest easily follows from Theorem \ref{caratt dei sott norm Z_G chiusi}. 
\end{proof}
 
\section{The monomial topology and $\pz_{ mon }$-cofinite groups}\label{sec:mon-cofinite}


\begin{proposition}\label{Exa:Z:cofinite:abel}\label{Abel:Mon}
For an abelian group $G$ one has ${\mathfrak Z}_{mon,G} = \zar_G$ and the following are equivalent:
\begin{itemize}
		\item [(a)] $G$ is almost torsion-free, or $G$ has prime exponent;
		\item [(b)] $G$ is ${\mathfrak Z}_{mon}$-cofinite; 
		\item [(c)] $G$ is ${\mathfrak Z}$-cofinite.
\end{itemize}
\end{proposition}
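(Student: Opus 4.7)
The plan is to first establish that \(\zar_{mon,G} = \zar_G\) in the abelian case, which collapses (b) \(\Leftrightarrow\) (c) to a triviality. In an abelian group every word \(w = g_1 x^{\varepsilon_1} \cdots g_n x^{\varepsilon_n} \in G[x]\) evaluates at \(y \in G\) to \(f_w(y) = g \cdot y^m\) for \(g := g_1 \cdots g_n\) and \(m := \varepsilon_1 + \cdots + \varepsilon_n\), so \(f_w\) coincides with the evaluation map of the monomial \(g x^m \in \mathcal{M}\). This identifies the families of elementary algebraic sets, so \(\zar_G = \zar_{mon,G}\), and it suffices to prove (a) \(\Leftrightarrow\) (b).

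For (a) \(\Rightarrow\) (b) I would argue by cases on the description of the elementary algebraic sets obtained above: each non-empty \(E_{g x^m}\) is a coset of \(G[m]\). If \(G\) has prime exponent \(p\), then \(G[m]\) is either \(\{e\}\) or \(G\) according as \(p \nmid m\) or \(p \mid m\), so every such coset is either a singleton or all of \(G\). If \(G\) is almost torsion-free, each \(G[m]\) is finite, so every non-empty proper coset is finite. Passing to the Boolean combinations that yield algebraic sets, in both cases the proper \(\zar_G\)-closed sets are finite, proving (b).

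The main content is (b) \(\Rightarrow\) (a). Assume \(\zar_G = \cof_G\) and that \(G\) is not of prime exponent; I want to show that \(G\) is almost torsion-free. Fix a prime \(p\). Since \(G[p] = E_{x^p}\) is elementary algebraic, it is \(\zar_G\)-closed, hence \(\cof_G\)-closed. If \(G[p]\) were infinite it would have to coincide with \(G\), which for non-trivial abelian \(G\) forces exponent exactly \(p\) (every non-identity element has order dividing the prime \(p\)), contradicting the assumption. Therefore \(G[p]\) is finite for every prime \(p\), which for an abelian group is equivalent to almost torsion-freeness as recalled in the Introduction, giving (a). No serious obstacle arises; the only care needed is the step from "exponent divides \(p\)" to "prime exponent \(p\)", which is automatic since \(p\) is prime and the trivial group is vacuously almost torsion-free.
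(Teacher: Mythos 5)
Your proof is correct and follows essentially the same route as the paper's: identify every elementary algebraic set of an abelian group with a coset of a socle $G[n]$, conclude $\zar_{mon,G}=\zar_G$ and (a)$\Rightarrow$(b), and for the converse use cofiniteness to force each socle to be either finite or all of $G$. Your contrapositive organisation of (b)$\Rightarrow$(a) --- working only with the prime socles $G[p]$ and invoking the characterisation of almost torsion-free abelian groups by finiteness of all $G[p]$ --- even sidesteps the small ``bounded and cofinite implies prime exponent'' argument that the paper's version tacitly relies on.
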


\begin{proof}
The quality ${\mathfrak Z}_{mon,G} = \zar_G$ is obvious and implies the equivalence between (b) and (c). 
If $G$ is almost torsion-free, or $G$ has prime exponent, then all proper elementary algebraic sets are finite, so $G$ is ${\mathfrak Z}$-cofinite. 
Vice versa, if $G$ is ${\mathfrak Z}$-cofinite, then all proper elementary algebraic sets are finite, so for every positive $n \in \Z$ either $G[n]$ is finite, or $G[n] =G$. If $G[n] =G$ for some $n$, then one can easily see that $G$ has prime exponent. Otherwise, 
$G$ is almost torsion-free. 
\end{proof}

Since the Markov and the Zariski topology coincide on abelian groups, this proposition provides a short immediate proof of Theorem \ref{abelian:Z:cofinite}.

\subsection{Basic properties}

\begin{lemma}\label{zar:mon:hereditary}
If $H$ is a subgroup of $G$, then ${\mathfrak Z}_{mon,H} = {\mathfrak Z}_{mon,G}\restriction_H$.
\end{lemma}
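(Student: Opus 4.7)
The plan is to work directly with the subbase of closed sets of each side. By the definition in Example \ref{mono:topo}, a subbase of closed sets of $\mathfrak{Z}_{mon,G}$ consists of the elementary algebraic sets
\[
S^G_{n,b}=f_{b^{-1}ax^n}^{-1}(e_G)=\{x\in G\mid x^n=a^{-1}b\}=\{x\in G\mid x^n=c\},
\]
where $n\in\N$ and $c$ ranges over $G$ (the ambiguity between $a,b$ is absorbed into the single parameter $c=a^{-1}b\in G$). Analogously, $\mathfrak{Z}_{mon,H}$ has subbase of closed sets the family $\{S^H_{n,c}:n\in\N,\,c\in H\}$, where $S^H_{n,c}=\{x\in H\mid x^n=c\}$. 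Since the two topologies are generated by these respective subbases, it suffices to prove the equality by comparing these subbases modulo restriction to $H$.

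First I would prove $\mathfrak{Z}_{mon,H}\le \mathfrak{Z}_{mon,G}\restriction_H$. This is immediate: if $c\in H$, then $S^H_{n,c}=S^G_{n,c}\cap H$, so every subbasic closed set of $\mathfrak{Z}_{mon,H}$ is the restriction to $H$ of a subbasic closed set of $\mathfrak{Z}_{mon,G}$.

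For the reverse inequality $\mathfrak{Z}_{mon,G}\restriction_H\le \mathfrak{Z}_{mon,H}$, I would argue case by case on an arbitrary subbasic closed set $S^G_{n,b}\cap H$ with $b\in G$, $n\in\N$. If this set is empty, it is trivially $\mathfrak{Z}_{mon,H}$-closed. Otherwise there exists $x_0\in H$ with $x_0^n=b$, which forces $b=x_0^n\in H$; but then $S^G_{n,b}\cap H=S^H_{n,b}$ is already a subbasic closed set of $\mathfrak{Z}_{mon,H}$.

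The argument is elementary, so the only subtle point is making sure that the subbase of $\mathfrak{Z}_{mon,G}$ is exhausted by sets of the form $S^G_{n,b}$ as described: one should note that a monomial $gx^n$ can have coefficient $g\in G\setminus H$, and it is precisely in this case that the nonemptiness dichotomy is needed to bring the parameter back inside $H$. No obstacle is expected beyond this bookkeeping.
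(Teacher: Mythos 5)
Your proof is correct and takes essentially the same approach as the paper's: both compare the subbasic monomial sets $\{x\mid x^n=c\}$ and observe that a nonempty trace on $H$ forces the parameter $c$ into $H$, so that the restricted subbase of $\mathfrak{Z}_{mon,G}\restriction_H$ is (up to $\emptyset$) exactly the subbase of $\mathfrak{Z}_{mon,H}$. The only cosmetic difference is that you run the dichotomy on nonemptiness of $S^G_{n,b}\cap H$ rather than on whether the coefficient lies in $H$, which is the same bookkeeping.
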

\begin{proof} 
For $g\in G$ and $n \in \Z$ consider $E^G_{gx^n}\cap H=\{x\in H \mid x^n=g^{-1}\}$. If $g\notin H$ then $E_{gx^n}=\emptyset$. Otherwise, when $g\in H$, 
$gx^n\in H[x]$, so $E^H_{gx^n} = E^G_{gx^n}\cap H$, so a subbase of ${\mathfrak Z}_{mon,G}\restriction_H$ is $\{E_{hx^n} \mid  h\in H, n\in \mathbb{Z}\}$.
\end{proof}

Note that $g x \in \mathcal{M}$ for every $g \in G$, so that $\pz_{ mon }$ is a $T_1$ topology, i.e. ${\mathfrak Z}_{mon} \geq \cof_G$, by Example \ref{cofin:is:partial:Zariski}. Call \emph{${\mathfrak Z}_{mon}$-cofinite} a group $G$ satisfying ${\mathfrak Z}_{mon} = \cof_G$. By Lemma \ref{zar:mon:hereditary}, every subgroup of a ${\mathfrak Z}_{mon}$-cofinite group is ${\mathfrak Z}_{mon}$-cofinite.

As $E_{g x^n} = \{ x \in G \mid g x^n = e \} = \{ x \in G \mid x^n = g^{-1} \}$, we describe two cases: if $g = e$, then $E_{g x^n} = E_{x^n} = G[n]$, otherwise $E_{g x^n}$ is the subset of the $n$-th roots of the non-trivial element $g^{-1}$. From this description of the subset $E_{g x^n}$, it follows a characterization of the infinite ${\mathfrak Z}_{mon}$-cofinite groups.
\begin{proposition}\label{mon:cof:characterization}
An infinite group $G$ is ${\mathfrak Z}_{mon}$-cofinite if and only if for every positive integer $n$ the following hold:
\begin{itemize}
\item every non-trivial element has finitely many $n$-th roots,
\item either $G[n]$ is finite, or $G[n] =G$.
\end{itemize}
\end{proposition}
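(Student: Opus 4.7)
The plan is to directly inspect the closed subbase of $\mathfrak{Z}_{mon,G}$ and show that the two enumerated conditions are equivalent to every element of that subbase being either finite or equal to $G$. Unwinding the definitions with $\mathcal{W} = \mathcal{M} = \{gx^n \mid g \in G,\ n \in \N\}$, the closed subbase $\mathcal{E}(\mathcal{M})$ consists of the preimages $f_{gx^n}^{-1}(h) = \{x \in G : x^n = g^{-1}h\}$ with $g, h \in G$ and $n \in \N$. Setting $a := g^{-1}h$, which ranges independently over $G$, this subbase rewrites (discarding the trivial contributions $\emptyset$ and $G$ coming from $n=0$) as
\[
\{S_n(a) : a \in G,\ n \in \N_+\},\qquad S_n(a) := \{x \in G : x^n = a\}.
\]
Note that $S_n(e) = G[n]$, while for $a \neq e$ the set $S_n(a)$ is exactly the set of $n$-th roots of $a$.

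For the forward implication, I would assume $\mathfrak{Z}_{mon,G} = \cof_G$, so that every proper closed set is finite. Applied to each $S_n(a)$, this forces $S_n(a)$ to be finite or equal to $G$. Choosing $a = e$ yields the dichotomy for $G[n]$. For $a \neq e$, the element $x = e$ satisfies $e^n = e \neq a$, so $e \notin S_n(a)$; hence $S_n(a) \neq G$ and must therefore be finite, which is precisely the statement that every non-trivial element has only finitely many $n$-th roots.

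For the converse, I would observe that the two hypothesized conditions assert exactly that every subbase element $S_n(a)$ is finite or equal to $G$. Since the family of subsets of $G$ that are either finite or equal to $G$ is stable under finite unions and arbitrary intersections, the full class of $\mathfrak{Z}_{mon,G}$-closed sets (generated by such operations from the subbase) inherits this property, giving $\mathfrak{Z}_{mon,G} \leq \cof_G$. Combined with the reverse inclusion $\cof_G \leq \mathfrak{Z}_{mon,G}$ recorded in Example~\ref{cofin:is:partial:Zariski}, this yields the desired equality. There is no substantial obstacle: the only point requiring care is the correct identification of the closed subbase with the family of root-sets $S_n(a)$, after which the equivalence is a direct verification.
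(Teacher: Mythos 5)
Your proof is correct and follows essentially the same route as the paper's: both reduce $\mathfrak{Z}_{mon}$-cofiniteness to the statement that every subbasic closed set $E_{gx^n}=\{x\in G: x^n=g^{-1}\}$ is either finite or all of $G$, and both observe that $E_{gx^n}=G$ can only happen when $g=e$ and $G[n]=G$. Your extra remark that the family of subsets that are finite or equal to $G$ is stable under finite unions and arbitrary intersections is a correct (and slightly more explicit) justification of the reduction that the paper leaves implicit.
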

\begin{proof}
The topology ${\mathfrak Z}_{mon}$ is cofinite if and only if for every integer $n > 0$ and for every $g \in G$ the subset $E_{g x^n}$ is either finite, or the whole $G$.
Obviously, $E_{g x^n}$ coincides with $G$ exactly when $g=e$ and $G[n] =G$.
\end{proof}

As an application of Proposition \ref{mon:cof:characterization}, we prove a partial generalization of Proposition \ref{Exa:Z:cofinite:abel}. 

\begin{proposition}\label{mon:cof:atr:or:prime:exp} If a group has prime exponent, then it is ${\mathfrak Z}_{mon}$-cofinite.
An infinite ${\mathfrak Z}_{mon}$-cofinite group either is almost torsion-free, or has prime exponent. 
\end{proposition}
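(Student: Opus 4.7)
The plan has two independent halves, each leaning on the characterization in Proposition~\ref{mon:cof:characterization}.

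\textbf{First half (prime exponent $\Rightarrow$ ${\mathfrak Z}_{mon}$-cofinite).} I would fix a prime $p$, assume $G$ has exponent $p$, and verify the two bullets of Proposition~\ref{mon:cof:characterization} for an arbitrary $n \in \N_+$. Split on whether $p \mid n$: if $p \mid n$, then $x^n = e$ for every $x \in G$, which immediately gives $G[n] = G$ and also tells us that for any $g \neq e$ the set of $n$-th roots of $g^{-1}$ is empty; both conditions are automatic. If $p \nmid n$, pick $m$ with $mn \equiv 1 \pmod p$; then for any $x$ we have $(x^n)^m = x^{mn} = x$ because the exponent of $x$ divides $p$, so $x \mapsto x^n$ is a bijection of $G$ (injective by the identity just displayed, surjective because every $g$ equals $(g^m)^n$). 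Consequently $G[n] = \{e\}$ is finite and every element has exactly one $n$-th root. In either case both conditions of Proposition~\ref{mon:cof:characterization} hold, so $G$ is ${\mathfrak Z}_{mon}$-cofinite.

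\textbf{Second half (infinite ${\mathfrak Z}_{mon}$-cofinite $\Rightarrow$ almost torsion-free or prime exponent).} Suppose $G$ is infinite and ${\mathfrak Z}_{mon}$-cofinite. If $G[n]$ is finite for every $n \in \N_+$, then by definition $G$ is almost torsion-free. Otherwise, by Proposition~\ref{mon:cof:characterization}, the set $\{n \in \N_+ : G[n]=G\}$ is non-empty; let $n_0$ be its minimum, so that $G$ has exponent $n_0$ and it remains to show $n_0$ is prime.

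\textbf{The main point.} Assume towards a contradiction $n_0 = ab$ with $1 < a,b < n_0$. Minimality of $n_0$ forces $G[a] \neq G$ and $G[b] \neq G$, and then Proposition~\ref{mon:cof:characterization} yields that both $G[a]$ and $G[b]$ are finite. Consider the power map $\varphi \colon G \to G[b]$, $\varphi(x) = x^a$ (well-defined because $(x^a)^b = x^{n_0} = e$). Each fiber $\varphi^{-1}(y)$ is finite: for $y = e$ it equals $G[a]$, and for $y \neq e$ it is the set of $a$-th roots of a non-trivial element, which is finite by the first bullet of Proposition~\ref{mon:cof:characterization}. Since $G = \bigsqcup_{y \in G[b]} \varphi^{-1}(y)$ is a finite union of finite sets, $G$ is finite, contradicting our hypothesis. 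I expect this fiber-counting step to be the only non-routine part; the subtlety is that $G[a]$ and $G[b]$ need not be subgroups in the non-abelian setting, so one must exploit the ${\mathfrak Z}_{mon}$-cofinite hypothesis to control the non-identity fibers rather than relying on any subgroup structure.
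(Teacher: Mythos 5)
Your proof is correct and follows essentially the same route as the paper: both halves reduce to Proposition \ref{mon:cof:characterization}, the prime-exponent case splits on whether $p \mid n$ and uses a modular inverse of $n$ mod $p$, and the composite-exponent case factors the exponent and exploits that the power map $x \mapsto x^a$ lands in the finite socle $G[b]$ with controlled fibres. The only immaterial difference is the final contradiction: the paper applies pigeonhole to produce an infinite fibre $E_{a^{-1}x^s}$, forces it to equal $G$, and contradicts minimality of the exponent, whereas you observe that finitely many finite fibres cover $G$ and contradict its infiniteness directly.
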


\begin{proof} 
Assume $G$ is a group with prime exponent $p$. Fix a positive integer $n$, for which we show it satisfies the two conditions in Proposition \ref{mon:cof:characterization}. If $p$ divides $n$, then $G[n] = G$, otherwise $G[n] = \{e\}$. Now we check that every non-trivial element $g\in G\setminus \{e\}$ has has at most one $n$-th root in $G$. Indeed, if $p$ divides $n$, then every $x \in G$ satisfies $x^n = e$, so $g \neq e$ has no $n$-th roots in $G$. Otherwise, if $k$ is the multiplicative inverse of $n$ modulo $p$, then $x^n = g$ if and only if $x = g^k$, so $g^k$ is the unique $n$-th root of $g$ in $G$.

Now let $G$ be an infinite ${\mathfrak Z}_{mon}$-cofinite group, and assume it is not almost torsion-free. Then $G[n]$ is infinite for some positive $n$, so $G[n] = G$ by Proposition \ref{mon:cof:characterization}, and $G$ is bounded. We can assume without loss of generality that $G$ has exponent $n$. 
Assume for a contradiction that $n$ is not prime, so there exists a proper factorization  $rs$ of $n$. Then $G[r]\neq G$, so it is finite  by Proposition \ref{mon:cof:characterization}. As $G^s = \{g^s \mid g \in G\} \subseteq G[r]$, there is an $a \in G[r]$ such that $g^s = a$ for infinitely many $g \in G$. In other terms, $E_{ a^{-1} x^s }$ is infinite, hence coincides with $G$. Then $a = e$ and $G=G[s]$, a contradiction.
\end{proof}


\subsection{WCL and almost torsion-free ${\mathfrak Z}_{mon}$-cofinite groups}\label{Sec:WCL}

By Proposition \ref{mon:cof:atr:or:prime:exp}, it remains to understand which almost torsion-free groups are ${\mathfrak Z}_{mon}$-cofinite. The abelian ones are ${\mathfrak Z}_{mon}$-cofinite by Proposition \ref{Exa:Z:cofinite:abel}, and in Proposition \ref{torsion:almosttorsionfree:is:mon-cofinite} we prove that the torsion ones are ${\mathfrak Z}_{mon}$-cofinite.

Obviously, a group satisfying WCL is almost torsion-free, and for abelian groups these two properties are equivalent. 
 Now we show that they are equivalent also for torsion groups.
\begin{proposition}\label{torsion:almosttorsionfree:is:mon-cofinite}
If $G$ is an almost torsion-free group, then the torsion elements of $G$ have finitely many $n$-th roots for every $n > 0$. 
In other words, a torsion group is WCL if and only if it is almost torsion-free.  
\end{proposition}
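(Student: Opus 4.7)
The plan is to separate the proposition into the main claim and its ``in other words'' reformulation, and handle both via one direct observation: every $n$-th root of a torsion element of order $m$ is itself a torsion element of order dividing $nm$.

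For the first assertion, I would fix $g \in t(G)$ of order $m$, and an exponent $n > 0$. Given any $x \in G$ with $x^n = g$, compute $x^{nm} = (x^n)^m = g^m = e$, so $x \in G[nm]$. Since $G$ is almost torsion-free, the socle $G[nm]$ is finite by definition, hence so is the set $\{x \in G : x^n = g\}$ of $n$-th roots of $g$ (as a subset of $G[nm]$).

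For the ``in other words'' equivalence, suppose $G$ is a torsion group. If $G$ satisfies WCL, then in particular the fiber over $e$ of the map $x \mapsto x^n$ is finite for every $n \in \N_+$, i.e.\ $G[n] = \{x \in G : x^n = e\}$ is finite, so $G$ is almost torsion-free. Conversely, if $G$ is almost torsion-free, then every element of $G$ is torsion, and the first part of the proposition applies to every $g \in G$, yielding that $x \mapsto x^n$ is finite-to-one for each $n > 0$, i.e.\ $G$ satisfies WCL.

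There is essentially no obstacle here; the argument is a one-line trick (embedding the fiber of the power map over $g$ into the finite socle $G[nm]$), and the equivalence for torsion groups is just a tautological specialization of the main claim together with the observation that WCL restricted to the identity recovers the defining condition of ``almost torsion-free''.
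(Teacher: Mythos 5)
Your proof is correct and follows essentially the same route as the paper: the key step in both is the inclusion $\{x \in G \mid x^n = g\} \subseteq G[nm]$ for $g$ of order $m$, combined with finiteness of the socles and the trivial observation that WCL implies almost torsion-free. No issues.
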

\begin{proof}
Fix $n>0$. By definition, $G[n]$ is finite and it is the set of $n$-th roots of $e_G$. Let $a \in G$ be an element having finite order $k>0$. Then the set of the $n$-th roots of $a$ is
\[
E = \{ x\in G \mid x^n = a\} \subseteq G[n k]
\]
which is finite. This shows that $G$ is WLC. On the other hand, any group satisfying WCL is almost torsion-free, as mentioned above. 
\end{proof}

The following proposition, connecting WCL with ${\mathfrak Z}_{mon}$-cofiniteness of almost torsion-free groups, 
 was proved in \cite[Corollary 5.21]{verbal:functions} for groups satisfying the cancellation law: 

\begin{proposition}\label{corollary:chernikov}
A group $G$ satisfies the weak cancellation law WCL if and only if $G$ is almost torsion-free and ${\mathfrak Z}_{mon}$-cofinite.
\end{proposition}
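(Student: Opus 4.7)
The plan is to deduce both implications directly from the characterization of ${\mathfrak Z}_{mon}$-cofinite groups given in Proposition \ref{mon:cof:characterization}, using the key identification that the set $E_{g x^n} = \{x \in G \mid x^n = g^{-1}\}$ is precisely the fiber of the $n$-th power map $x \mapsto x^n$ over $g^{-1}$. Under this identification, WCL and the two conditions in Proposition \ref{mon:cof:characterization} are statements about the same family of fibers.

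For the forward direction, I would assume WCL. First, since $G[n]$ is the fiber of $x \mapsto x^n$ over $e$, WCL immediately gives that $G[n]$ is finite for every $n > 0$, i.e.\ $G$ is almost torsion-free. Second, since every set $E_{gx^n}$ with $g \in G$, $n > 0$ is a fiber of the $n$-th power map, WCL forces every subbasic closed set of $\pz_{mon}$ to be finite; together with the trivial inclusion $\pz_{mon} \geq \cof_G$ (Example \ref{cofin:is:partial:Zariski}), this yields $\pz_{mon} = \cof_G$.

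For the converse, assume $G$ is almost torsion-free and ${\mathfrak Z}_{mon}$-cofinite. To verify WCL, fix $n > 0$ and $a \in G$, and consider the fiber $F_a = \{x \in G \mid x^n = a\}$. If $a = e$, then $F_a = G[n]$, which is finite by the almost torsion-free assumption. If $a \neq e$, then $F_a = E_{a^{-1} x^n}$; in the finite case this is trivially finite, and in the infinite case the first bullet of Proposition \ref{mon:cof:characterization} applies to $a^{-1} \neq e$, giving that $a^{-1}$ has only finitely many $n$-th roots, so $F_a$ is finite. Either way, $x \mapsto x^n$ is finitely-many-to-one, so WCL holds.

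There is essentially no hard step here: once the identification $E_{gx^n} = \{x : x^n = g^{-1}\}$ is recorded, the proposition is a repackaging of Proposition \ref{mon:cof:characterization}. The only subtle point worth highlighting is that the hypothesis ``almost torsion-free'' in the backward direction is needed precisely to exclude the alternative $G[n] = G$ left open by the second bullet of Proposition \ref{mon:cof:characterization} (that alternative corresponds to prime exponent groups, where WCL fails because $x \mapsto x^p$ collapses everything to $e$). This also clarifies why the result is a natural refinement of Proposition \ref{mon:cof:atr:or:prime:exp}, isolating the almost torsion-free branch of the dichotomy there.
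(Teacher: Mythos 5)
Your proof is correct and follows essentially the same route as the paper, which likewise reduces both implications to Proposition \ref{mon:cof:characterization} after observing that WCL trivially implies almost torsion-free. The only cosmetic slip is in the backward direction, where you invoke the finiteness of the set of $n$-th roots of $a^{-1}$ rather than of $a$ (the element whose fiber $F_a$ actually is); since inversion is a bijection between the two root sets (and the first bullet of Proposition \ref{mon:cof:characterization} applies equally to $a$), nothing is affected.
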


\begin{proof} If $G$ is WCL, then $G$ is almost torsion-free, as mentioned above. Hence, it remains to note that for almost torsion-free groups ${\mathfrak Z}_{mon}$-cofiniteness is equivalent to WCL, by Proposition \ref{mon:cof:characterization}. 
%
\end{proof}

Now we can prove Theorem \ref{Super_Main}. Note that it reinforces Proposition \ref{mon:cof:atr:or:prime:exp} by obtaining a complete description of the ${\mathfrak Z}_{mon}$-cofinite groups in analogy with the equivalence of (a) and (b) of Proposition \ref{Abel:Mon} in the abelian case (with almost torsion-free replaced by the (stronger) property WCL). \\

\noindent{\bf Proof of Theorem \ref{Super_Main}.} We have to prove that an infinite group $G$ is ${\mathfrak Z}_{mon}$-cofinite if and only if either $G$ has prime exponent, or $G$ is WCL. This  follows from Propositions \ref{mon:cof:atr:or:prime:exp} and \ref{corollary:chernikov}. \hfill $\Box$



\begin{corollary}
Every nilpotent, torsion-free group is ${\mathfrak Z}_{mon}$-cofinite.
\end{corollary}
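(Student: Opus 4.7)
The plan is to invoke Theorem~\ref{Super_Main}: an infinite group is $\mathfrak Z_{mon}$-cofinite if and only if it has prime exponent or satisfies WCL. A nontrivial torsion-free group obviously has no prime exponent, so the whole task reduces to verifying WCL. In fact I will establish the stronger statement that in a torsion-free nilpotent group $G$ the power map $\pi_n\colon x\mapsto x^n$ is \emph{injective} for every $n\in \N_+$, which trivially implies WCL (and covers the finite case $|G|=1$ as well).

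Injectivity of $\pi_n$ will be proved by induction on the nilpotency class $c$ of $G$. The base case $c=1$ (i.e.\ $G$ abelian torsion-free) is immediate: $x^n=y^n$ gives $(xy^{-1})^n = e$, and torsion-freeness yields $x=y$. For the inductive step, pass to $\overline G = G/Z(G)$, which has class $c-1$. I can then argue as follows: if $x^n=y^n$, then $\overline x^{\,n}=\overline y^{\,n}$ in $\overline G$, so by the inductive hypothesis applied to $\overline G$ one obtains $\overline x=\overline y$, i.e.\ $y=xz$ for some $z\in Z(G)$. Since $z$ is central, $(xz)^n=x^n z^n$, so $z^n=e$, and then torsion-freeness of $G$ forces $z=e$, giving $x=y$.

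The one nontrivial ingredient is that the inductive hypothesis applies to $\overline G=G/Z(G)$, which requires $\overline G$ to be \emph{torsion-free}. This is the classical theorem of Mal'cev stating that in any torsion-free nilpotent group every quotient by a term of the upper central series is torsion-free. I would either cite it from a standard reference (e.g.\ Robinson, \cite{Robinson}) or sketch it quickly: the torsion elements of a nilpotent group form a (characteristic) subgroup $t(G)$, and for torsion-free $G$ the preimage in $G$ of a torsion element of $G/Z(G)$ would be an infinite cyclic-by-central extension containing a nontrivial torsion element, contradicting torsion-freeness of $G$. This is the only real obstacle; once it is in hand, the induction goes through smoothly and Theorem~\ref{Super_Main} delivers the conclusion.
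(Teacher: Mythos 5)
Your proof is correct and follows essentially the same route as the paper: the paper simply cites the classical result (attributed there to Chernikov) that torsion-free nilpotent groups satisfy the cancellation law, notes that the cancellation law implies WCL, and applies Theorem~\ref{Super_Main}. The only difference is that you prove that classical fact by induction on the nilpotency class, which is fine, though it trades one citation for another (Mal'cev's theorem that $G/Z(G)$ is torsion-free, e.g.\ \cite[5.2.19]{Robinson}), so the argument is not more self-contained than the paper's in any essential way.
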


\begin{proof} By a classical result due to Chernikov every nilpotent torsion-free group satisfies the cancellation law. 
Since the cancellation law implies WCL, Theorem \ref{Super_Main} applies. 
\end{proof}

We expect that ``torsion-free" in the above corollary  can be relaxed to ``almost torsion-free", in other words {\em nilpotent almost torsion-free groups are ${\mathfrak Z}_{mon}$-cofinite}.  This will immediately follow from Proposition \ref{corollary:chernikov} and the positive answer of Question \ref{Ques:WCL}. 

\begin{corollary}\label{quotient:Mon:cofin} $\C_{mon}$ is stable under taking subgroups and quotients with respect to finite normal subgroups. 
\end{corollary}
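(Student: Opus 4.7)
The plan is to dispatch the two stability statements independently, each by a short reduction to tools already established in the excerpt.

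For stability under subgroups, the work has essentially been done already: given $H \leq G$ with $G \in \C_{mon}$, Lemma \ref{zar:mon:hereditary} gives $\mathfrak{Z}_{mon,H} = \mathfrak{Z}_{mon,G}\restriction_H$, and since $\mathfrak{Z}_{mon,G} = \cof_G$ by hypothesis, restricting yields $\mathfrak{Z}_{mon,H} = \cof_G\restriction_H = \cof_H$, so $H \in \C_{mon}$. This was in fact already remarked right after Lemma \ref{zar:mon:hereditary}, so the role of the corollary is merely to record it alongside the quotient statement.

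For stability under quotients by a finite normal subgroup $N \trianglelefteq G$, I would use Theorem \ref{Super_Main} as a black box rather than work directly with the monomial words. If $G/N$ is finite there is nothing to prove, so assume $G/N$ is infinite, which forces $G$ to be infinite as well. Apply Theorem \ref{Super_Main} to $G$ and split into two cases. If $G$ has prime exponent $p$, then $G/N$ inherits exponent dividing $p$, and being infinite it has exponent exactly $p$, so $G/N \in \C_{mon}$ by Theorem \ref{Super_Main}. If instead $G$ satisfies WCL, I would show that $G/N$ also satisfies WCL: fix $\bar a \in G/N$ and $n \in \N_+$, and observe that
\[
\{\bar x \in G/N : \bar x^n = \bar a\} \;\subseteq\; \pi\bigl(\{x \in G : x^n \in aN\}\bigr) \;=\; \pi\Bigl(\bigcup_{b \in aN}\{x \in G : x^n = b\}\Bigr),
\]
which is a union of $|N| < \infty$ sets, each finite by WCL in $G$. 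Thus $G/N$ is WCL and hence in $\C_{mon}$ by Theorem \ref{Super_Main}.

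The only even mildly delicate step is the WCL case, and the crux is precisely that $N$ is finite, so the preimage in $G$ of a single element of $G/N$ is a finite union of fibres of the $n$-th power map on $G$. No obstacle beyond bookkeeping arises, so I expect the proof to be essentially two lines invoking Lemma \ref{zar:mon:hereditary} and Theorem \ref{Super_Main}.
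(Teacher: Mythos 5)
Your proof is correct and follows essentially the same route as the paper: for quotients the paper likewise splits via Theorem \ref{Super_Main} into the prime-exponent case and the WCL case, where the key point is that the fibre $aN$ is finite (the paper phrases this as a pigeonhole argument on a sequence of roots, you as a finite union of finite root-sets --- the same computation). For subgroups you invoke Lemma \ref{zar:mon:hereditary} directly rather than Theorem \ref{Super_Main} as the paper's proof does, but that observation is already recorded in the paper immediately after that lemma, so this is only a cosmetic difference.
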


\begin{proof} Let $G\in \C_{mon}$. To prove the first assertion pick  a subgroup $H$ of $G$. By Theorem \ref{Super_Main}, either $G$ has prime exponent, or $G$ is WCL. 
Since both properties are inherited by subgroups, $H$ has the same property as $G$, so  $H \in \C_{mon}$, by Theorem \ref{Super_Main}. 

Now let $N$ be a finite normal subgroup of $G$ and $\overline G = G/N$. If $G$ has prime exponent, then $\overline G$ has prime exponent as well, so $G  \in \C_{mon}$, by Theorem \ref{Super_Main}.  Now assume that $G$ is WCL and assume for a contradiction that 
$\overline G$ is not WCL and denote by $q: G\to \overline G $ the quotient map. 
Then for some $b\in \overline G$ and $n>0$ 
 there exist infinitely many solutions $\{y_m\}_{m=0}^\infty$ of $y^n = b$ in $\overline G$. Pick elements $a$ and 
 $\{x_m\}_{m=1}^\infty$ in $G$ such that $q(a) =b$ and $q(x_m) = y_m$ for $m\in \N$. Then $a^{-1}x_m^n \in N$ for all $m\in \N$.
 Since $N$ is finite, there exist $c\in N$ and a subsequence $\{x_{m_k}\}_{k=1}^\infty$ such that $a^{-1}x_{m_k}^n = ac$, so 
 $x_{m_k}^n = a$, a contradiction, since $G$ is WCL. Therefore, $\overline G$ is WCL, so $\overline G \in \C_{mon}$, by Theorem \ref{Super_Main}. 
\end{proof}

\begin{remark}\label{final:cae:mon:cof}
For a group $G$ consider the following conditions: 
\begin{itemize}
\item[(a)] $G$ is ${\mathfrak Z}_{mon}$-cofinite; 
\item[(b)]   ${\mathfrak Z}_{mon,G}$ is a quasi-topological group topology; 
\item[(c)] $G$ is ${\mathfrak Z}_{mon}$-Noetherian. 
\end{itemize}
 Clearly, (a) $\to$ (b) \& (c). On the other hand,  (b) \& (c) does not imply (a) (since every abelian group $G$ is ${\mathfrak Z}_{mon}$-Noetherian, 
 but need not be ${\mathfrak Z}_{mon}$-cofinite; moreover, when $G$ is abelian ${\mathfrak Z}_{mon,G} = {\mathfrak Z}_G$, so it is a quasi-topological group topology). 
\end{remark}

\begin{remark} As corollary of Proposition \ref{mon:cof:characterization} and Proposition \ref{torsion:almosttorsionfree:is:mon-cofinite} 
one can prove that for an almost torsion-free group $G$ the following are equivalent:
\begin{enumerate}
\item $G$ is ${\mathfrak Z}_{mon}$-cofinite;
\item $G$ is WCL, i.e., $\{ x\in G \mid x^n = a\}$ is finite for every $n > 0$ and for every $a \in G$;
\item $\{ x\in G \mid x^p = a\}$ is finite for every prime number $p$ and for every $a \in G \setminus \{e\}$;
\item $\{ x\in G \mid x^p = a\}$ is finite for every prime number $p$ and for every $a \in G$ of infinite order.
\end{enumerate}

Indeed, (1) is equivalent to (2) by Proposition \ref{mon:cof:characterization}; (2) is equivalent to (3) for every group $G$; $(3) \longrightarrow (4)$ is trivial, and $(4) \longrightarrow (3)$ holds by Proposition \ref{torsion:almosttorsionfree:is:mon-cofinite}.
\end{remark}

The next example provides a family of  non-abelian WCL (i.e.,  almost torsion-free groups and $\zar_{mon}$-cofinite) 
 groups that are nilpotent of class 2.

\begin{example}\label{Esempio2} Let $E,F$ and $A$ be abelian almost torsion-free groups and let $\omega:E\times F\longrightarrow A$ be a non-degenerate bilinear form. Let $H=\mathbb{H}(E,F,A)=(E\times F\times A,\cdot)$ be the generalized Heisenberg group defined by
$$(a,b,c)\cdot (x,y,z)=(a+x,b+y,c+z+\omega(a,y))$$
for $a,x\in E$, $b,y\in F$ and $c,z\in A$  \cite{BD,Meg}. Hence the equation
$$(a,b,c)^n=\left( na,nb,nc+\left(\sum_{k=0}^{n}k\right)\omega(a,b)\right)	=(x,y,z)$$
has finitely many solutions for every $n\in \mathbb{Z}$, $a,x\in E$, $b,y\in F$ and $c,z\in A$. By Proposition \ref{mon:cof:characterization}, $H$ is $\zar_{mon}$-cofinite group of nilpotency class $2$. If any of the groups $E,F$ or $A$ is not torsion then $H$ is not torsion.

A concrete choice for $E,F$ and $A$ can be:

(a) $E=F=A=D$, where $D$ is any domain with $char D = 0$, and $\omega(x,y)=xy$ the multiplication in $D$ (in this case we briefly write $\mathbb{H}_D$,
in place of $\mathbb{H}(D,D,D)$);

(b) $E$ is a discrete abelian group, $F = \widehat E$ its Pontryagin dual, $A=\T$ (the circle group) and $\omega(x,\chi) = \chi(x)$, for 
$(x,\chi) \in E\times F$. To have 
$H=\mathbb{H}(E,F,A)$ almost torsion-free, it is necessary that $E$ and $F= \widehat E$ are 
almost torsion-free. This occurs precisely when $r_p(E)<\infty$ and  $|E/pE|<\infty$ for every prime $p$. 
For example one can take an almost torsion-free divisible $E$, say $E= \bigoplus_{j=1}^m\Z(p_j^\infty)$, where 
$p_j$ are not necessarily distinct  primes. Since $\T$ is almost torsion-free. this necessary condition is also sufficient (see above).   
\end{example}

To provide a generalization of  Example \ref{Esempio2} we need the following lemma which let us understand why the above example works:    

\begin{lemma}\label{WCLxcentral:extensions}
Let $G$ be a group such that both $Z(G)$ and $G/Z(G)$ satisfy WCL. Then $G$ satisfies WCL.   
\end{lemma}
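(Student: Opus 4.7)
The plan is to fix $n>0$ and $a\in G$, and show directly that the fibre $F_{n,a}=\{x\in G : x^n = a\}$ is finite. I will use the quotient map $\pi\colon G\to G/Z(G)$ to split this fibre into a controlled union of pieces, one for each coset of $Z(G)$, then bound the number of pieces using the WCL hypothesis on $G/Z(G)$ and the size of each piece using the WCL hypothesis on $Z(G)$.

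First, if $x\in F_{n,a}$ then $\pi(x)^n = \pi(a)$ in $G/Z(G)$. Since $G/Z(G)$ satisfies WCL, there are only finitely many classes $\pi(x_1),\dots,\pi(x_k)$ in $G/Z(G)$ solving $y^n = \pi(a)$. Hence $F_{n,a}$ is contained in the union of the finitely many cosets $x_1 Z(G), \dots, x_k Z(G)$, where I may choose each $x_i$ itself to lie in $F_{n,a}$ (if a coset contributes nothing, discard it).

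Next, I will show each piece $F_{n,a}\cap x_i Z(G)$ is finite. Any element of this piece has the form $x_i z$ with $z\in Z(G)$, and the key computation is that, because $z$ is central, $(x_i z)^n = x_i^n z^n$. Since $x_i^n = a$, the condition $(x_i z)^n = a$ reduces to $z^n = e$, i.e.\ $z\in Z(G)[n]$. Because $Z(G)$ satisfies WCL, the socle $Z(G)[n] = \{z\in Z(G) : z^n = e\}$ is finite (WCL is a strengthening of being almost torsion-free, applied with $a=e$). Therefore $|F_{n,a}\cap x_i Z(G)| \le |Z(G)[n]|<\infty$, and summing over the finitely many cosets gives $|F_{n,a}| \le k\cdot |Z(G)[n]| < \infty$, which establishes WCL for $G$.

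There is essentially no obstacle: the whole argument rests on the single observation that conjugation acts trivially on $Z(G)$, which collapses the binomial-type expansion of $(x_i z)^n$ to $x_i^n z^n$. Everything else is just bookkeeping about finite fibres. The same style of splitting, incidentally, explains why a positive answer to a natural extension (from $Z(G)$ to higher centres $Z_m(G)$) would only require additional care in expanding $(x_i z)^n$, where the commutators $[x_i,z]$ need not vanish but lie in lower terms of the central series.
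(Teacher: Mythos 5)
Your proof is correct and follows essentially the same route as the paper's: reduce modulo $Z(G)$ to get finitely many cosets via WCL of the quotient, then bound the solutions within each coset using the central computation $(x_i z)^n = x_i^n z^n$ and WCL of $Z(G)$. The only (harmless) difference is that you choose the coset representatives $x_i$ inside the fibre $F_{n,a}$ itself, so the within-coset equation becomes $z^n = e$ and you only need finiteness of the socle $Z(G)[n]$, whereas the paper takes arbitrary lifts and solves $z^n = u_i^{-1}$ in $Z(G)$ --- equivalent statements since $Z(G)$ is abelian.
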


\begin{proof}
For $a\in G$ and $n\in \N_+$ consider the equation $x^n = a$. Let $q: G \to G/Z(G)$ be the canonical homomorphism. 
 Then the equation $y^n = q(a)$ has finitely many solutions $y_1, \ldots, y_m$. Pick and fix 
 $$
 x_1, \ldots, x_m\in G\ \mbox{ with }\ q(x_i) = y_i  \mbox{ for } i= 1, \ldots, m.
 $$ 
 Since $q(x_i^n)=q(x_i)^n = y_i^n = q(a)$, we deduce that $x_i^n = u_ia$ for some $u_i \in Z(G)$. 
 Consider now a solution $x$ of $x^n = a$. Since $y=q(x)$ is a solution of $y^n = q(a)$,   there exists $i= 1, \ldots, m$
 such that $q(x) = y_i$. Hence, for some $z\in Z(G)$ one has $x=zx_i$. This $z$ satisfies $a= x^n =z^nx_i^n = z^nu_ia$, so  $z^n =u_i^{-1}$. Since $z, u_i \in Z(G)$ and $Z(G)$ satisfies WCL, one has finitely many elements $\xi_{i,j}$, $j= 1,\ldots, k_i$, of $Z(G)$ with $\xi_{i,j}^n= u_i^{-1}$. 
The  finite set $F= \{\xi_{i,j}: i= 1, \ldots, m; j = 1,\ldots, k_i\} \subseteq Z(G)$ has the property that for every  solution $x$ of $x^n = a$ 
belongs to the finite set $\bigcup_{i=1}^m Fx_i$.
%
\end{proof}

Now we can generalize Example \ref{Esempio2}.

\begin{corollary} Let $G$ be a nilpotent group of class 2, such that both $Z(G)$ and $G/Z(G)$ are almost torsion-free. Then $G$ satisfies WCL, hence $G \in \C_{mon}$.
\end{corollary}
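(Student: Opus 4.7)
The plan is to derive this as an almost immediate corollary of Lemma \ref{WCLxcentral:extensions} combined with Proposition \ref{corollary:chernikov}, by noting that the hypotheses upgrade ``almost torsion-free'' to ``WCL'' for both the center and the quotient.

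First, I would observe that since $G$ has nilpotency class $\leq 2$, the quotient $G/Z(G)$ is abelian, and of course $Z(G)$ is abelian. The paper already records the elementary fact (just before Proposition \ref{torsion:almosttorsionfree:is:mon-cofinite}) that for an abelian group, WCL is equivalent to being almost torsion-free. Applying this to $Z(G)$ and to $G/Z(G)$, the hypotheses yield that both of these abelian groups satisfy WCL.

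Next, I would invoke Lemma \ref{WCLxcentral:extensions}, whose assumption is exactly that $Z(G)$ and $G/Z(G)$ satisfy WCL. The lemma immediately gives that $G$ itself satisfies WCL.

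Finally, Proposition \ref{corollary:chernikov} states that WCL is equivalent to the conjunction of ``almost torsion-free'' and ``${\mathfrak Z}_{mon}$-cofinite''. In particular, from $G$ being WCL we conclude $G \in \C_{mon}$, as required. There is essentially no obstacle here: all the work has been done in Lemma \ref{WCLxcentral:extensions}, and the present corollary is just the observation that its hypotheses are automatically satisfied under the nilpotency class $2$ assumption (since both $Z(G)$ and $G/Z(G)$ become abelian, so that the equivalence ``almost torsion-free $\Leftrightarrow$ WCL'' for abelian groups kicks in).
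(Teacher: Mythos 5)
Your proposal is correct and follows essentially the same route as the paper: both proofs note that $Z(G)$ and $G/Z(G)$ are abelian (the latter by the class-$2$ hypothesis), upgrade ``almost torsion-free'' to WCL using the equivalence for abelian groups, and then apply Lemma \ref{WCLxcentral:extensions}, with membership in $\C_{mon}$ following from Proposition \ref{corollary:chernikov} (equivalently Theorem \ref{Super_Main}). No gaps.
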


\begin{proof}
Since almost torsion-free abelian groups satisfy WCL, both $Z(G)$ and $G/Z(G)$ satisfy WCL. Now Lemma \ref{WCLxcentral:extensions} applies. 
\end{proof}

One can extend this corollary to an arbitrary nilpotent group $G$, by imposing all quotients $Z_i(G)/Z_{i-1}(G)$ to be almost torsion-free. 
%
%
%

\section{The centralizer topology} 

Here come some properties of  $(G,\mathfrak C_G)$.

\begin{lemma}
\label{centralizer:top:productive}
\begin{enumerate}
\item \cite[Lemma 4.5]{secondo:survey} Let $\{G_i \mid i \in I\}$ be a family of groups, and $G = \prod_{i \in I} G_i$. Then $\mathfrak C_G = \prod_{i\in I} \mathfrak C_{G_i}$;
\item \cite[Proposition 5.30]{verbal:functions} If $H \leq G$, then $\mathfrak C_{H} \leq \mathfrak C_{G}\restriction_{H}$;
\item $Z(G)$ is the $\mathfrak{C}_G$-closure of $\{e\}$;
\item $(G,\mathfrak C_G)$ is a quasi-topological group, and it is $T_1$ if and only if $G$ is center-free.
\end{enumerate}
\end{lemma}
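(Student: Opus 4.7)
The plan is to prove parts (3) and (4), since (1) and (2) are explicitly cited from \cite{secondo:survey} and \cite{verbal:functions}. The key observation that drives everything is a careful analysis of which subbase closed sets contain the identity $e$.

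For part (3), recall from \eqref{W_cen} that a subbase of $\mathfrak C_G$-closed sets is $\{\emptyset\}\cup\{hC_G(g)\mid h,g\in G\}$. I would first note that for a subbase closed set $hC_G(g)$, one has $e\in hC_G(g)$ if and only if $h\in C_G(g)$, in which case $hC_G(g)=C_G(g)$. Therefore the subbase closed sets containing $e$ are precisely the centralizers $\{C_G(g)\mid g\in G\}$, and their intersection is $\bigcap_{g\in G}C_G(g)=Z(G)$. To conclude that $Z(G)$ is the $\mathfrak C_G$-closure of $\{e\}$, I would argue that any $\mathfrak C_G$-closed set $C$ containing $e$ has the form $C=\bigcap_i\bigcup_{j=1}^{n_i}B_{i,j}$ with each $B_{i,j}$ a subbase closed set; for each $i$ there is some $j$ with $e\in B_{i,j}$, and by the observation above $Z(G)\subseteq B_{i,j}$, hence $Z(G)\subseteq C$.

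For part (4), I would verify continuity of left translations, right translations and inversion by checking that the preimage of every subbase closed set is again a subbase closed set, using the dual description in \eqref{W_cen}. For left translation $L_a(x)=ax$, one immediately has $L_a^{-1}(hC_G(g))=a^{-1}hC_G(g)$, which is of the form $h'C_G(g)$. For inversion $\iota(x)=x^{-1}$, using that $C_G(g)$ is a subgroup, $\iota^{-1}(hC_G(g))=(hC_G(g))^{-1}=C_G(g)h^{-1}$, a subbase closed set by the second description in \eqref{W_cen}. For right translation $R_a(x)=xa$, starting from $C_G(g')h$ one gets $R_a^{-1}(C_G(g')h)=C_G(g')ha^{-1}$, again of the required form. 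Hence $(G,\mathfrak C_G)$ is a quasi-topological group.

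Finally, for the $T_1$-equivalence: if $G$ is center-free, part (3) gives that $\{e\}=Z(G)$ is $\mathfrak C_G$-closed, and since $\mathfrak C_G$ is translation invariant (being a quasi-topological group topology), each singleton $\{g\}=L_g(\{e\})$ is closed, so $\mathfrak C_G$ is $T_1$. Conversely, if $\mathfrak C_G$ is $T_1$, then $\{e\}$ is closed, so by part (3), $Z(G)=\{e\}$. The main subtlety to keep in mind throughout is switching fluently between the two descriptions $hC_G(g)$ and $C_G(g')h$ of the subbase closed sets provided in \eqref{W_cen}; without this, verifying continuity of right translations and inversion would not land inside the subbase.
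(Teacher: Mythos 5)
Your proof is correct and follows essentially the same route as the paper: for (3) you identify the subbase closed sets containing $e$ as exactly the centralizers (the paper states this in one line and you fill in the subbase-to-closure bookkeeping), and for (4) you verify continuity of translations and inversion on subbase closed sets using the two equivalent descriptions in \eqref{W_cen}, exactly as the paper's proof implicitly relies on. The added detail (in particular, using $hC_G(a)=C_G(hah^{-1})h$ to handle inversion and right translations) is a faithful expansion of the paper's ``obviously continuous'' step, not a different argument.
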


\begin{proof}
3. Since $a C_G (g) \ni e$ precisely when $a C_G (g) = C_G (g)$, the $\mathfrak{C}_G$-closure of $\{e\}$ coincides with $Z(G) = \bigcap_{g\in G}C_G(g)$. 

4. Obviously translations and inversion are continuous due to the properties of the family (\ref{W_cen}) of Example \ref{cen:topo}. This proves that $(G,\mathfrak C_G)$ is a quasi-topological group.  

According to 3., if $(G,\mathfrak C_G)$ is $T_1$, then $G$ is center-free. 
On the other hand, if $Z(G)$ is trivial, then $\{e\} = Z(G) = \bigcap_{g\in G}C_G(g)$ is $\mathfrak C_G$-closed. 
Since $(G,\mathfrak C_G)$ is a quasi-topological group, all singletons are $\mathfrak C_G$-closed.
\end{proof}

\begin{example}\label{zariski topology on a free group}
(a) If $F$ is a free non-abelian group, then $\mathfrak C_F = \mathfrak C_F' = \mathfrak Z_F$ by \cite[Theorem 5.35]{verbal:functions}.  
As $F$ has infinite centralizers, this topology is not cofinite, so $F$ is $\zar$-Noetherian and not $\zar$-cofinite. On the other hand, 
free groups  satisfy the cancellation law, so WCL as well. Thus  $\mathfrak Z_{mon,F} = \cof_F$. 

(b) If $G=\mathbb H_K$, for a field $K$ of characteristic 0, as in Example \ref{Esempio2}(a), then $\mathfrak C_G' \ne \mathfrak C_G' = \mathfrak Z_G$
(\cite[\S 5.4.2]{verbal:functions}), while $\mathfrak Z_{mon,G} = \cof_G$, by Theorem \ref{Super_Main} and Example \ref{Esempio2}. 

\end{example}

\subsection{$\mathfrak C'$-cofinite groups}\label{sec:structure:C'cofinite}

Every abelian group $G$ satisfies $\mathfrak C_G' = \cof_G$, as well as every finite group. Here comes a family of examples of $\mathfrak C'$-cofinite groups. 

\begin{example}\label{ex:Tarski:monsters}
Let $p$ be a fixed prime number. A \emph{Tarski Monster group for $p$} is an infinite group $G$ all of whose nontrivial subgroups have $p$ elements, so in particular $G$ has exponent $p$. In $1979$, Ol$'$shanskij showed that such groups exist for every prime $p$ that is big enough (see his papers \cite{Ol'shanski79, Ol'shanski81}).
By the first part of Proposition \ref{mon:cof:atr:or:prime:exp}, Tarski monster groups are ${\mathfrak Z}_{mon}$-cofinite. 

 All Tarski monster groups are obviously $\mathfrak C'$-cofinite, 
 while those constructed so far are non-topologizable  (\cite{Olshanski2021}), so ${\mathfrak M}$-discrete. But since every Tarski monster $T$ is countable, one 
 has $\zar_T=\mar_T$, so these Tarski monsters $T$ are also ${\mathfrak Z}$-discrete, hence $T \not \in \C$. Therefore, they
witness the fact that $\mathfrak C'_T = {\mathfrak Z}_{mon,T} = \cof_T$ does not imply $\zar_T= \cof_T$, i.e., the class $\C_{cen}\cap \C_{mon}$ 
properly contains $\C$. 
\end{example}

 We collect some immediate consequences of Lemma \ref{centralizer:top:productive}. In the sequel, for $g \in G$, we briefly denote by $w_g$ the word $g^x = x^{-1} g x \in G[x]$.
    
\begin{proposition}\label{fact:CG:cofinite}
Let $G$ be an infinite non-abelian $\mathfrak C'$-cofinite group. Then the following hold:
\begin{itemize}
  \item[(1)] $Z(G)$ is finite;
  \item[(2)] $G$ has no infinite abelian subgroups; 
  \item[(3)] if $g \in G \setminus Z(G)$, then $C_G(g)$ is finite, so $[G:C_G(g)]$ is infinite and $g$ has infinitely many conjugates;
  \item[(4)] every subgroup $H$ of $G$ satisfies $\mathfrak C_H \leq \mathfrak C_G \restriction _H \leq \cof_G \restriction _H = \cof _H$ by Lemma  \ref{centralizer:top:productive}(2), so $H$ is $\mathfrak C'$-cofinite;
  \item[(5)] $G$ is torsion;
  \item[(6)] the commutator function $f_{ c_g } : (G, \mathfrak C_G) \to (G, \mathfrak C_G)$ and the function $f_{w_g} \colon (G, \mathfrak C_G) \to (G, \mathfrak C_G)$ are continuous for every $g \in G$. 
\end{itemize}
\end{proposition}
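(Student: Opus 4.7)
\medskip

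\noindent\textbf{Proof proposal.} The strategy is to extract everything from a single observation: since $\mathfrak{C}_G \leq \mathfrak{C}'_G = \cof_G$, every $\mathfrak{C}_G$-closed set is $\cof_G$-closed, so each centralizer $C_G(g)$ is either finite or equal to $G$. Equivalently, for every $g \in G$, either $g \in Z(G)$ (giving $C_G(g)=G$) or $C_G(g)$ is finite. This yields (3) at once, and since $G$ is non-abelian we can pick $g \notin Z(G)$ and obtain $Z(G) \subseteq C_G(g)$ finite, proving (1). (The index $[G : C_G(g)]$ in (3) is then infinite because $G$ is infinite while $C_G(g)$ is finite, and conjugation on $g$ gives a bijection between $G/C_G(g)$ and the conjugacy class of $g$.)

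For (2), suppose $H$ is an infinite abelian subgroup. Since $Z(G)$ is finite by (1), pick $h \in H \setminus Z(G)$; then $H \subseteq C_G(h)$ by abelianness, but $C_G(h)$ is finite by (3), contradicting $|H|=\infty$. Part (5) is then immediate: if some $g \in G$ had infinite order, $\langle g \rangle$ would contradict (2). Part (4) is a straightforward chain of inequalities: by Lemma \ref{centralizer:top:productive}(2), $\mathfrak C_H \leq \mathfrak C_G \restriction_H$; by the hypothesis $\mathfrak C'_G=\cof_G$ we have $\mathfrak C_G \leq \cof_G$, whence $\mathfrak C_G\restriction_H \leq \cof_G\restriction_H = \cof_H$; thus $\mathfrak C'_H=\mathfrak C_H\vee \cof_H = \cof_H$.

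The only part requiring a small computation is (6). Since $\mathfrak C_G$ is the initial topology of the family $\{f_w : G \to (G,\cof_G)\}_{w\in\mathcal W_{cen}}$, a self-map $f : G \to G$ is $\mathfrak C_G$-$\mathfrak C_G$-continuous iff for each $w=axa^{-1}\in \mathcal W_{cen}$ the composition $f_w\circ f$ has all fibers $\mathfrak C_G$-closed (finite unions of fibers cover every closed set of $\cof_G$). For $f = f_{w_g}$ (so $f(x)=x^{-1}gx$), the fiber of $f_w\circ f$ at $t$ is $\{x : x^{-1}gx = a^{-1}ta\}$; if nonempty with witness $x_0$, a direct manipulation shows this fiber equals the right coset $C_G(g)x_0$, which appears in \eqref{W_cen} and is therefore $\mathfrak C_G$-closed. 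For $f = f_{c_g}$ the analogous fiber is $\{x : [x,g]=a^{-1}ta\}$, and from $[x,g]=[x_0,g]$ one gets $x_0^{-1}x \in C_G(g)$, so the fiber is the left coset $x_0 C_G(g)$, again in the closed subbase \eqref{W_cen}.

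The main (very mild) obstacle is keeping track of the left/right coset in the fiber computation for (6); everything else reduces cleanly to the dichotomy ``$C_G(g)$ is either finite or all of $G$'' forced by $\mathfrak C'_G=\cof_G$.
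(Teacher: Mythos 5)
Your proposal is correct and follows essentially the same route as the paper: everything is driven by the dichotomy that each $\mathfrak C_G$-closed set (in particular each centralizer) is either finite or all of $G$, and items (1)--(5) are derived exactly as in the paper's proof. The only cosmetic difference is in (6), where you compute the fibers of $f_w\circ f_{w_g}$ and $f_w\circ f_{c_g}$ directly and identify them as cosets of $C_G(g)$ from the subbase \eqref{W_cen}, whereas the paper obtains the same conclusion by writing $f_{w_g}^{-1}(h)=\iota\bigl(f_{xgx^{-1}}^{-1}(h)\bigr)$ and invoking that inversion is a homeomorphism of the quasi-topological group $(G,\mathfrak C_G)$; both arguments are valid.
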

\begin{proof}  (1).  Follows  immediately from the definition. 

(2). Assume that $H$ is an infinite abelian subgroup of $G$. By (1),  there exists $h \in H \setminus Z(G)$.
Then $C_G(h)\ne G$ must be finite, as $G$ is $\mathfrak C'$-cofinite. Since obviously $H \leq C_G(h)$, this contradicts 
our assumption that $H$ is infinite. 

(3) is immediate and (4) follows from (3). 

(5). The group $G$ cannot contain infinite cyclic subgroups $H$. Indeed, such an $H$ is $\mathfrak C'$-cofinite, by (3). Hence, $Z(H)$ must be finite, by (1).
This contradicts the fact that $H$ is abelian. The assertion follows also  from the fact that every element $g \in G$ either lies in $Z(G)$, which is finite, or $C_G(g) \neq G$, hence $C_G(g)$ is finite. 

(6). It is enough to prove only the second assertions, since $f_{ c_g }$ is a composition of $f_{w_g }$ and a left translation, by $g^{-1}$. 
To this aim it is sufficient to prove that $f_{w_g }^{-1}(h)$ is $\mathfrak C_G$-closed for every $h \in G$. 
Fix an element $h \in G$ and note that $w = {x g x^{-1}}\in \mathcal{W}_{cen}$ (the family (\ref{W_cen}) introduced in Example \ref{sec:centralizer:top}). Hence, $f_{w}^{-1}(h)$ is $\mathfrak C_G$-closed. By Lemma \ref{centralizer:top:productive}(4), the pair $(G, \mathfrak C_G) $ is a quasi-topological group, so the inversion $\iota : (G, \mathfrak C_G) \to (G,\mathfrak C_G)$ is homeomorphism. Therefore $f_{w_g }^{-1}(h) = \iota(f_{w}^{-1}(h))$ is $\mathfrak C_G$-closed as well. 
\end{proof}

\begin{remark}\label{counterExTY}
(a) For an infinite non-abelian $\mathfrak C'$-cofinite group and $g\in G \setminus Z(G)$, the centralizer $C_G(g)$ is finite and contains $Z(G)$, so $C_G(g)$ is a finite union of cosets of $Z(G)$. As a consequence, the family of $\mathfrak C_G$-closed sets consists of $G$ and finite unions of cosets of $Z(G)$, i.e. $\mathfrak C_G$ is the initial topology of the canonical map $G \to (G/Z(G), \cof_{ G/Z(G) })$.

(b) By Proposition \ref{fact:CG:cofinite}(3), an infinite non-abelian $\mathfrak C'$-cofinite group strongly fails to be an FC-group.

(c) The almost torsion-free (actually, WCL) group $H$ in Example \ref{Esempio2} has infinite abelian subgroups, so $H \not \in \C_{cen}$ by Proposition \ref{fact:CG:cofinite}(2), and consequently $H \not \in \C$. This shows that Theorem \ref{abelian:Z:cofinite} fails in the non-abelian case (even for nilpotent groups of class 2). 
\end{remark}


Obviously, every central subgroup is normal. For finite subgroups of an infinite $\mathfrak C'$-cofinite group  we prove now a stronger version of the converse if this property. 
 

\begin{proposition}\label{fin:normalizer}
Let be a $H$ be a finite subgroup of an infinite $\mathfrak C'$-cofinite group $G$. Then either $N_G(H)$ is finite, or $H\leq Z(G)$. In particular, $H$ is normal if and only if it is central.
\end{proposition}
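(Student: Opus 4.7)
The plan is to exploit the classical $N/C$ lemma together with Proposition \ref{fact:CG:cofinite}(3), which tells us that elements of $G \setminus Z(G)$ have finite centralizers. First, if $G$ is abelian there is nothing to prove since $H \leq G = Z(G)$ and $\mathfrak{C}'_G = \cof_G$ holds trivially, so we may assume $G$ is non-abelian. In that case Proposition \ref{fact:CG:cofinite} applies: $Z(G)$ is finite and every $g \in G \setminus Z(G)$ satisfies $|C_G(g)| < \infty$.

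The core step is to observe that $N_G(H)$ acts on the finite group $H$ by conjugation. This yields a homomorphism $\varphi \colon N_G(H) \to \aut(H)$ whose kernel is $C_G(H) = \bigcap_{h \in H} C_G(h)$. Since $H$ is finite, so is $\aut(H)$, and consequently
\[
[N_G(H) : C_G(H)] \leq |\aut(H)| < \infty.
\]
Hence the finiteness of $N_G(H)$ is equivalent to the finiteness of $C_G(H)$.

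Now I distinguish the two cases. If $H \leq Z(G)$, the first alternative of the theorem is witnessed and we are done. Otherwise, pick any $h \in H \setminus Z(G)$; by Proposition \ref{fact:CG:cofinite}(3) the centralizer $C_G(h)$ is finite, and since $C_G(H) \leq C_G(h)$, we conclude $|C_G(H)| < \infty$, hence $|N_G(H)| < \infty$ by the displayed inequality above. This proves the dichotomy.

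For the ``in particular'' clause: if $H$ is central then it is clearly normal. Conversely, suppose $H$ is normal in $G$, so $N_G(H) = G$ is infinite; the dichotomy just established forces $H \leq Z(G)$. I do not foresee any genuine obstacle here; the only delicate point is to remember that $C_G(H)$ is contained in $C_G(h)$ for any single $h \in H$, so a single non-central element of $H$ already suffices to trigger Proposition \ref{fact:CG:cofinite}(3).
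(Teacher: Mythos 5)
Your proof is correct, but it takes a different route from the paper's. The paper proves the dichotomy topologically: it writes $N_G(H)=\bigcap_{g\in H} f_g^{-1}(H)$, notes that each $f_g$ is $\mathfrak C_G$-continuous and $H$ is $\mathfrak C_G$-closed, concludes that $N_G(H)$ is $\mathfrak C_G$-closed and hence (by $\mathfrak C'$-cofiniteness) either finite or all of $G$, and in the latter case deduces centrality of $H$ from the fact that each $h\in H$ then has finitely many conjugates, so $C_G(h)$ has finite index and must equal $G$. You instead bypass the topology entirely: you combine the classical $N/C$ embedding $N_G(H)/C_G(H)\hookrightarrow \aut(H)$ with Proposition \ref{fact:CG:cofinite}(3), so that a single non-central $h\in H$ forces $C_G(H)\leq C_G(h)$ to be finite and hence $N_G(H)$ to be finite. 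Both arguments are sound; yours is more elementary and self-contained (modulo the standard $N/C$ lemma and your correct separate treatment of the abelian case, where Proposition \ref{fact:CG:cofinite} does not apply), while the paper's closure argument fits its running theme of exploiting the centralizer topology and additionally exhibits the slightly sharper structural fact that an infinite normalizer of a finite subgroup must be the whole group -- though this also follows from your version, since a central subgroup is normal. The only cosmetic slip is that you call $H\leq Z(G)$ the ``first alternative'' when it is the second as stated; this has no mathematical consequence.
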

\begin{proof} 
As $G$ is $\mathfrak C'$-cofinite, $f_g \colon (G, \mathfrak C_G) \to (G, \mathfrak C_G)$ is continuous for every $g \in G$, by  Proposition \ref{fact:CG:cofinite}(5). 

The normalizer $N_G(H)$ of a subgroup $H$ of $G$ is the maximum among the subgroups of $G$ in which $H$ is normal, so
\begin{equation}\label{eq:normalizer}
N_G(H) = \{x \in G \mid \forall g \in H \quad g^x \in H \} = \bigcap_{g \in H} \{x \in G \mid f_g(x) \in H \}  = \bigcap_{g \in H} f_g^{-1} (H).
\end{equation}

As $H$ is obviously closed in $\mathfrak C_G$, equation (\ref{eq:normalizer}) shows that $N_G(H)$ is closed in $\mathfrak C_G$ too, so either $N_G(H)$ is finite, or $N_G(H) = G$. In the second case, $H$ is normal, so each of its elements $h\in H$ has only finitely many conjugates in $G$, as they all lie in the finite group $H$. Then $C_G(h)$ has finite index in $G$, so that $C_G(h) =G$ and $h \in Z(G)$.
\end{proof}

For the sake of convenience we give separately a particular case of Proposition \ref{fin:normalizer} for $\zar$-cofinite groups. 

\begin{corollary}\label{finite:centralizer}
A finite subgroup $H$ of an infinite $\zar$-cofinite group $G$ 
is either normal, or $N_G(H)$ is finite. Hence, $H$ is normal if and only if it is central.
\end{corollary}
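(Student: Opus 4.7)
The plan is to simply observe that any $\zar$-cofinite group is automatically $\mathfrak{C}'$-cofinite, so that the whole statement is a direct instance of Proposition~\ref{fin:normalizer}. First I would recall from \S\ref{sec:partial zariski} that $\mathfrak{C}'_G$ is a partial Zariski topology, hence $\mathfrak{C}'_G \leq \zar_G$, and that $\mathfrak{C}'_G \geq \cof_G$ (because $\cof_G$ itself is a partial Zariski topology, cf.\ Example~\ref{cofin:is:partial:Zariski}, and $\mathfrak{C}'_G = \mathfrak{C}_G \vee \cof_G$ by its very definition).

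The key step, and really the only one, is the squeeze: assuming $\zar_G = \cof_G$, the chain of inequalities
\[
\cof_G \leq \mathfrak{C}'_G \leq \zar_G = \cof_G
\]
forces $\mathfrak{C}'_G = \cof_G$, so $G$ is $\mathfrak{C}'$-cofinite. With that in hand, Proposition~\ref{fin:normalizer} applied to the finite subgroup $H \leq G$ yields at once that either $N_G(H)$ is finite or $H \leq Z(G)$, which is exactly the dichotomy claimed. The final equivalence ``$H$ normal $\Leftrightarrow$ $H$ central'' is then immediate: centrality trivially implies normality, while conversely if $H$ is normal then $N_G(H) = G$ is infinite, so the first alternative of Proposition~\ref{fin:normalizer} is excluded and $H \leq Z(G)$ must hold.

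There is no real obstacle here; the content of the corollary is entirely carried by Proposition~\ref{fin:normalizer}, and the only thing to verify is the (routine) inclusion $\mathfrak{C}'_G \leq \zar_G$, which is built into the definition of the partial Zariski topologies.
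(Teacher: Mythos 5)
Your proof is correct and follows exactly the route the paper intends: the paper states this corollary as "a particular case of Proposition \ref{fin:normalizer}," relying on the observation (recorded as \eqref{c'=mon=cof}) that $\zar_G=\cof_G$ forces $\mathfrak{C}'_G=\cof_G$ via the squeeze $\cof_G\leq\mathfrak{C}'_G\leq\zar_G$. Your handling of the final equivalence (normal $\Rightarrow$ $N_G(H)=G$ infinite $\Rightarrow$ $H\leq Z(G)$) is also the intended one, so there is nothing to add.
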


Now we prove that the central ascending series $Z(G) \leq Z_2(G) \leq Z_3(G) \leq \ldots$ of a $\mathfrak C'$-cofinite non-abelian group $G$ stabilizes after at most two steps, i.e. the second center $Z_2(G)$ coincides with the first center $Z(G)$.

\begin{theorem}\label{z2=z1}\label{no:solvable}
If $G$ is an infinite non-abelian $\mathfrak C'$-cofinite group. Then the following hold true.

(a) $Z_2(G) = Z(G)$. In particular, $G$ has no infinite  nilpotent subgroups. 

(b) $G^{(n)}$ is infinite for every $n$. In particular, $G$ has no infinite solvable subgroups.
\end{theorem}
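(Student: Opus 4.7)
For part (a), the plan is to show $Z_2(G) \subseteq Z(G)$ directly. Fix $z \in Z_2(G)$, so by definition $[z,g] \in Z(G)$ for every $g \in G$. Using the standard commutator identity $[z,gh] = [z,g]\cdot g[z,h]g^{-1}$ together with the centrality of $[z,h]$, one sees that the map $\phi_z : G \to Z(G)$ defined by $\phi_z(g) = [z,g]$ is a group homomorphism whose kernel is exactly $C_G(z)$. Since $Z(G)$ is finite by Proposition \ref{fact:CG:cofinite}(1), the image of $\phi_z$ is finite, so $[G : C_G(z)]<\infty$ and thus $C_G(z)$ is infinite. By Proposition \ref{fact:CG:cofinite}(3), this forces $z \in Z(G)$, yielding $Z_2(G) = Z(G)$. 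For the corollary about nilpotent subgroups, any infinite subgroup $H \leq G$ is itself $\mathfrak{C}'$-cofinite by Proposition \ref{fact:CG:cofinite}(4) and non-abelian by Proposition \ref{fact:CG:cofinite}(2); applying (a) to $H$ gives $Z(H/Z(H)) = \{e\}$, so an easy induction shows $Z_n(H) = Z(H) \neq H$ for every $n \geq 1$, and $H$ cannot be nilpotent.

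For (b), I would induct on $n$, with $n=0$ trivial. Assume $H := G^{(n)}$ is infinite. Because $G$ has no infinite abelian subgroup (Proposition \ref{fact:CG:cofinite}(2)) and every subgroup of $G$ is $\mathfrak{C}'$-cofinite (Proposition \ref{fact:CG:cofinite}(4)), the group $H$ is simultaneously infinite, non-abelian and $\mathfrak{C}'$-cofinite. If $G^{(n+1)} = H'$ were finite, then $H'$ would be a finite normal subgroup of $H$ whose normaliser $N_H(H')=H$ is infinite; Proposition \ref{fin:normalizer} applied inside $H$ would then force $H' \leq Z(H)$. This makes $H$ of nilpotency class at most $2$, so $H = Z_2(H) = Z(H)$ by part (a) applied to $H$, contradicting the non-abelianness of $H$. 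Hence $G^{(n+1)}$ is infinite, completing the induction.

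For the solvable corollary, any infinite solvable subgroup $S \leq G$ is non-abelian by Proposition \ref{fact:CG:cofinite}(2) and $\mathfrak{C}'$-cofinite by Proposition \ref{fact:CG:cofinite}(4); running the argument of (b) inside $S$ shows $S^{(n)}$ is infinite for every $n$, contradicting the eventual triviality of the derived series of $S$. The main obstacle, I expect, is part (a): the key move is recognising that on $Z_2(G)$ the commutator $g \mapsto [z,g]$ linearises into a homomorphism into the finite centre $Z(G)$, which then clashes with the centraliser dichotomy of Proposition \ref{fact:CG:cofinite}(3). Once this is in place, part (b) and both corollaries follow by bootstrapping via Propositions \ref{fact:CG:cofinite} and \ref{fin:normalizer}.
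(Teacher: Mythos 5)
Your proof is correct, and part (a) takes a genuinely different and more direct route than the paper. The paper's argument for (a) is topological: it first shows that $Z_2(G)$ is $\mathfrak C_G$-closed (as $\bigcap_{g} f_{c_g}^{-1}(Z(G))$ with each $f_{c_g}$ continuous and $Z(G)$ closed), then invokes the cofiniteness dichotomy --- if $Z_2(G)$ is infinite it equals $G$, so $G$ has class $2$ and, having finite centre, is an FC-group by Corollary \ref{nilpo2:finite-center}, contradicting Proposition \ref{fact:CG:cofinite}(3); if $Z_2(G)$ is finite, Proposition \ref{fin:normalizer} forces $Z_2(G)\leq Z(G)$. You instead work element-wise: for $z\in Z_2(G)$ the map $g\mapsto [z,g]$ lands in $Z(G)$ and linearises into a homomorphism (the identity $[z,gh]=[z,g]\cdot {}^{g}[z,h]$ plus centrality of $[z,h]$), so $[G:C_G(z)]\leq |Z(G)|<\infty$, whence $C_G(z)$ is infinite and $z\in Z(G)$ by the centraliser dichotomy. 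This is the mirror image of the paper's Fact \ref{commutator:bilinear:on:nilpo2:groups} (bilinearity in the other variable, localised to a single element of $Z_2(G)$ rather than assuming class $\leq 2$ globally), and it buys you a shorter proof that needs neither the closedness of $Z_2(G)$, nor the FC-group detour, nor Proposition \ref{fin:normalizer} in part (a). Your part (b) and both ``in particular'' statements follow the paper's argument essentially verbatim: induction on the derived series, with Proposition \ref{fin:normalizer} ruling out a finite non-central $H'$ and part (a) ruling out class $2$, all bootstrapped through Proposition \ref{fact:CG:cofinite}(2) and (4).
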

\begin{proof} (a) Let $g \in G$. The commutator function $f_{ c_g } : (G, \mathfrak C_G) \to (G, \mathfrak C_G)$ is continuous, by Proposition \ref{fact:CG:cofinite}(5).  

By definition, $Z_2(G)$ is the preimage under the canonical projection $G \to G/Z(G)$ of the center $Z\bigl(  G/ Z(G)  \bigr)$. Equivalently, 
\begin{equation}\label{def:Z2}
Z_{ 2 } (G) = \bigcap_{g \in G} \{ x \in G \mid [g,x] \in Z(G) \} = 
									\bigcap_{g \in G} f_{ c_g } ^{-1} ( Z(G) ).
\end{equation}
As $Z(G)$ is $\mathfrak C_G$-closed by Lemma \ref{centralizer:top:productive}(3), equation (\ref{def:Z2}) yields that $Z_2(G)$ is $\mathfrak C_G$-closed as well.

Assume that $Z_2(G)$ is infinite, so $Z_2(G) = G$ and $G$ has nilpotency class $2$. Then
Corollary \ref{nilpo2:finite-center} applies, and $G$ is an FC-group -- this contradicts Remark \ref{counterExTY}(c). Then $Z_2(G)$ is finite. 
Proposition applies \ref{fin:normalizer} to $Z_2(G)$, 
to conclude that $Z_2(G) \leq Z(G)$, hence $Z_2(G) = Z(G)$.

Now, if $H$ is an infinite subgroup of $G$, then $H$ is $\mathfrak C'$-cofinite (by Proposition \ref{fact:CG:cofinite}(4)) and  
non-abelian, by Proposition \ref{fact:CG:cofinite}(2). Then the above argument gives $Z_2(H) = Z_1(H) \neq H$, so $H$ is not nilpotent. 

(b) To prove the first assertion we argue by induction. 
For $n=1$ the subgroup $G'$ is not central, since $G$ cannot have nilpotency class $2$, by item (a). As $G'$ is normal, 
we conclude that $G'$ is infinite, by Proposition \ref{fin:normalizer}. Moreover, $G'$ is infinite $\mathfrak C'$-cofinite (by  Proposition \ref{fact:CG:cofinite}(3))
and non-abelian, by Proposition \ref{fact:CG:cofinite}(2). This concludes the case $n=1$. The same argument allows us to conclude that $G^{(n+1)}$ is infinite, non-abelian
and $\mathfrak C'$-cofinite, whenever $G^{(n)}$ is. This proves the first assertion. In particular, $G$ is not  solvable. 

Since every infinite subgroup $H$ of $G$ is $\mathfrak C'$-cofinite and is not abelian by Proposition \ref{fact:CG:cofinite}(2), we deduce by the above 
argument that $H$ is not solvable. 
\end{proof}

By  Proposition \ref{fact:CG:cofinite}(5), an infinite non-abelian $\mathfrak C'$-cofinite group is torsion. Now we prove that $G$ is bounded, of prime exponent bigger than $3$.

\begin{theorem}\label{centr:cof:bounded}\label{zar-cofinite:prime:exp}
If $G$ is an infinite non-abelian $\mathfrak C'$-cofinite group, then $G$ is not almost torsion-free. Moreover, if $G$ is also ${\mathfrak Z}_{mon}$-cofinite, then it has a prime exponent $p \geq 5$.
\end{theorem}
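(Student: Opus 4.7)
The plan is to split the argument into the two assertions and exploit the cofiniteness of $\mathfrak C'_G$ via the consequences already collected in Proposition \ref{fact:CG:cofinite} and Theorem \ref{no:solvable}. For the first assertion, I would argue by contradiction: suppose $G$ is almost torsion-free. By Proposition \ref{fact:CG:cofinite}(5) the group $G$ is torsion, and since it is non-abelian, $Z(G) \subsetneq G$, so we may pick $g \in G \setminus Z(G)$ of finite order $n \in \N_+$. Proposition \ref{fact:CG:cofinite}(3) then provides infinitely many conjugates of $g$, and each conjugate has the same order $n$. Hence $G[n]$ contains the infinite conjugacy class of $g$, contradicting $|G[n]| < \infty$. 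This short argument reduces the first assertion to the single observation that conjugation preserves order.

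For the second assertion, I would first invoke Theorem \ref{Super_Main}: since $G$ is infinite and $\mathfrak Z_{mon}$-cofinite, either $G$ has prime exponent, or $G$ is WCL. Since WCL implies almost torsion-free (as noted right after Definition \ref{WCL:def}), the latter option is excluded by the first assertion just established. Therefore $G$ has a prime exponent $p$, and it remains to rule out $p = 2$ and $p = 3$.

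The case $p = 2$ is a standard one-line argument: every element is an involution, so from $(ab)^2 = e$ one deduces $ab = b^{-1}a^{-1} = ba$, making $G$ abelian, a contradiction. The case $p = 3$ is what I expect to be the main obstacle, as it requires quoting the classical theorem of Levi--van der Waerden (1933) stating that every group of exponent $3$ is nilpotent (of class at most $3$), and in particular solvable. Once this fact is at hand, the contradiction is immediate from Theorem \ref{no:solvable}(b), which ensures that $G^{(n)}$ is infinite for every $n$, so $G$ cannot be solvable. This rules out $p = 3$ and yields $p \geq 5$.
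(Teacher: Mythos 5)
Your proof is correct, and its overall structure (non-central element of finite order $n$ has an infinite conjugacy class inside $G[n]$; then prime exponent via the $\mathfrak Z_{mon}$-cofinite dichotomy; then exclude $p=2,3$) matches the paper's. The only genuine divergence is the case $p=3$. You quote the full Levi--van der Waerden theorem that groups of exponent $3$ are nilpotent of class at most $3$, hence solvable, and then contradict Theorem \ref{no:solvable}(b). That is a valid argument (and Theorem \ref{no:solvable} is proved independently of this statement, so there is no circularity), but it imports a nontrivial classical result as a black box. The paper instead uses only the much weaker Levi--van der Waerden observation that in a group of exponent $3$ every element commutes with all of its conjugates, which it verifies in a three-line computation; combined with Proposition \ref{fact:CG:cofinite}(3), a non-central $g$ then has infinitely many conjugates all lying in the finite set $C_G(g)$, an immediate contradiction. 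So the paper's route is self-contained and elementary where yours leans on a deeper theorem; conversely, your route requires no computation once the classical fact is granted. Both are sound.
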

\begin{proof}
Since $G$ is non-abelian, there exists $g \notin Z(G)$. By Proposition \ref{fact:CG:cofinite}(5), $g$ has finite order, say $n$, and then all the infinitely many conjugates of $g$ have also order $n$, so that $G[n]$ is infinite. In particular, $G$ is not almost torsion-free. 
%

Now assume that $G$ is also ${\mathfrak Z}_{mon}$-cofinite. Then we can apply Proposition \ref{mon:cof:atr:or:prime:exp} to deduce that the exponent of $G$ is a prime number $p$, since 
$G$ is not almost torsion-free. Obviously $p \neq 2$ as $G$ is not abelian. 

It is an old observation of Levi and van der Waerden that in a group of exponent $3$ every element commutes with all its conjugates. Indeed, if $g^3=e_G$ for every $g \in G$, and $x,y \in G$, then
\begin{multline*}
xyxy^{-1} = (xy )^{-2} xy^{-1} = ( y^{-1}x^{-1} y^{-1}x^{-1} ) xy^{-1} = y^{-1}x^{-1}y^{-2} = y^{-1} x^{-1} y = \\y^{-1} (x^{-1} y )^{-2} 
= y^{-1} ( y^{-1} x y^{-1} x ) =y^{-2} xy^{-1} x = yxy^{-1} x.
\end{multline*}
Taking a non-central element $g$ of $G$, we see that it has infinitely many conjugates, while $C_G(g)$ is finite, so that $p \neq 3$.
\end{proof}

As a consequence of Proposition \ref{mon:cof:atr:or:prime:exp} and Theorem \ref{centr:cof:bounded}, for an infinite non-abelian $\mathfrak C'$-cofinite group $G$ the two following properties are equivalent:
\begin{itemize}
\item $G$ is ${\mathfrak Z}_{mon}$-cofinite;
\item $G$ has a prime exponent $p \geq 5$.
\end{itemize}

\begin{corollary}\label{Main:prime:exp} 
If $G$ is an infinite non-abelian $\zar$-cofinite group, then $G$ has a prime exponent $p \geq 5$.
\end{corollary}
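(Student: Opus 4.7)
The plan is to derive this as a direct consequence of Theorem \ref{centr:cof:bounded} once we observe that $\mathfrak Z$-cofiniteness is strong enough to force simultaneously $\mathfrak C'$-cofiniteness and $\mathfrak Z_{mon}$-cofiniteness. The decisive input is the comparison of the topologies: since both $\mathfrak C'_G$ and $\mathfrak Z_{mon,G}$ are partial Zariski topologies, they satisfy $\mathfrak C'_G \leq \mathfrak Z_G$ and $\mathfrak Z_{mon,G} \leq \mathfrak Z_G$. Together with the fact that each of them lies above $\cof_G$ (for $\mathfrak Z_{mon,G}$ this is Example \ref{cofin:is:partial:Zariski}, and for $\mathfrak C'_G$ it is by construction, since $\mathfrak C'_G = \mathfrak C_G \vee \cof_G$), the assumption $\mathfrak Z_G = \cof_G$ squeezes both topologies to equal $\cof_G$ as well; this is precisely the content of \eqref{c'=mon=cof}.

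With that observation, the proof is immediate. First I would record that if $G$ is $\mathfrak Z$-cofinite, then $G$ belongs to $\C_{cen} \cap \C_{mon}$. Then I would invoke Theorem \ref{centr:cof:bounded}: an infinite non-abelian group that is simultaneously $\mathfrak C'$-cofinite and $\mathfrak Z_{mon}$-cofinite has prime exponent $p \geq 5$. Applying this to our infinite non-abelian $\mathfrak Z$-cofinite $G$ gives the conclusion.

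No genuine obstacle is expected here, since both the containment $\C \subseteq \C_{cen} \cap \C_{mon}$ and Theorem \ref{centr:cof:bounded} have already been established. The only thing worth stating explicitly in the write-up is the one-line topological comparison that places $G$ into the hypothesis of Theorem \ref{centr:cof:bounded}; everything else is a citation.
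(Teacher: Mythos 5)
Your proposal is correct and follows exactly the paper's route: the corollary is stated as an immediate consequence of Theorem \ref{centr:cof:bounded}, using precisely the squeeze \eqref{c'=mon=cof} to place a $\mathfrak Z$-cofinite group in $\C_{cen}\cap\C_{mon}$. Nothing is missing.
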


Here follows a particular case of Corollary \ref{finite:centralizer}, applied to the finite non-central cyclic subgroup $H = \langle g \rangle$ (see below)
with a reinforced conclusion:

\begin{corollary}
Let $G$ be an infinite non-abelian $\zar$-cofinite group, and $g\in G\setminus Z(G)$. Then $N_G( \langle g \rangle ) = C_G(g)$. 
\end{corollary}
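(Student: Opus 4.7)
The plan is to exploit the strong structural result from Corollary \ref{Main:prime:exp}, which forces $G$ to have prime exponent $p \geq 5$. In particular, since $g \neq e$ (as $g \notin Z(G)$), the cyclic subgroup $\langle g \rangle$ has order exactly $p$, so $\operatorname{Aut}(\langle g \rangle) \cong (\Z/p\Z)^*$ is cyclic of order $p-1$.

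The inclusion $C_G(g) \subseteq N_G(\langle g \rangle)$ is immediate. For the reverse one, I would consider the conjugation action of $N := N_G(\langle g \rangle)$ on $\langle g \rangle$, which yields a group homomorphism
\[
\phi \colon N \to \operatorname{Aut}(\langle g \rangle)
\]
whose kernel is precisely $C_N(g) = C_G(g) \cap N = C_G(g)$, using the trivial inclusion just observed. It thus suffices to show that $\phi$ is identically trivial.

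This reduces to a coprimality argument: every $x \in N$ lies in $G$ and hence satisfies $x^p = e$, so $\phi(x)$ has order dividing $p$ in $\operatorname{Aut}(\langle g \rangle)$. However $|\operatorname{Aut}(\langle g \rangle)| = p-1$ is coprime to $p$, so by Lagrange's theorem $\phi(x)$ must be the identity automorphism, i.e.\ $xgx^{-1} = g$ and $x \in C_G(g)$. Hence $N \subseteq C_G(g)$, which together with the first inclusion gives the equality. I do not foresee a genuine obstacle: the real content is packaged in Corollary \ref{Main:prime:exp}, and the argument above also clarifies the remark that this statement sharpens Corollary \ref{finite:centralizer}, which a priori would only guarantee that $N_G(\langle g \rangle)$ is finite.
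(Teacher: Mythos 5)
Your proof is correct and follows essentially the same route as the paper: the conjugation homomorphism $\phi\colon N_G(\langle g\rangle)\to \operatorname{Aut}(\langle g\rangle)$ with kernel $C_G(g)$, killed by the coprimality of $p$ and $p-1$. The only (cosmetic) difference is that you apply the coprimality element-wise via $x^p=e$, whereas the paper first invokes Corollary \ref{finite:centralizer} to see that $N_G(\langle g\rangle)$ is a finite $p$-group and then argues that $|N/C|$ is a power of $p$ dividing $p-1$; your variant is marginally leaner since it never needs finiteness of the normalizer.
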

\begin{proof}  Let $N = N_G(\langle g \rangle)$. By Corollary \ref{Main:prime:exp}, $G$ has a prime exponent $p$, while Corollary \ref{finite:centralizer} implies that $N$ is finite, so $N$ is a finite $p$-group.  The action of $N$ on $\langle g \rangle$ by conjugation gives an homomorphism $\phi \colon N \to Aut (\langle g \rangle)$, with kernel $ker \ \phi = C_G(g) = C$. Since $N$ is a finite $p$-group, the same holds true for $N/C$ as well, so $|N/C| $ is a power of $p$. On the other hand, $Aut (\langle g \rangle)$ is isomorphic to the cyclic group of order $p-1$, so $|N/C| \mid p-1$, hence $|N/C| = 1$. Consequently, $N_G( \langle g \rangle ) = C_G(g)$.
\end{proof}

\begin{corollary}\label{cor:example:not:zar:cofinite}
The infinite non-abelian groups in Example \ref{abeliani e lineari sono ZNoet} are not $\mathfrak C'$-cofinite.
\end{corollary}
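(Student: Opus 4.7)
The plan is to treat each of the three classes listed in Example~\ref{abeliani e lineari sono ZNoet} separately. In each case I shall suppose, for contradiction, that $G$ is an infinite non-abelian group of the relevant type that is moreover $\mathfrak{C}'$-cofinite, and then derive a contradiction from the structural restrictions on such groups collected in \S\ref{sec:structure:C'cofinite}.

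Case (c) (abelian-by-finite) is immediate: an abelian subgroup $A\le G$ of finite index is necessarily infinite, so $G$ contains an infinite abelian subgroup, which is forbidden by Proposition~\ref{fact:CG:cofinite}(2).

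For case (b) (finitely generated abelian-by-nilpotent-by-finite) I first note that any abelian-by-nilpotent group is solvable, since nilpotent groups are solvable and an extension of a solvable group by a solvable group remains solvable. Hence $G$ contains a solvable subgroup of finite index, which is infinite because $G$ is. This contradicts Theorem~\ref{no:solvable}(b), which forbids infinite solvable subgroups in an infinite non-abelian $\mathfrak{C}'$-cofinite group.

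Case (a) (linear groups) is the most delicate one and the main obstacle. By Proposition~\ref{fact:CG:cofinite}(5) the group $G$ must be torsion. I then appeal to the classical theorem of Schur (in characteristic zero), together with its analogue in positive characteristic, stating that every torsion subgroup of $\GL_n(K)$ is locally finite. Consequently $G$ itself is an infinite locally finite group, which contradicts Corollary~\ref{no:locally:finite} (excluding infinite locally finite subgroups from any infinite non-abelian $\mathfrak{C}'$-cofinite group). Alternatively, one can combine local finiteness of $G$ with the Hall--Kulatilaka theorem to exhibit directly an infinite abelian subgroup of $G$ and invoke Proposition~\ref{fact:CG:cofinite}(2) in place of Corollary~\ref{no:locally:finite}. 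In either formulation the argument relies on the external classical fact that torsion linear groups are locally finite, which is where the real work of case (a) is hidden; the other two cases follow mechanically from the results of \S\ref{sec:structure:C'cofinite}.
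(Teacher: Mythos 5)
Your argument is correct, and for cases (b) and (c) it is essentially the paper's: the paper disposes of both at once by observing that solvable-by-finite infinite subgroups are excluded by Theorem~\ref{no:solvable}(b), while you split off (c) via Proposition~\ref{fact:CG:cofinite}(2) — an equivalent move. The real divergence is in case (a). The paper invokes the Tits alternative: $G$ contains either an infinite solvable (normal, finite-index) subgroup, killed by Theorem~\ref{no:solvable}(b), or a free subgroup of rank~$2$, killed by torsionness (Theorem~\ref{centr:cof:bounded} / Proposition~\ref{fact:CG:cofinite}(5)). You instead combine torsionness with the Burnside--Schur theorem that periodic linear groups are locally finite, and then apply Corollary~\ref{no:locally:finite} (whose proof is itself the Hall--Kulatilaka route you mention, so your ``alternative formulation'' is not really an alternative). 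Both arguments lean on a deep external fact about linear groups, so neither is more elementary; but your version has a genuine technical advantage: the dichotomy ``virtually solvable or contains a non-abelian free subgroup'' can fail for linear groups that are not finitely generated in positive characteristic (e.g.\ $\mathrm{PSL}_2(\overline{\mathbb{F}_p})$ is an infinite periodic simple linear group with neither property), whereas ``periodic linear $\Rightarrow$ locally finite'' holds over every field. So your proof covers case (a) in full generality without the finite-generation caveat implicit in the paper's appeal to Tits. It would be worth stating explicitly which form of the Burnside--Schur theorem you are using and giving a reference (e.g.\ Wehrfritz, \emph{Infinite Linear Groups}), since that is where all the work of case (a) is concentrated.
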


\begin{proof} By Theorem \ref{no:solvable}(b), every solvable-by-solvable-by-finite $\mathfrak C'$-cofinite group is either finite or abelian, so in particular this proves our statement for the groups in Example \ref{abeliani e lineari sono ZNoet}(b)-(c).

Let $G$ be an infinite non-abelian linear group, and assume by contradiction that $G$ is $\mathfrak C'$-cofinite. By the Tits alternative \cite{Tits}, $G$ has a subgroup $H$ that is either:
\begin{itemize}
\item solvable, normal, and of finite index in $G$,
\item or isomorphic to the free group on two generators.
\end{itemize} 
In particular, $H$ is infinite. The first case contradicts Theorem \ref{no:solvable}(b), while the second case contradicts Theorem \ref{centr:cof:bounded}.
\end{proof}

\begin{corollary}\label{no:locally:finite}
Let $G$ be a non-abelian $\mathfrak{C'}$-cofinite group. The infinite subgroups of $G$ are neither locally finite, nor locally solvable.
\end{corollary}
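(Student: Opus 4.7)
The plan is to reduce both claims to a single contradiction with Proposition~\ref{fact:CG:cofinite}(2), which forbids an infinite abelian subgroup. The case $G$ finite is vacuous, so I assume $G$ is infinite and fix an infinite subgroup $H\le G$. Proposition~\ref{fact:CG:cofinite}(2), (4), (5) then gives me for free: $H$ is $\mathfrak C'$-cofinite; non-abelian (otherwise $H$ itself would be the forbidden infinite abelian subgroup of $G$); and torsion. So $H$ inherits the full standing hypotheses of Proposition~\ref{fact:CG:cofinite}, plus periodicity.

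The next step is to collapse the locally solvable case into the locally finite case. Assume $H$ is locally solvable. Every finitely generated subgroup $F\le H$ is periodic (since $H$ is) and solvable (since $H$ is locally solvable), hence is a finitely generated periodic solvable group. By the classical Hall--Schmidt theorem (every finitely generated periodic solvable group is finite), $F$ is finite. Thus $H$ is locally finite, and it suffices to dispose of the locally finite case.

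For the locally finite case, I would invoke the Hall--Kulatilaka theorem: every infinite locally finite group contains an infinite abelian subgroup. Applied to the infinite locally finite group $H$, it produces an infinite abelian $A\le H\le G$, in direct contradiction with Proposition~\ref{fact:CG:cofinite}(2). This kills both the locally finite and (via the reduction) the locally solvable case.

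The only genuinely non-routine ingredients are the two classical structural results I rely on (Hall--Schmidt on finitely generated periodic solvable groups, and Hall--Kulatilaka on infinite locally finite groups); once they are cited, both cases close with a one-line appeal to Proposition~\ref{fact:CG:cofinite}(2), so the identification/citation of these two theorems is really the only obstacle. An alternative route, avoiding Hall--Kulatilaka, would be to push the infinite strictly descending derived series of Theorem~\ref{no:solvable}(b) inside a locally solvable $H$ and derive a contradiction directly from $H$ being locally solvable; but the Hall--Schmidt $\to$ Hall--Kulatilaka route is shorter and cleaner.
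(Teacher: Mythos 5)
Your proof is correct and follows essentially the same route as the paper: the locally solvable case is reduced to the locally finite case using torsionness (Proposition \ref{fact:CG:cofinite}(5) plus the fact that periodic locally solvable groups are locally finite, which you justify via the finiteness of finitely generated periodic solvable groups), and the locally finite case is then killed by the Hall--Kulatilaka(--Kargapolov) theorem together with Proposition \ref{fact:CG:cofinite}(2). The only difference is cosmetic: you spell out the Hall--Schmidt step that the paper takes for granted.
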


\begin{proof} Infinite locally finite groups have an infinite abelian subgroup by the Hall-Kulatilaka-Kargapolov Theorem (for a proof see \cite[14.3.7]{Robinson}). By 
Proposition \ref{fact:CG:cofinite}(2), $G$ has no such subgroups. 

As far as the second assertion is concerned, torsion locally solvable groups are locally finite. By 
 Proposition \ref{fact:CG:cofinite}(5) and the first part of the proof, $G$ has no infinite locally solvable subgroups.
\end{proof}

By a celebrated result of Zelmanov \cite[Theorem 2]{Z}, every compact torsion group is locally finite. By Theorem \ref{centr:cof:bounded} and Corollary \ref{no:locally:finite}, a non-abelian $\mathfrak{C'}$-cofinite group does not admit any compact Hausdorff group topology (this should be compared to Question \ref{question:hausdorff:top}).

%

\subsection{
 Stability properties of $\C$, $\C_{cen}$, and $\C_{mon}$}\label{StabProp}

 We already saw that $\C$, $\C_{cen}$, and $\C_{mon}$ are stable with respect to taking subgroups and we saw (Corollaries \ref{quotient:zar:cofinite} and \ref{quotient:Mon:cofin}) that  $\C$ and $\C_{mon}$ are stable also under taking quotients with respect to finite normal subgroups. Now we check that also  $\C_{cen}$ has the latter property, recalling that  finite normal subgroups in this class are central, by Proposition \ref{fin:normalizer}. 
 
\begin{lemma}\label{central quotient of C'cof are C'cof} Let $G\in \C_{cen}$ and $H\leq Z(G)$. Then $G/H\in \C_{cen}$. \end{lemma}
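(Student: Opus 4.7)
The plan is to reduce to the only nontrivial case and then control centralizers via the fact that $H$ is forced to be finite. First, I would dispose of two trivial cases: if $G$ is finite, then $G/H$ is finite, so any $T_1$ topology on $G/H$ (in particular $\mathfrak{C}'_{G/H}$) coincides with $\cof_{G/H}$; and if $G$ is abelian, then $G/H$ is abelian, whence every centralizer in $G/H$ equals $G/H$ and $\mathfrak{C}'_{G/H} = \cof_{G/H}$ by definition. So from now on I assume $G$ is infinite and non-abelian. By Proposition \ref{fact:CG:cofinite}(1), $Z(G)$ is finite, hence $H \leq Z(G)$ is finite.

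Set $\overline G = G/H$ and let $q\colon G \to \overline G$ be the quotient map. Since $\mathfrak{C}'_{\overline G}$ always contains $\cof_{\overline G}$, and since the closed subbase of $\mathfrak{C}'_{\overline G}$ consists of singletons together with cosets of centralizers $C_{\overline G}(\overline g)$ (see \eqref{W_cen}), it suffices to show that each $C_{\overline G}(\overline g)$ is either finite or equal to $\overline G$. If $\overline g \in Z(\overline G)$, then $C_{\overline G}(\overline g) = \overline G$ and we are done. So assume $\overline g \notin Z(\overline G)$, and choose any lift $g \in G$; then $g \notin Z(G)$ (otherwise $g$ would centralize $G$, and passing to the quotient would put $\overline g$ in $Z(\overline G)$). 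By Proposition \ref{fact:CG:cofinite}(3), $C_G(g)$ is finite.

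The key step is to show $C_{\overline G}(\overline g)$ is finite by analyzing $q^{-1}(C_{\overline G}(\overline g)) = \{x \in G : [x,g] \in H\}$. Consider the map $\phi_g\colon G \to G$ sending $x \mapsto [x,g] = xgx^{-1}g^{-1}$. A direct computation gives $\phi_g(x) = \phi_g(y)$ iff $xgx^{-1} = ygy^{-1}$ iff $y^{-1}x \in C_G(g)$; thus every fiber of $\phi_g$ is a coset of $C_G(g)$. Hence $\phi_g^{-1}(H)$ is a union of at most $|H|$ cosets of $C_G(g)$, and since both $|H|$ and $|C_G(g)|$ are finite, $q^{-1}(C_{\overline G}(\overline g))$ is finite; therefore $C_{\overline G}(\overline g)$ is finite as well.

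Putting this together, every basic closed set of $\mathfrak{C}_{\overline G}$ is a translate of a centralizer and so is either finite or equal to $\overline G$; combined with $\cof_{\overline G} \leq \mathfrak{C}'_{\overline G}$, this yields $\mathfrak{C}'_{\overline G} = \cof_{\overline G}$, i.e.\ $\overline G \in \C_{cen}$. The only delicate point is the fiber analysis of $\phi_g$ above; everything else is a routine case split plus application of the earlier structural results. In particular, the fact that $H$ embeds into the finite group $Z(G)$ is what makes the argument work, and this is exactly why no analogous argument handles quotients by arbitrary (non-central) finite normal subgroups directly.
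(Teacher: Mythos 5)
Your proof is correct and follows essentially the same route as the paper: for a non-central $\overline g$ one picks a lift $g \notin Z(G)$, observes that the fibers of $x \mapsto [x,g]$ are cosets of the finite group $C_G(g)$, and concludes that $\{x : [x,g] \in H\}$ (the preimage of $C_{\overline G}(\overline g)$) is a finite union of finite cosets. Your write-up is a bit more explicit than the paper's (spelling out the trivial finite/abelian cases and the coset structure of the fibers), but the key idea is identical.
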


\begin{proof} Let $gH$ be a non-central element of $G/H$, so that in particular $g \notin Z(G)$. Let $h\in H$. Since $G$ is $\mathfrak{C}'$ cofinite then $[x,g]=h$ has finitely many solutions. The subgroup $H$ is finite, thus 
\[
\{x\in G \mid[x,g]\in H\}=\bigcup_{h\in H}\{x\in G \mid [x,g]=h\}
\]
is finite, as the map $c_g$ is finitely many-to-one. As $C_{G/H}(gH)=\{xH\in G/H \, |\, [x,g]\in H\}$, we conclude that also $C_{G/H}(gH)$ is finite.
\end{proof}

 The following is the counterpart of Theorem \ref{Product:Z:Noeth:iff:}(b) for the classes $\C$ and $ \C_{cen}$.  

\begin{theorem}\label{prop :dem} Let $G=G_1\times G_2$ be an infinite non-abelian group. Then $G\in \C_{cen}$ (resp, $\C$) if and only if exactly one of the groups, say $G_2$ is finite abelian and $G_1\in \C_{cen}$ (resp, $\C$) is infinite non-abelian. 
\end{theorem}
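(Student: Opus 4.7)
The plan is to prove both directions of the biconditional, handling $\C_{cen}$ and $\C$ in parallel and exploiting $\C \subseteq \C_{cen}$. For necessity, suppose $G = G_1 \times G_2 \in \C_{cen}$ (resp., $\C$) is infinite non-abelian; without loss of generality $G_1$ is non-abelian. First I would show $G_2$ is finite: picking $g_1 \in G_1 \setminus Z(G_1)$ gives a non-central $(g_1, e_{G_2}) \in G$ whose centralizer $C_G(g_1, e_{G_2}) = C_{G_1}(g_1) \times G_2$ must be finite by Proposition \ref{fact:CG:cofinite}(3), so $|G_2| < \infty$. Next, $G_2$ must be abelian: if some $g_2 \in G_2 \setminus Z(G_2)$ existed, then $(e_{G_1}, g_2) \notin Z(G)$ would have centralizer $G_1 \times C_{G_2}(g_2)$, proper yet infinite (as $G_1$ is), again contradicting Proposition \ref{fact:CG:cofinite}(3). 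Since $G$ is infinite and $G_2$ finite, $G_1$ is infinite, and subgroup-stability (Proposition \ref{fact:CG:cofinite}(4) for $\C_{cen}$, Corollary \ref{quotient:zar:cofinite} for $\C$) places $G_1$ in the required class.

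Conversely, given $G_1 \in \C_{cen}$ infinite non-abelian and $G_2$ finite abelian, I would invoke the productivity of the centralizer topology (Lemma \ref{centralizer:top:productive}(1)): $\mathfrak{C}_G = \mathfrak{C}_{G_1} \times \mathfrak{C}_{G_2}$. Since $G_2$ is abelian, every centralizer $C_{G_2}(g_2)$ equals $G_2$, so $\mathfrak{C}_{G_2}$ is indiscrete; by \eqref{W_cen} the $\mathfrak{C}_G$-closed subbasic sets are either empty or of the form $F_1 \times G_2$ with $F_1$ a coset of a centralizer in $G_1$. The hypothesis $\mathfrak{C}'_{G_1} = \cof_{G_1}$ makes every proper such $F_1$ finite, so combined with $|G_2| < \infty$ every proper $\mathfrak{C}_G$-closed subbasic set is finite. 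Hence $\mathfrak{C}_G \leq \cof_G$, giving $\mathfrak{C}'_G = \mathfrak{C}_G \vee \cof_G = \cof_G$, i.e.\ $G \in \C_{cen}$.

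For the $\C$ case the argument above yields $G \in \C_{cen}$, so only $\zar_G = \cof_G$ remains, and this is the main obstacle. Writing $w = (a_1,b_1) x^{\varepsilon_1} \cdots (a_n,b_n) x^{\varepsilon_n} \in G[x]$, componentwise evaluation gives $E_w^G = E_{w_1}^{G_1} \times E_{w_2}^{G_2}$ with $w_i \in G_i[x]$. When $E_{w_1}^{G_1} \subsetneq G_1$ it is finite by $G_1 \in \C$, hence so is $E_w^G$; the delicate subcase is $w_1 \equiv e_{G_1}$ on $G_1$, where $E_w^G = G_1 \times E_{w_2}^{G_2}$ is infinite unless $E_{w_2}^{G_2} \in \{G_2, \emptyset\}$. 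By Corollary \ref{Main:prime:exp}, $G_1$ has prime exponent $p \geq 5$, and the inclusion $\C \subseteq \C_{mon}$ combined with Theorem \ref{prod:Mon} forces $G_2$ to be an elementary abelian $p$-group; then (additively) $w_2(x) = b + s x$ with $b = \sum b_i$ and $s = \sum \varepsilon_i$, so $E_{w_2}^{G_2}$ is a proper singleton exactly when $G_2 \neq 0$ and $s \not\equiv 0 \pmod p$. The key technical step is therefore to prove that $w_1 \equiv e_{G_1}$ on $G_1$ forces $s \equiv 0 \pmod p$; the expected argument leverages the strong structural properties of $G_1$ from Theorem \ref{additional:properties} (perfect, simple, of prime exponent $p$) and Burnside-type reasoning to place every one-variable word identically $e$ on $G_1$ inside the normal closure of $x^p$ in $G_1[x]$, whose generators all have exponent-sum in $p\Z$.
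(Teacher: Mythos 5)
Your handling of $\C_{cen}$ and of the necessity direction is correct, and in the necessity direction it is actually more direct than the paper's argument: where the paper passes to $\bar G = G/Z(G)$, invokes the product formula $\mathfrak C_{\bar G}=\mathfrak C_{\bar G_1}\times\mathfrak C_{\bar G_2}$ and the fact that $\bar G$ is center-free, you read everything off the identity $C_G\bigl((g_1,g_2)\bigr)=C_{G_1}(g_1)\times C_{G_2}(g_2)$ applied to the elements $(g_1,e)$ and $(e,g_2)$ together with Proposition \ref{fact:CG:cofinite}(3); both routes work. The sufficiency for $\C_{cen}$ and the deduction of the necessity for $\C$ from $\C\subseteq\C_{cen}$ and Corollary \ref{quotient:zar:cofinite} match the paper.

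The genuine gap is in the sufficiency direction for $\C$. First, your reduction of $G_2$ to an elementary abelian $p$-group is circular: Theorem \ref{prod:Mon} constrains the exponent of $G_2$ only once you know $G_1\times G_2\in\C_{mon}$, and in this direction the only access to that is through the conclusion $G\in\C$ you are trying to establish; here $G_2$ is merely assumed finite abelian. Second, and more seriously, your ``key technical step'' --- that $w_1\equiv e$ on $G_1$ forces $\varepsilon(w_1)\equiv 0\pmod p$ --- is precisely the assertion that $G_1$ admits no universal word with exponent sum coprime to $p$, and nothing available derives this from $G_1\in\C$. The sketched argument does not close it: the words vanishing identically on $G_1$ form a normal subgroup of $G_1[x]$ that contains the normal closure of $x^p$ but also much else, and whether its image under the exponent-sum homomorphism is $p\Z$ rather than all of $\Z$ is exactly the open point (it is settled only when $G_1\neq G_1'$, which is not guaranteed by Theorem \ref{additional:properties}, since the perfect group produced there is a section of $G_1$, not $G_1$ itself). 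For comparison, the paper settles this direction in one line by citing $\Zar_G\leq\Zar_{G_1}\times\Zar_{G_2}$ and the identity $\cof_{G_1}\times\cof_{G_2}=\cof_G$ for finite $G_2$; your componentwise computation $E_w^G=E_{w_1}^{G_1}\times E_{w_2}^{G_2}$ exposes exactly where that identity is delicate (a slice $G_1\times E_{w_2}^{G_2}$ with $E_{w_2}^{G_2}$ proper and non-empty is closed in the product topology but not cofinite-closed), so you have correctly isolated the real obstruction --- but your proposal does not overcome it.
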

\begin{proof}

$(\Rightarrow)$ Assume that $G = G_1 \times G_2\in \C_{cen}$. Then $Z(G) = Z(G_1) \times Z(G_2)$ is finite, and let $\bar G_i = G_i / Z(G_i)$ for $i=1,2$.
By Proposition \ref{z2=z1}, the group $\bar G = \bar G_1 \times \bar G_2 \cong G/Z(G)$ is 
center-free.
Moreover, $\bar G\in \C_{cen}$ by Lemma \ref{central quotient of C'cof are C'cof}. Then Lemma \ref{centralizer:top:productive}(1) gives
\begin{equation} \label{eq:di:m}
\mathfrak C_{\bar G_1} \times \mathfrak C_{\bar G_2} = \mathfrak C_{\bar G} = \mathfrak C_{\bar G}' = \cof_{\bar G}.
\end{equation}

As $\bar G$ is infinite, at last one of the groups $\bar G_i$ to be infinite, assume that $\bar G_1$ is infinite. As $\bar G_1$ is center-free, $G_1$ is infinite non-abelian. Then 
\[
C_{\bar G}(\{ e_{ \bar G_1 } \} \times \bar G_2 )= \bar G_1 \times Z( \bar G_2 )
\]
is infinite, hence it coincides with $\bar G$. Then $\bar G_2$ is abelian, so $\mathfrak C_{\bar G_2}$ is indiscrete. Then (\ref{eq:di:m}) gives that $\bar G_2$ is trivial, so $G_2$ is abelian, hence finite by Theorem \ref{no:solvable}. 

 Now assume that $G = G_1 \times G_2\in \C$. Since $\C \subseteq \C_{cen}$, the above argument implies that one of the groups, say $G_2$, is finite abelian
and $G_1\in \C_{cen}$. Actually, $G_1\cong G/G_2\in \C$, by Corollary \ref{quotient:zar:cofinite}.  

$(\Leftarrow)$  Assume that $G_1\in \C_{cen}$ is infinite and non-abelian, and $G_2$ is a finite abelian group.
If $g = (g_1,g_2) \in G$, then $C_G (g)= C_{G_1} (g_1) \times C_{G_2}(g_2) = C_{G_1} (g_1) \times G_2$. Then either $C_{G_1} (g_1) = G_1$, and so $C_G (g)=G$, or $C_{G_1} (g_1)$ is finite, and so $C_G (g)$ is finite. Hence, $G\in \C_{cen}$.

Now assume that $G_1\in \C$ is infinite non-abelian, and $G_2$ is a finite abelian group.
By \cite[Theorem 3.4]{ProductivityZariski}, $\Zar_G \leq \Zar_{G_1}\times \Zar_{G_2}$. Since, our hypothesis $G_1\in \C$ entails $\Zar_{G_1}= \cof_{G_1}$, we have 
$$\cof_G \leq \Zar_G \leq \Zar_{G_1}\times \Zar_{G_2} = \cof_{G_1} \times  \cof_{G_2} = \cof_{G}, 
$$
since $G_2$ is finite. Therefore, $G \in \C$. 
\end{proof}

Next we describe finite productivity in $\C_{mon}$. If $G_1\times G_2\in \C_{mon}$, then both $G_1,G_2\in \C_{mon}$, by Corollary \ref{quotient:Mon:cofin}. 
Nevertheless, this implication cannot be inverted in general as we see below. 

\begin{theorem}\label{prod:Mon} 
Let $G=G_1\times G_2$ be an infinite group. Then $G\in \C_{mon}$ if and only if either both groups $G_1,G_2$ are WCL, or there exists a prime $p$ such that both groups $G_1,G_2$ have exponent $p$. 
\end{theorem}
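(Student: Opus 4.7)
The plan is to reduce the claim to Theorem \ref{Super_Main} applied to $G$ itself, together with the elementary observation that both alternatives appearing there (prime exponent $p$ and WCL) are well-behaved under passing to subgroups and under forming finite direct products.

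For the forward direction, I would assume $G = G_1\times G_2 \in \C_{mon}$ and apply Theorem \ref{Super_Main} to $G$. If $G$ has prime exponent $p$, then each $G_i$, identified with the factor $G_1\times\{e\}$ or $\{e\}\times G_2$ in $G$, inherits exponent dividing $p$, hence each non-trivial factor has exponent exactly $p$. If instead $G$ is WCL, then each $G_i$ is WCL as well: for $a\in G_i$ and $n>0$, the set $\{x\in G_i\mid x^n=a\}$ embeds into the finite set $\{x\in G\mid x^n=\tilde a\}$, where $\tilde a$ is the image of $a$ in $G$.

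For the backward direction, I would verify in each case that $G$ itself has the corresponding property, and then invoke Theorem \ref{Super_Main} to conclude $G\in \C_{mon}$. If both $G_1, G_2$ have exponent $p$, then $G = G_1\times G_2$ has exponent $p$, so $G\in \C_{mon}$ by Theorem \ref{Super_Main}. If both $G_1, G_2$ are WCL, then $G$ is WCL: since exponentiation in a direct product is componentwise, $(x,y)^n = (a,b)$ if and only if $x^n=a$ and $y^n=b$, so the solution set is the cartesian product of the two finite sets provided by WCL in $G_1$ and $G_2$, hence finite. Theorem \ref{Super_Main} then gives $G\in \C_{mon}$.

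No genuine obstacle is expected here: the proof is a straightforward combination of Theorem \ref{Super_Main} with the componentwise behaviour of exponentiation. The only mild delicacy concerns a possibly trivial factor: if, say, $G_2 = \{e\}$, then $G = G_1$ and the statement collapses to Theorem \ref{Super_Main} applied to $G_1$, with the clause ``both have exponent $p$'' interpreted in the natural degenerate sense.
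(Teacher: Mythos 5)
Your proof is correct and follows essentially the same route as the paper: apply Theorem \ref{Super_Main} to $G$, note that prime exponent and WCL are both inherited by subgroups (for necessity) and finitely productive (for sufficiency). The paper's proof is just a terser version of the same argument.
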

\begin{proof} 
If $G\in \C_{mon}$, then either $G$ has prime exponent, or $G$ is WCL, by Theorem \ref{Super_Main}. Since both properties are inherited by subgroups, 
we conclude that either both groups have exponent $p$ or  both groups are WCL. 
The sufficiency of this condition is obvious, since both properties (WCL and having exponent $p$) are finitely productive.  
\end{proof}

 The following example shows that infinite productivity strongly fails for all three classes $\C_{mon}, \C_{cen}, \C$. 

\begin{example} We show below that $\C_{mon},  \C_{cen}$ and $\C$ are not stable under countably infinite direct products.  

(a) The group $G = \Q/\Z$ is almost torsion-free, so $G \in \C_{mon}$, yet $G^\N$ is not almost torsion-free, so 
$G^\N \not \in \C_{mon}$ since it does not have prime exponent. 

(b) Let $G_0$ be a finite non-abelian group and $\{e\} \ne G_n\in \C_{cen}$ for every $n\geq 1$. Then $G= \prod_{n=0}^\infty \not \in \C_{cen}$,
since the centralizer of every $g\in G_0 \setminus Z(G_0)$ is a proper infinite subgroup of $G$. Taking every $G_n$ to be a non-trivial finite group shows that neither $\C_{cen}$ nor $\C$ are stable under countably infinite direct products.  
\end{example}

A group is called \emph{indecomposable} if it cannot be expressed as the direct product of two proper normal subgroups. Now we show that in studying the infinite non-abelian $\mathfrak C'$-cofinite groups, it is sufficient to consider the indecomposable ones.

\begin{proposition}\label{new:center-free:impl:indec}
 If $G\in \C_{cen}$ (resp., $G\in \C$) is infinite and non-abelian, then $G \cong G_1 \times G_2$, where $G_1\in \C_{cen}$ (resp., $G_1\in \C$) is infinite, non-abelian and indecomposable group, while  $G_2$ is a finite (possibly trivial) abelian group.
\end{proposition}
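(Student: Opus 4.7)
The plan is to combine Theorem~\ref{prop :dem} with an extremality argument over the finite abelian direct factors of $G$. The key preliminary observation is a uniform size bound on such factors: if $K \leq G$ is a finite abelian direct factor, meaning $G = H \times K$ for some complement $H$, then every element of $K$ commutes with $H$ (by the direct product structure) and with $K$ (by abelianness), so $K \leq Z(G)$. Since $\C \subseteq \C_{cen}$, in either case $G \in \C_{cen}$, and Proposition~\ref{fact:CG:cofinite}(1) gives that $Z(G)$ is finite. Hence the orders of finite abelian direct factors of $G$ are bounded by $|Z(G)|$.

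Consequently one may choose a finite abelian direct factor $G_2$ of $G$ of \emph{maximum} order (the trivial group is always a candidate, so the supremum is attained). Let $G_1$ be a complement, so that $G = G_1 \times G_2$. If $G_2$ is trivial, then $G_1 = G$ is infinite non-abelian in the required class by hypothesis; otherwise $G_2$ is a nontrivial finite abelian direct factor of the infinite non-abelian group $G \in \C_{cen}$ (resp.\ $\C$), and Theorem~\ref{prop :dem} yields that $G_1$ is infinite, non-abelian, and belongs to $\C_{cen}$ (resp.\ $\C$).

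It remains to show that $G_1$ is indecomposable. Suppose for contradiction that $G_1 = H_1 \times H_2$ with both $H_1, H_2$ proper (hence nontrivial) direct factors. Since $G_1$ is infinite non-abelian and lies in $\C_{cen}$ (resp.\ $\C$), Theorem~\ref{prop :dem} applied to this decomposition forces exactly one of the factors---say $H_2$---to be finite abelian (and necessarily nontrivial) and the other to be infinite non-abelian. Using the associativity of the direct product we rewrite
\[
G \;\cong\; G_1 \times G_2 \;=\; (H_1 \times H_2) \times G_2 \;\cong\; H_1 \times (H_2 \times G_2),
\]
which exhibits $H_2 \times G_2$ as a finite abelian direct factor of $G$ of order $|H_2|\cdot|G_2| > |G_2|$, contradicting the maximality of $|G_2|$. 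Hence $G_1$ is indecomposable, completing the proof.

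There is no real obstacle in this argument; the only point to verify carefully is the existence of a maximum in step~2, which rests on the central-finiteness provided by Proposition~\ref{fact:CG:cofinite}(1). Everything else is a clean application of Theorem~\ref{prop :dem} and the associativity rearrangement in the final display.
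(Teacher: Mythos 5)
Your proof is correct, but it follows a genuinely different route from the paper's. The paper argues iteratively: it splits off one nontrivial finite abelian direct factor at a time via Theorem~\ref{prop :dem}, producing $G \cong I_n \times A_n \times \cdots \times A_1$, and terminates the process by noting that otherwise the strictly increasing chain $A_1 < A_2 \times A_1 < \cdots$ of finite abelian subgroups would yield an infinite abelian subgroup, which is excluded by Proposition~\ref{fact:CG:cofinite}(2). You instead make a single extremal choice: you first observe that any finite abelian direct factor of $G$ is central, hence of order at most $|Z(G)|$, which is finite by Proposition~\ref{fact:CG:cofinite}(1); you then pick a finite abelian direct factor $G_2$ of maximal order and obtain indecomposability of the complement from the associativity rearrangement $(H_1 \times H_2) \times G_2 \cong H_1 \times (H_2 \times G_2)$. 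Both arguments use Theorem~\ref{prop :dem} in the same essential way; yours replaces the chain-termination step by a cleaner maximality argument and yields the additional information that $G_2 \leq Z(G)$, while the paper's version does not need the centrality observation. All the steps you flag as needing care (boundedness of the orders, attainment of the maximum, nontriviality of $H_2$ in a proper decomposition) do check out, so the argument is complete.
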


 \begin{proof} If $G$ is indecomposable, there is nothing to prove. Otherwise, let $G \cong I_1 \times A_1$, and by Theorem \ref{prop :dem} we can assume $I_1$ to be infinite non-abelian, and $A_1$ to be non-trivial finite abelian. 

If $I_1$ is indecomposable, we are done. Otherwise, as $I_1$ is $\mathfrak C'$-cofinite, we can apply Theorem \ref{prop :dem} to $I_1$ to obtain that $G \cong I_2 \times A_2 \times A_1$, with $I_2$ infinite non-abelian, and $A_2$ finite non-trivial abelian. As $G$ does not contain infinite abelian subgroups by Proposition \ref{fact:CG:cofinite}(2), it cannot contain infinite strictly increasing chains of abelian subgroups. Therefore, this process stops after finitely many steps.
 \end{proof}
 
 
 In an infinite group $G\in \C_{cen}$, a finite subgroup is normal if and only if it is central, by Proposition \ref{fin:normalizer}. Now we obtain a property of the infinite normal subgroups of $G$ as another application of Theorem \ref{prop :dem}. 

\begin{proposition}\label{infinite:intersection}
Let $G\in \C_{cen}$ be infinite and non-abelian. If $H_1$, $H_2$ are infinite normal subgroups of $G$, then $H_1 \cap H_2$ is infinite.
\end{proposition}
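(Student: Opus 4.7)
Suppose for contradiction that $H_1 \cap H_2$ is finite. My plan is to derive a contradiction by showing that some non-central element of $H_1$ has an infinite centralizer in $G$, contradicting Proposition \ref{fact:CG:cofinite}(3).

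First, since $H_1 \cap H_2$ is a finite normal subgroup of $G$, Proposition \ref{fin:normalizer} forces $H_1 \cap H_2 \leq Z(G)$. Next, because both $H_1$ and $H_2$ are normal, the standard observation that $[h_1,h_2]=(h_1 h_2 h_1^{-1})h_2^{-1}\in H_2$ and $[h_1,h_2]=h_1(h_2 h_1^{-1}h_2^{-1})\in H_1$ gives the inclusion
\[
[H_1,H_2]\leq H_1\cap H_2\leq Z(G).
\]

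The key step is to exploit this centrality. Fix any $h_1\in H_1\setminus Z(G)$, which exists because $H_1$ is infinite while $Z(G)$ is finite by Proposition \ref{fact:CG:cofinite}(1). Consider the map $\varphi\colon H_2\to Z(G)$ defined by $\varphi(h_2)=[h_1,h_2]$. Using the commutator identity $[h_1,h_2 h_2']=[h_1,h_2']\cdot[h_1,h_2]^{h_2'}$ and the fact that $[h_1,h_2]\in Z(G)$ is fixed by conjugation, we obtain $\varphi(h_2 h_2')=\varphi(h_2)\varphi(h_2')$, so $\varphi$ is a group homomorphism. Its kernel is $\ker \varphi=C_{H_2}(h_1)=H_2\cap C_G(h_1)$, and since $Z(G)$ is finite, this kernel has finite index in $H_2$. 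As $H_2$ is infinite, $H_2\cap C_G(h_1)$ is infinite as well, so in particular $C_G(h_1)$ is infinite. But $h_1\notin Z(G)$, so by Proposition \ref{fact:CG:cofinite}(3) the centralizer $C_G(h_1)$ must be finite, a contradiction.

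The only mild subtlety is verifying that $\varphi$ is indeed a homomorphism, which relies precisely on the image lying in $Z(G)$; this is the point where the hypotheses (infinite normal subgroups with finite intersection) and the $\mathfrak{C}'$-cofiniteness (giving $Z(G)$ finite and non-central elements of prime centralizer behavior) interact. Everything else is a direct application of results already established in the paper.
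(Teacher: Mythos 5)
Your proof is correct, and it takes a genuinely different route from the paper's. The paper passes to the quotient $\bar G = G/Z(G)$ (which stays in $\C_{cen}$ by Lemma \ref{central quotient of C'cof are C'cof} and is center-free by Proposition \ref{z2=z1}), shows that the images $\bar K_1, \bar K_2$ of $H_1Z(G), H_2Z(G)$ intersect trivially, deduces $\langle \bar K_1, \bar K_2\rangle \cong \bar K_1 \times \bar K_2$, and derives a contradiction from the product theorem (Theorem \ref{prop :dem}) together with the fact that infinite subgroups are non-abelian (Theorem \ref{no:solvable}). You instead stay inside $G$: from $[H_1,H_2]\le H_1\cap H_2\le Z(G)$ you get that for a fixed non-central $h_1\in H_1$ the map $h_2\mapsto [h_1,h_2]$ is a homomorphism $H_2\to Z(G)$ with kernel $H_2\cap C_G(h_1)$ of finite index in the infinite group $H_2$, contradicting the finiteness of $C_G(h_1)$ from Proposition \ref{fact:CG:cofinite}(3). (The precise commutator identity you quote depends on conventions, but since all the commutators involved are central the homomorphism property holds regardless, as a direct computation confirms.) Your argument is more elementary and self-contained: it needs only Proposition \ref{fin:normalizer} and Proposition \ref{fact:CG:cofinite}(1),(3), avoiding the product theorem and the quotient lemma entirely; it is essentially the same "commutator into a finite center forces a large centralizer" mechanism already exploited in Fact \ref{commutator:bilinear:on:nilpo2:groups} and Lemma \ref{central quotient of C'cof are C'cof}. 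What the paper's approach buys in exchange is a structural picture (the near-direct-product decomposition in the quotient) that connects this proposition to the indecomposability results, but as a proof of the stated fact yours is leaner.
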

\begin{proof}
Assume by contradiction that $H_1 \cap H_2$ is finite, so that $H_1 \cap H_2 \leq Z(G)$ by Proposition \ref{fin:normalizer}. 
%
%
Consider the quotient group $\bar G = G/Z(G)$, and the canonical map $\pi :G \to \bar G$. For $i=1,2$, if $K_i = H_i Z(G)$, then $\bar K_i = \pi(K_i) = K_i/Z(G)$ is an infinite normal subgroup of $\bar G$. As $\bar G$ is $\mathfrak C'$-cofinite by Lemma \ref{central quotient of C'cof are C'cof}, Proposition \ref{fin:normalizer} implies that $\bar{K_1}\cap \bar{K_2} \leq Z( \bar G)$. As $\bar G$ is center-free by Proposition \ref{z2=z1}, we deduce that $\bar{K_1}\cap \bar{K_2}$ is trivial. Then the infinite subgroup $\langle\bar{K_1}, \bar{K_2}\rangle$ of $\bar G$ is isomorphic to the direct product $\bar{K_1} \times \bar{K_2}$, contradicting Theorem \ref{prop :dem}, as both $\bar{K_1}$ and $\bar{K_2}$ are non-abelian by Theorem \ref{no:solvable}. 
\end{proof}

\begin{remark} For an infinite and non-abelian $G\in \C_{cen}$ and an infinite normal subgroup $N$ of $G$ one has $Z(N)=N\cap Z(G)$.
Indeed, $N\cap Z(G)\leq Z(N)$. By Proposition \ref{fact:CG:cofinite}, $N$ is $\zar$-cofinite, hence $Z(N)$ is finite. Then $Z(N)$ is a finite normal subgroup of $G$ since it is a characteristic subgroup of a normal subgroup of $G$. Then $Z(N)\leq Z(G)$ by Proposition \ref{fin:normalizer}.
\end{remark}

\section{Proof of Theorems \ref{cof:elem:model_intro} and  \ref{additional:properties}}\label{countable:zar-cofinite}\label{zar-cofinite groups}


Clearly, $G$ is $\zar$-cofinite if and only if every proper elementary algebraic subset $E_w$ is finite.

\begin{lemma}\label{Lem:countableSub} 
Let $G$ be an infinite group, $w \in G[x]$, and assume that $E_w^G\subsetneq G$ is infinite. Then there exists a countable subgroup $H$ such that $w \in H[x]$ and $E_w^H\subsetneq H$ is infinite.
\end{lemma}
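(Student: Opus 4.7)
The plan is to construct $H$ by the simplest possible downward Löwenheim–Skolem style argument: take the subgroup generated by the coefficients of $w$, together with enough elements of $E_w^G$ and one element outside it. Since $w$ has finitely many coefficients, the resulting subgroup will be countable.

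More precisely, I would proceed as follows. First, write $w = g_1 x^{\varepsilon_1} g_2 x^{\varepsilon_2} \cdots g_n x^{\varepsilon_n}$ and let $C = \{g_1, \ldots, g_n\} \subseteq G$ be the (finite) set of coefficients of $w$. Next, using the hypothesis that $E_w^G$ is infinite, pick a countably infinite subset $S \subseteq E_w^G$. Using the hypothesis that $E_w^G \subsetneq G$, pick an element $a \in G \setminus E_w^G$. Finally, define
\[
H = \langle C \cup S \cup \{a\} \rangle \leq G,
\]
which is a countable subgroup of $G$ because it is generated by a countable set.

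It then remains to verify the three required properties. Since $C \subseteq H$, the word $w$ lies in $H[x]$. For any $h \in H$, the condition $w(h) = e_G$ is intrinsic, so $E_w^H = E_w^G \cap H$; hence $S \subseteq E_w^H$, giving that $E_w^H$ is infinite, while $a \in H \setminus E_w^G \subseteq H \setminus E_w^H$ shows that $E_w^H \subsetneq H$. There is no substantive obstacle here: the argument is purely set-theoretic and uses only that $w$ has finitely many coefficients.
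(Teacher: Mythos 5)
Your proposal is correct and is essentially identical to the paper's own proof: both take $H$ to be the subgroup generated by the finitely many coefficients of $w$, a countably infinite subset of $E_w^G$, and one element outside $E_w^G$, then observe that $E_w^H = E_w^G \cap H$ is infinite and proper. Nothing to add.
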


\begin{proof} Let $X$ be a countable subset of $E_w^G$, and $g \in G \setminus E_w^G$, and consider the subgroup $H$ of $G$ generated by $X$, the coefficients of $w$, and $g$. Then $H$ is countable, $w \in H[x]$, and $E_w^H = E_w^G \cap H$ is a proper (witnessed by $g \in H \setminus E_w^H$) and infinite (witnessed by $X \subseteq E_w^H$) subset of $H$. 
\end{proof}

In \cite[Theorem 5.4]{secondo:survey} (see also Theorem \ref{Product:Z:Noeth:iff:}(a)), we have proved that a group is $\mathfrak Z$-Noetherian if and only if all its  countable subgroups are $\mathfrak Z$-Noetherian. Now we prove an analogue for $\zar$-cofinite groups, for $\mathfrak{C}'$-cofinite groups and for $\zar_{mon}$-cofinite groups. 

\bigskip

\noindent {\bf Proof of Theorem \ref{cof:elem:model_intro}.} $(\Rightarrow)$ Every infinite subgroup of a $\mathfrak{Z}$-cofinite (resp. $\mathfrak{C}'$-cofinite, $\zar_{mon}$-cofinite) 
group is $\mathfrak{Z}$-cofinite (resp. $\mathfrak{C}'$-cofinite, $\zar_{mon}$-cofinite).

$(\Leftarrow)$ Assume that $G$ is not $\mathfrak Z$-cofinite. Then $E_w^G \subsetneq G$ is infinite for some $w \in G[x]$. By Lemma \ref{Lem:countableSub}, $G$ has a countable subgroup $H$ that is not $\mathfrak{Z}$-cofinite. 

If $G$ is not $\mathfrak{C}'$-cofinite, there is $g\in G$ such that the subgroup $C_G(g)$ is proper and infinite. Let $w=[g,x] \in G[x]$ and apply Lemma \ref{Lem:countableSub} to $E_w^G = C_G(g)$ to get a countable subgroup $H$ such that $g \in H$ and $C_H(g)\neq H$ is infinite. Hence, $H$ is not 
$\mathfrak{C}'$-cofinite. 

Similarly, if $G$ is not $\zar_{mon}$-cofinite, then $E_{gx^n}^G\ne G$ is infinite for some $g\in G$ and $n\in \mathbb{N}$. We can apply again Lemma \ref{Lem:countableSub} to $w=g x^n \in G[x]$ to get a countable subgroup $H$ such that $g \in H$ and $E_w^H\ne H$ is infinite. Hence, $H$ is not $\zar_{mon}$-cofinite. 

A second argument in last case can be given, based on Theorem \ref{Super_Main}. Indeed, this theorem implies that if $G$ is not $\zar_{mon}$-cofinite, then $G$ is not WCL and $G$ does not have prime exponent. Then there exist a countable subgroup $H_1$ of $G$ that is not WLC and a  countable subgroup $H_2$ of $G$ that is not of prime exponent. Then the countable subgroup $H$ generated by $H_1$ and by $H_2$ is  neither WCL nor of prime exponent. Therefore, $H$ is not $\zar_{mon}$-cofinite. \hfill $\Box$

\medskip

 With a similar proof one can obtain more. If $\mathcal{W} \subseteq G[x]$ is a countable  family of words, the above theorem easily generalizes as follows: 
if $\mathfrak T_\mathcal{W} \ne \cof_G$, then there is a countable $H\leq G$ with $\mathcal{W} \subseteq H[x]$ and $\mathfrak T_\mathcal{W} \ne \cof_H$. 

%
%

As a consequence of Theorem \ref{cof:elem:model_intro}, we can limit the study of the classes $\C, \C_{cen}, \C_{mon}$  to the case of countable groups.
As anticipated in Theorem \ref{additional:properties}  we can restrict our attention to finitely generated $\mathfrak Z$-cofinite groups. 

\bigskip

Now we prepare the proof of Theorem \ref{additional:properties}. To this end we recall that for every prime $p$ the finite $p$-groups are nilpotent, while infinite $p$-groups need not be even solvable. 

In the next lemma we give some additional conditions under which the derived series $G' \geq G'' \geq \ldots$ stabilizes, after finitely many steps, to a non-trivial subgroup.

For a group $G$ we denote by $r_{fin}(G)$ the intersection 
\[
r_{fin}(G) = \bigcap_{N\triangleleft G, \, [G:N]< \infty} N = \bigcap_{H\leq G, \, [G:H]< \infty} H.
\] 
Clearly, $G/r_{fin}(G)$ is residually finite; in particular, $G$ is residually finite if and only if $r_{fin}(G)$ is trivial.

\begin{lemma}\cite{Kos}\label{fin:gen:prime:exp}
 Let $G$ be an infinite finitely generated non-abelian group of prime exponent. Then $G^{(m)} =G^{(m+1)}$ for some positive integer $m\in \N$, and this subgroup 
 of $G$ is finitely generated and has finite index in $G$. In particular, $G^{(m)}=r_{fin}(G)$. 
\end{lemma}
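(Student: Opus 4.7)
The plan is to identify $H = r_{fin}(G)$ as the eventual value of the derived series. Let $p$ be the prime exponent of $G$. The first and most substantial step is to invoke Kostrikin's theorem --- the positive solution to the Restricted Burnside Problem for prime exponent --- to conclude that every finitely generated residually finite group of exponent $p$ is finite. The quotient $G/H$ is finitely generated, residually finite by the very definition of $r_{fin}(G)$, and of exponent $p$, so Kostrikin's theorem yields that $G/H$ is finite. Hence $H$ has finite index in $G$ and, as a finite-index subgroup of a finitely generated group, is itself finitely generated.

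Next I would show that $H$ has no proper subgroup of finite index by a standard core argument: given $K \leq H$ with $[H:K] < \infty$, the index $[G:K]$ is also finite, so the normal core $K_G = \bigcap_{g \in G} gKg^{-1}$ is a finite-index normal subgroup of $G$. Hence $K_G \supseteq r_{fin}(G) = H$, which forces $K = H$. (I record this observation here because it will also yield that $H$ is perfect: the abelianization $H/H'$ is a finitely generated abelian group of exponent $p$, hence finite, and since $H$ admits no proper finite-index subgroup one must have $H = H'$.)

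I would then prove by induction on $k \geq 0$ that each derived subgroup $G^{(k)}$ is finitely generated and of finite index in $G$: in the inductive step the abelianization $G^{(k)}/G^{(k+1)}$ is a finitely generated abelian group of exponent $p$, hence finite, so $G^{(k+1)}$ has finite index in $G^{(k)}$ and remains finitely generated. In particular $H \subseteq G^{(k)}$ for every $k$. Since $G/H$ is a finite group of exponent $p$, it is a finite $p$-group, and so is nilpotent and solvable of some derived length; taking $m \geq 1$ to be this derived length (or $m = 1$ in the degenerate case $G = H$, where $G$ is then perfect by the previous paragraph) one obtains $G^{(m)} \subseteq H$. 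Combined with the inclusion $H \subseteq G^{(m)}$ from the induction, this yields $G^{(m)} = H = G^{(m+1)}$, delivering all the assertions of the lemma simultaneously.

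The main obstacle is concentrated in the very first step: the claim that every finitely generated residually finite group of exponent $p$ is finite is precisely Kostrikin's theorem, and there is no elementary substitute for it. This is also the unique place where primality of the exponent is essential (for arbitrary bounded exponent one would have to invoke Zelmanov's much deeper theorem instead). Everything else is routine bookkeeping on derived series: the core trick for finite-index subgroups, together with the elementary fact that finitely generated abelian groups of bounded exponent are finite.
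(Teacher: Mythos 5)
Your proof is correct and follows essentially the same route as the paper: induct on the abelianizations $G^{(k)}/G^{(k+1)}$ (finitely generated abelian of exponent $p$, hence finite) to get that each derived term is finitely generated of finite index, use the Restricted Burnside Problem to conclude that the finitely generated residually finite quotient $G/r_{fin}(G)$ of exponent $p$ is finite, and then use solvability of this finite $p$-group to pin $r_{fin}(G)$ down as $G^{(m)}$. The only differences are cosmetic: the paper spells out the deduction ``f.g.\ $+$ residually finite $+$ finite exponent $\Rightarrow$ finite'' from the statement that there are only finitely many finite quotients, whereas you cite it directly, and your normal-core/perfectness observation replaces the paper's appeal to the nilpotency class of $G/r_{fin}(G)$.
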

\begin{proof}
Let $p$ be the prime exponent of $G$. The abelianization $G/G'$ is a finitely generated abelian group of exponent $p$, hence it is finite (and it is an elementary abelian $p$-group indeed). Since finite index subgroups of finitely generated groups are themselves finitely generated (\cite[Proposition 2.1]{Mann}), we deduce that $G'$ is finitely generated.  Proceeding by induction we obtain that the index $[G^{(n)}:G^{(n+1)}]$ is finite for every $n\in \N$. 

Now we prove that the subgroup $H = r_{fin}(G)$ coincides with $G^{(m)}$ for some $m\in \mathbb{N}$.
We use the fact that $G/H$ is finitely generated, residually finite and it has exponent $p$. Let us see that this implies that 
$G/H$ is finite. To this end we use the following well-known claim.  
Nevertheless, we provide a complete proof for the benefit of the reader. 

\begin{claim} An infinite finitely generated group $G$ of positive exponent is not residually finite. 
\end{claim}

\noindent {\em Proof of the Claim.}
According to the Restricted Burnside problem (see \cite{Kos} and \cite{Zelmanov, Zelmanov2} for the solution of this celebrated problem.) the finite quotients of a
 finitely generated group $G$ of finite exponent are, up to isomorphism, finitely many, say $B_1, …, B_s$. Put $B = \prod_{l=1}^s B_i$. 
Then, if such a group $G$ were residually finite, then it would be isomorphic to a subgroup of a power $P =B^I$, for some  infinite index set $I$.
Let us see that this assumption leads to a contradiction. This follows from more general results in the realm of varieties of algebras (see \cite[Theorem 3.49]{Bergman}), 
but as mentioned above we prefer to give a self-contained argument. 

Let $r = |B|$ and for $J\subseteq I$ denote by $\Delta_J$ the diagonal subgroup of $B^J$. Clearly, $\Delta_J \cong B$ is finite. 
Assume that $ G = \langle g_1, …, g_k\rangle$ and consider each generator $g_i$ as a function $g_i: I \to B$. This gives rise to a finite partition $\mathcal P_i:= \{I_{i,j}\}_{j=1,…,r}$ of $I$ (with possibly $I_{i,j} = \emptyset$), such that the function $g_i: I \to B$ is constant on $I_{i,j}$.

Taking all non-empty intersections $\bigcap_{i=1}^sI_ {i,j_i}$, where $(j_1,\ldots,j_s)\in \{1,\ldots,r\}^k$ we obtain a new finite partition $\{J_l\}_{l=1}^L$  of $I$, that is finer than all these partitions $\mathcal P_{i}$. For $l = 1,\dots , L$ all generators $g_i$ are simultaneously constant functions on $J_l$,
so  the corresponding projection $p_l: B^I\to B^{J_l}$ sends all $g_i$ in the diagonal subgroup $\Delta_{{J_l}}$ of $B^{J_l}$. 
 Therefore, $p_l(G)$ is a finite subgroup of $B^{J_l}$ for all $l = 1,\dots , L$. Since $G$ is isomorphic to a subgroup of $B^I = \prod_{l=1}^L B^{J_l}$, such that all its projections $p_l(G)$ are finite, we deduce that $G$ is finite.    \hfill $\Box$\\

Now we can finish the proof of Lemma \ref{fin:gen:prime:exp}, concluding with the above claim that  $G/H$, being finitely generated, residually finite and of exponent $p$, must be finite, hence nilpotent.   If $m\in \mathbb{N}$ is the nilpotency class of $G/H$, then $G^{(m)}\leq H$. On the other hand, $G^{(m)}$ has finite index in $G$, so $H\leq  G^{(m)}$.
\end{proof}

A group $G$ is called {\em perfect} if $G'=G$. Clearly, a non-trivial finite $p$-group is never perfect.  

\begin{lemma}\label{caseG=G'}
 A perfect $p$-group has no proper subgroups of finite index.
\end{lemma}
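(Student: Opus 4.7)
The plan is to pass to the normal core of a hypothetical proper finite-index subgroup and exploit the fact that a non-trivial finite $p$-group cannot be perfect.

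Let $G$ be a perfect $p$-group, and suppose for contradiction that $H$ is a proper subgroup of $G$ with $[G:H]<\infty$. First, I would replace $H$ by its normal core
\[
N=\bigcap_{g\in G} gHg^{-1},
\]
which is a normal subgroup of $G$ contained in $H$. A standard argument (the action of $G$ on the finite coset space $G/H$ has kernel $N$, so $G/N$ embeds in the finite symmetric group on $[G:H]$ letters) shows that $[G:N]$ is finite. In particular $Q=G/N$ is a finite group, and since $G$ is a $p$-group, so is $Q$.

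Next I would invoke two classical facts: (i) any finite $p$-group is nilpotent (its upper central series reaches the top), and hence solvable, so for a non-trivial finite $p$-group $Q$ one has $Q'\lneq Q$; and (ii) the commutator subgroup is compatible with quotients, so the perfectness $G=G'$ of $G$ forces $Q=(G/N)=(G/N)'=Q'$. Combining (i) and (ii), $Q$ cannot be non-trivial, so $N=G$. But then $H\supseteq N=G$, contradicting the assumption that $H$ was proper.

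There is no real obstacle here; the only point worth double-checking is that the normal core of a finite-index subgroup again has finite index, which is the routine finite-symmetric-group embedding. All other ingredients (a $p$-group has $p$-group quotients, finite $p$-groups are nilpotent hence non-perfect unless trivial, $G=G'$ passes to quotients) are standard and are either recalled in Section 3.1 of the paper or are textbook material.
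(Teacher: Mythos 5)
Your proof is correct and follows essentially the same route as the paper: pass to a proper normal subgroup of finite index (the paper leaves the normal-core step implicit) and observe that the resulting non-trivial finite quotient would be a perfect $p$-group, which is impossible since finite $p$-groups are nilpotent. Your write-up just spells out the standard details that the paper compresses into two sentences.
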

\begin{proof}
If $G$ has a proper subgroup of finite index, then $G$ has a proper normal subgroup of finite index, say $H$. Then $G/H$ is a finite perfect $p$-group, that is a contradiction.
\end{proof}

\noindent {\bf Proof of Theorem \ref{additional:properties}.} We have to prove that if there exists an infinite non-abelian group $G \in \C$, then there exists also 
an infinite group $K \in \C$ such that:
\begin{itemize}
    \item[(a)] $K$ is finitely generated and has prime exponent, 
    \item[(b)] $K$ is perfect, center-free and indecomposable, 
    \item[(c)] $K$ has no proper subgroups of finite index,
    \item[(d)] $K$ has no proper finite normal subgroups.
\end{itemize}

Assume that $G$ is an infinite non-abelian $\zar$-cofinite group. Then $G$ has prime exponent $p$ by Theorem \ref{zar-cofinite:prime:exp}. By Corollary \ref{no:locally:finite}, $G$ is not locally finite, so $G$ has an infinite finitely generated subgroup $S$, which is $\zar$-cofinite itself. Then $S$ is non-abelian by Proposition \ref{fact:CG:cofinite}(2). As $S$ has exponent $p$, we can apply Lemma \ref{fin:gen:prime:exp} to find an integer $m$ such that the subgroup $H=S^{(m)}$ of $S$ is perfect, and has finite index in $S$. In particular, $H$ is infinite, 
finitely generated, and has exponent $p$.


(a)-(b)-(c). By Theorem \ref{z2=z1}(a), $Z_2(H) =Z(H)$ is finite. So $K = H/Z(H)$ is infinite and center-free. 
Being a quotient of $H$, also $K$ is finitely generated, perfect, and has exponent $p$. Then $K$ has no proper finite-index subgroup by Lemma \ref{caseG=G'}. On the other hand, $K$ is indecomposable by Proposition \ref{new:center-free:impl:indec}. 

(d). Apply Corollary \ref{quotient:zar:cofinite} to the $\zar$-cofinite group $H$ and its finite subgroup $Z(H)$, so that $\mathfrak Z_{K} = \overline {\mathfrak Z}_H = \cof_K$, we conclude that $K$ is also $\zar$-cofinite. In particular, $K$ has no proper finite normal subgroups by Proposition \ref{fin:normalizer}.	
\hfill $\Box$

The above argument works with the weaker hypothesis $G \in \C_{cen}\cap  \C_{mon}$, and under this weaker assumption the conclusion will give a group $K\in \C_{cen}\cap  \C_{mon}$ with the properties (a)--(d).

\section{Further properties of the class $\C_{cen}$}\label{Further}

\subsection{Engel elements of $\mathfrak{C}'_G$-cofinite groups}\label{Engel}

Let $G$ be a group. For elements $x,y\in G$, let us define by induction
\[
[x,_0 y]=x,\quad [x,_{n+1} y]=[[x,_n y],y].
\]
We can define the subset of {\em Engel elements} of $G$ as:
\begin{align*}
L(G)&=\{x\in G\, |\, \text{ for every } g\in G \text{ there exists }n\in\N \text{ such that }[x,_n g]=e\}.
\end{align*}
In particular, $Z(G)\subseteq L(G)$.

A group $G$ is called an \emph{Engel group} if $G=L(G)$. If there exists $n\in \mathbb{N}$ such that $[x,_n y]=e$ for every $x,y\in G$ then $G$ is said to be an $n$-Engel group. Clearly if $G$ is nilpotent of class $n$ then it is $n$-Engel. Conversely, finite Engel groups are nilpotent. 

The {\it Fitting subgroup} $F(G)$ of a group $G$ is the subgroup generated by all nilpotent normal subgroups of $G$. The Fitting subgroup and the set $L(G)$ are related by the following theorem.
\begin{theorem}\label{group_gen_by_n_engel}\label{Peng}\cite{Peng}
If $G$ is a group satisfying the maximal condition on abelian subgroups, then $L(G)=F(G)$.
\end{theorem}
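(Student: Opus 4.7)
The plan is to establish $L(G)=F(G)$ by proving the two inclusions separately, using throughout the following auxiliary fact: a locally nilpotent group satisfying the maximal condition on abelian subgroups is nilpotent (a classical theorem of McLain and Mal'cev). This is the crucial bridge that lets us pass from the local to the global nilpotence under our hypothesis.

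For the inclusion $F(G)\subseteq L(G)$, I would first observe that $F(G)$ itself is nilpotent. Indeed, by Fitting's theorem, the product of finitely many normal nilpotent subgroups is nilpotent, so $F(G)$ is locally nilpotent; combined with the max condition on abelian subgroups (inherited by $F(G)$), the auxiliary fact upgrades this to honest nilpotence. Now fix $x\in F(G)$ and $g\in G$, and consider the normal closure $N:=\langle x\rangle^{G}\subseteq F(G)$, which is nilpotent of some class $c$. The iterated commutators $[x,_n g]$ all lie in $N$, and I would exploit the nilpotent structure of $N$ together with the Engel-type calculus governing the action of conjugation by $g$ on $N$ to force $[x,_n g]=e$ for some $n$, giving $x\in L(G)$.

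For the reverse inclusion $L(G)\subseteq F(G)$, take $x\in L(G)$ and set $N=\langle x\rangle^{G}$; it suffices to show $N$ is nilpotent, since then $N$ is a normal nilpotent subgroup and hence $N\subseteq F(G)$. I would tackle this in two steps: first show $N$ is locally nilpotent, then invoke the auxiliary fact. For local nilpotence, take any finitely generated subgroup $H=\langle x^{g_{1}},\ldots,x^{g_{k}}\rangle$ of $N$; each generator is itself a left Engel element (since $L(G)$ is stable under conjugation), and I would argue by induction on $k$ that $H$ is nilpotent, using the Engel condition on pairs of generators together with max on abelian subgroups of $H$ to control the class at each step.

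The main obstacle is producing a uniform nilpotency class for the finitely generated subgroup $H$ from the pointwise Engel condition on its generators; this is the technical heart of Peng's argument. The max on abelian subgroups is essential here, since it is what allows one to bound ascending chains of centralizers (and, more generally, of abelian normal subgroups arising in the derived series of $H$) and thereby terminate the induction. Once local nilpotence of $N$ is established, the auxiliary McLain/Mal'cev theorem yields nilpotence of $N$, and the two inclusions combine to give $L(G)=F(G)$.
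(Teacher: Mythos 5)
The paper itself offers no proof of this statement: it is quoted from Peng's paper \cite{Peng} and used as a black box, so there is no in-paper argument to measure yours against, and I can only assess your sketch on its own terms. Your treatment of the inclusion $F(G)\subseteq L(G)$ is correct in outline but heavier than necessary: any $x\in F(G)$ lies in a product of finitely many normal nilpotent subgroups, which by Fitting's theorem is a single normal nilpotent subgroup $N$ of some class $c$, and then $[g,_{c+1}x]\in\gamma_{c+1}(N)=\{e\}$ for every $g\in G$ --- no appeal to max-ab or to nilpotence of all of $F(G)$ is needed. Note, however, that this computation shows $x$ is a \emph{left} Engel element (the repeated entry is $x$); under the literal reading of the displayed definition of $L(G)$, where the repeated entry is $g$, the inclusion $F(G)\subseteq L(G)$ is false (already in a polycyclic group of the form $\Z^2\rtimes\Z$ with hyperbolic action the Fitting subgroup contains $\Z^2$ while no nontrivial element of $\Z^2$ satisfies $[x,_n g]=e$ for the generator $g$ of the $\Z$-factor). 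So you must fix the left-Engel convention before even the ``easy'' half of your argument makes sense; your phrase ``Engel-type calculus governing the action of conjugation by $g$'' glosses over exactly this point.

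The genuine gap is in $L(G)\subseteq F(G)$. Reducing to local nilpotence of $N=\langle x\rangle^{G}$ and then invoking the Mal'cev--McLain theorem is a reasonable architecture, but the step you give for local nilpotence --- induction on the number of conjugates of $x$, ``using the Engel condition on pairs of generators together with max on abelian subgroups to control the class at each step'' --- is not an argument; it is a restatement of the theorem, and you concede as much when you defer ``the technical heart of Peng's argument.'' The Engel condition on a finite generating set does not by itself yield nilpotence or even local nilpotence: Golod constructed finitely generated Engel groups that are not nilpotent, so the maximal condition must enter through a concrete mechanism that your sketch never identifies. In the Baer--Peng line of proof that mechanism is, roughly, to show first that the normal closure of a left Engel element $x$ in each two-generated subgroup $\langle x,g\rangle$ is nilpotent (via a subnormality argument), and then to use the chain condition --- max-ab, upgraded by Mal'cev's theorem to max on locally nilpotent subgroups --- to show that the join of these nilpotent normal closures over all $g$ stabilizes at a normal nilpotent subgroup containing $x$. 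Without supplying that argument, or an equivalent one, the proposal does not prove the theorem; as it stands it proves only the easy inclusion.
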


\begin{theorem}
Let $G$ be an infinite non-abelian $\mathfrak{C}'$-cofinite group. Then $Z(G)=L(G)=F(G)$. In particular, $G$ is not an Engel group.
\end{theorem}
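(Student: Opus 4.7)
The inclusion $Z(G)\subseteq L(G)$ is automatic and stated before the theorem, so the content is the reverse chain $L(G)=F(G)\subseteq Z(G)$. I would obtain $L(G)=F(G)$ from Peng's Theorem \ref{Peng}, and $F(G)\subseteq Z(G)$ from the structural results of \S\ref{sec:structure:C'cofinite}.

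\textbf{Step 1: $G$ satisfies the maximal condition on abelian subgroups.} By Proposition \ref{fact:CG:cofinite}(2), every abelian subgroup of $G$ is finite. Suppose toward contradiction that $A_1\subsetneq A_2\subsetneq\cdots$ were a strictly ascending chain of abelian subgroups. Then $A=\bigcup_{n} A_n$ is again abelian (union of a chain of abelian subgroups), and it is infinite since the chain is strictly increasing among finite groups of growing order. This contradicts Proposition \ref{fact:CG:cofinite}(2). Hence the maximal condition on abelian subgroups holds, and Theorem \ref{Peng} yields $L(G)=F(G)$.

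\textbf{Step 2: $F(G)\subseteq Z(G)$.} Let $N$ be any nilpotent normal subgroup of $G$. By Theorem \ref{z2=z1}(a), $G$ has no infinite nilpotent subgroups, so $N$ is finite. Being a finite normal subgroup of the infinite $\mathfrak{C}'$-cofinite group $G$, Proposition \ref{fin:normalizer} forces $N\leq Z(G)$. Since $F(G)$ is generated by all such $N$, we obtain $F(G)\leq Z(G)$. Combined with the obvious reverse inclusion $Z(G)\leq F(G)$ (the center is an abelian, hence nilpotent, normal subgroup), this gives $F(G)=Z(G)$.

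\textbf{Step 3: Conclusion.} Combining Steps 1 and 2, $Z(G)\subseteq L(G)=F(G)\subseteq Z(G)$, so all three coincide. Finally, $Z(G)$ is finite by Proposition \ref{fact:CG:cofinite}(1), while $G$ is infinite, so $L(G)=Z(G)\subsetneq G$ and $G$ is not an Engel group.

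The only conceptually delicate point is verifying the hypothesis of Peng's theorem, i.e.\ Step 1; the rest is an assembly of results already established for $\mathfrak{C}'$-cofinite groups. No additional computation is required.
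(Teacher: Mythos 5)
Your proof is correct and follows essentially the same route as the paper: Peng's theorem gives $L(G)=F(G)$ once the maximal condition on abelian subgroups is verified (which holds since all abelian subgroups are finite by Proposition \ref{fact:CG:cofinite}(2)), and $F(G)\leq Z(G)$ follows because nilpotent subgroups are finite (Theorem \ref{no:solvable}(a)) and finite normal subgroups are central (Proposition \ref{fin:normalizer}). Your explicit verification of the maximal condition via the union of a strictly ascending chain is a small but welcome addition that the paper leaves implicit.
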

\begin{proof}
According to Theorem \ref{no:solvable}(a), the nilpotent subgroups of $G$ are finite. Thus, the finite normal nilpotent subgroups of $G$ are central according to Proposition \ref{fin:normalizer}. Therefore $F(G)\leq Z(G)$. The group $G$ satisfies the hypothesis of Theorem \ref{group_gen_by_n_engel}, indeed $G$ has no infinite abelian subgroups. Then we have
\[
Z(G)\subseteq L(G)=F(G)\leq Z(G).		\qedhere
\]
\end{proof}

\subsection{Maximal finite subgroups of $\mathfrak C'$-cofinite groups}\label{MAX}

 By definition, in a Tarski Monster group the finite cyclic subgroups are maximal subgroups. In this section we show that infinite non-abelian $\mathfrak C'$-cofinite groups have maximal finite subgroups, and we prove some additional properties that infinite non-abelian $\mathfrak C'$-cofinite groups share with Tarski monster groups. 

We start proving that maximal finite subgroups exist in an infinite non-abelian $\mathfrak C'$-cofinite groups.

\begin{proposition}\label{maximal finite sub}
Let $G$ be an infinite non-abelian $\mathfrak C'$-cofinite group. Then every finite subgroup of $G$ is contained in a 
maximal finite subgroup $M$ of $G$, and such $M$ satisfies
\[
Z(G) \leq Z(M) = C_G(M) \leq N_G(M)=M.
\]
\end{proposition}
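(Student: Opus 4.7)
The plan is first to produce a maximal finite subgroup containing any prescribed finite $F \leq G$ via Zorn's lemma, and then to deduce the displayed chain of inclusions and equalities from the maximality of $M$ together with the structural results already established for $\C_{cen}$.

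For the existence, I would order the family of finite subgroups of $G$ containing $F$ by inclusion. Given any chain $\{F_\alpha\}_{\alpha \in A}$ in this family, the union $U = \bigcup_\alpha F_\alpha$ is a subgroup, and is locally finite since every finitely generated subset of $U$ already lies in some $F_\alpha$. By Corollary \ref{no:locally:finite}, $G$ admits no infinite locally finite subgroup, so $U$ must be finite and thus provides an upper bound for the chain. Zorn's lemma then yields a maximal finite subgroup $M \supseteq F$. This is the step where the whole argument rests decisively on the class $\C_{cen}$, and I would expect it to be the main obstacle, as the remaining items are cleaner consequences of maximality.

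The next key observation is that $M$ cannot be contained in $Z(G)$: since $Z(G)$ is finite by Proposition \ref{fact:CG:cofinite}(1), while $G$ is infinite and torsion by Proposition \ref{fact:CG:cofinite}(5), any $g \in G \setminus Z(G)$ yields a finite subgroup $Z(G) \cdot \langle g \rangle$ strictly larger than $Z(G)$, so the assumption $M = Z(G)$ would contradict maximality. Hence $M$ contains some $g_0 \in G \setminus Z(G)$. The inclusion $Z(G) \leq Z(M)$ now follows easily: $Z(G) \cdot M$ is a finite subgroup containing $M$, so maximality forces $Z(G) \leq M$, and central elements of $G$ obviously lie in $Z(M)$. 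The inclusion $Z(M) \leq C_G(M)$ is trivial.

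For the key equality $C_G(M) = Z(M)$, I would use Proposition \ref{fact:CG:cofinite}(3) to conclude that $C_G(g_0)$ is finite, hence so is $C_G(M) \leq C_G(g_0)$; since $C_G(M)$ is centralized by $M$, the product $M \cdot C_G(M)$ is a finite subgroup containing $M$, whence maximality gives $C_G(M) \leq M$ and therefore $C_G(M) \leq Z(M)$. Finally, for $N_G(M) = M$, I would invoke Proposition \ref{fin:normalizer}: since $M \not\leq Z(G)$, the normalizer $N_G(M)$ must be finite, and maximality of $M$ combined with $M \leq N_G(M)$ forces the equality $N_G(M) = M$.
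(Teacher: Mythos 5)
Your proof is correct and follows essentially the same route as the paper: existence via Corollary \ref{no:locally:finite} applied to unions of chains of finite subgroups, $Z(G)\leq M$ by maximality of $M$ and finiteness of $Z(G)$, and $N_G(M)=M$ via Proposition \ref{fin:normalizer} after excluding $M\leq Z(G)$. The only cosmetic difference is the step $C_G(M)=Z(M)$, where the paper simply observes $C_G(M)\leq N_G(M)=M$ and hence $C_G(M)=C_G(M)\cap M=Z(M)$, while you reach the same conclusion slightly more laboriously through the finiteness of $C_G(g_0)$ and maximality applied to the finite subgroup $M\cdot C_G(M)$.
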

\begin{proof} Let $H$ be a finite subgroup of $G$. The union of an ascending chain of finite subgroups of $G$ is locally finite, thus finite by Corollary \ref{no:locally:finite}.
 Hence, every ascending chain of finite subgroups of $G$  containing $H$ stabilizes. Therefore, maximal finite subgroups containing $H$ exist.

Let $M$ be a maximal finite subgroup of $G$. As $Z(G)$ is finite, the subgroup $\langle M, Z(G)\rangle$ is finite and contains $M$, so $Z(G)\leq M$.

If the normalizer $N_G(M)$ of $M$ is finite, then $N_G(M)=M$ by maximality of $M$. 
Otherwise, Proposition \ref{fin:normalizer} yields that $M$ is central, so $M = Z(G)$. 
Take an element $g \in G \setminus Z(G)$ and put $H = \langle g, Z(G)\rangle$. Then $Z(G) \lneq H$. By  Proposition \ref{fact:CG:cofinite}(5), $G$ is torsion, so  $ \langle g \rangle$ is finite. As $H$ is isomorphic to a quotient of the finite group $\langle g \rangle \times Z(G)$, we deduce that $H$ is finite.
This contradicts the maximality of $M$, since the finite subgroup $H$ properly contains  $M = Z(G)$.

Obviously $C_G(M)\leq N_G(M)=M$ and so $Z(M) = C_G(M)\cap M = C_G(M)$.
\end{proof}

\begin{proposition}\label{intersection of finite is central2}
Let $G$ be an infinite non-abelian $\mathfrak C'$-cofinite $p$-group, where $p$ is a prime number. Then $M_1\cap M_2 = Z(G)$ for every pair of maximal finite subgroups $M_1\neq M_2$ of $G$. 
\end{proposition}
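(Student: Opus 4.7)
\medskip

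\noindent\textbf{Proof plan for Proposition \ref{intersection of finite is central2}.}

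One inclusion is free: by Proposition \ref{maximal finite sub} we have $Z(G) \leq M_1$ and $Z(G) \leq M_2$, hence $Z(G) \leq M_1 \cap M_2$. The content of the statement is therefore the reverse inclusion, which I will establish by contradiction. The key tools will be Proposition \ref{fin:normalizer} (finite subgroups of $G$ that are not central have finite normalizer) and the normalizer condition in finite $p$-groups (a proper subgroup of a finite $p$-group is strictly contained in its normalizer), the latter being applicable here because, as $G$ is a $p$-group, every finite subgroup of $G$ is a finite $p$-group.

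Assume, for contradiction, that there exist distinct maximal finite subgroups $M_1, M_2$ of $G$ with $H := M_1 \cap M_2 \supsetneq Z(G)$. Fix $M_1$, and among all maximal finite subgroups $B$ of $G$ with $B \neq M_1$ and $M_1 \cap B \supsetneq Z(G)$, choose $M_2$ so that the intersection size $|M_1 \cap M_2|$ is as large as possible; such an extremal choice exists because $|M_1 \cap B| \leq |M_1| < \infty$. Since $H \supsetneq Z(G)$, Proposition \ref{fin:normalizer} gives that $N_G(H)$ is finite. Since $M_1 \neq M_2$ and both are maximal finite subgroups, neither can contain the other, so $H \lneq M_i$ for $i=1,2$; then the normalizer condition in the finite $p$-groups $M_i$ yields $N_{M_i}(H) \supsetneq H$. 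In particular $N_G(H)$ is a finite subgroup of $G$ strictly containing $H$ and containing both $N_{M_1}(H)$ and $N_{M_2}(H)$.

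Using that finite subgroups of $G$ embed in maximal finite subgroups (Proposition \ref{maximal finite sub}), I extend $N_G(H)$ to a maximal finite subgroup $M$ of $G$ and analyse three cases. If $M = M_1$, then $N_{M_2}(H) \subseteq M = M_1$, so $N_{M_2}(H) \subseteq M_1 \cap M_2 = H$, contradicting $N_{M_2}(H) \supsetneq H$. The case $M = M_2$ is symmetric. In the remaining case $M \notin \{M_1, M_2\}$, we have
\[
M \cap M_1 \;\supseteq\; N_{M_1}(H) \;\supsetneq\; H \;=\; M_1 \cap M_2 \;\supsetneq\; Z(G),
\]
so $M$ is a maximal finite subgroup distinct from $M_1$ whose intersection with $M_1$ is strictly larger than $M_1 \cap M_2$, contradicting the extremal choice of $M_2$.

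The routine part is the normalizer-condition step inside each finite $p$-group $M_i$; the conceptually delicate ingredient is the extremal choice of $M_2$, which is what makes the three-case analysis close the argument (and which crucially uses that $G$ is a $p$-group, so all finite subgroups are $p$-groups and the sizes $|M_1 \cap B|$ are bounded by the fixed integer $|M_1|$). The expected main obstacle is precisely to guarantee that the third case cannot recur indefinitely; the extremal choice converts what would otherwise be an infinite descent into an immediate contradiction.
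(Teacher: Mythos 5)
Your proof is correct and follows essentially the same strategy as the paper's: an extremal choice of $M_2$ (maximizing the intersection with $M_1$), the normalizer condition in the finite $p$-groups $M_i$ to get $N_{M_i}(H)\supsetneq H$, Proposition \ref{fin:normalizer} to make $N_G(H)$ finite, and extension of $N_G(H)$ to a maximal finite subgroup $M$ to contradict the extremality. The only cosmetic difference is that the paper collapses your three cases by deducing $M=M_2$ directly from the maximality and then reading off the contradiction $M_1\cap M_2\gvertneqq M_1\cap M_2$.
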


\begin{proof}
By Proposition \ref{maximal finite sub}, we have $M_1\cap M_2 \geq Z(G)$. To prove that $M_1\cap M_2 \leq Z(G)$, without loss of generality we can assume that $M_2$ has also the additional property that $I=M_1\cap M_2$ is maximal among the subgroups of $G$ of the form $M_1 \cap M$, as $M\neq M_1$ ranges among the maximal finite subgroups of $G$.

By Proposition \ref{fin:normalizer}, either $I\leq Z(G)$, or $N_G(I)$ is finite, and we shall exclude the latter case. So assume by contradiction $N_G(I)$ to be finite, and let $M$ be a maximal finite subgroup containing $N_G(I)$. Then obviously $M$ contains both $N_{M_1}(H)$ and $N_{M_2}(H)$ and we have the following diagram in the lattice of subgroups of $G$: 

\bigskip 
\begin{center}
\begin{tikzpicture}
\node(M_1) at(-2,1)     {$M_1$};
\node(M) at(0,1)     {$M$};
\node(M_2) at(2,1)     {$M_2$};
\node(N_1) at(-2,-1)     {$N_{M_1}(I)$};
\node(N) at(0,0)     {$N_{G}(I)$};
\node(N_2) at(2,-1)     {$N_{M_2}(I)$};
\node(I) at(0,-2)     {$I=M_1\cap M_2$};
\draw(M_1)  -- (N_1);
\draw(M)  -- (N);
\draw(M_2)  -- (N_2);
\draw(N)  -- (N_1);
\draw(N)  -- (N_2);
\draw(N_1)  -- (I);
\draw(N_2)  -- (I);
\end{tikzpicture}
\end{center}

\bigskip 

As $M_1$ is a finite $p$-group and $I \neq M_1$, one has $N_{M_1}(I) \gvertneqq I$, and so
\begin{equation}\label{Max}
M_1\cap M\geq M_1\cap N_{G}(I) \geq M_1\cap N_{M_1}(I) = N_{M_1}(I) \gvertneqq I=M_1\cap M_2.
\end{equation}
Now $M_1\cap M  \gvertneqq M_1\cap M_2$ and the special choice of $M_2$ ensuring the maximality of $M_1\cap M_2$ implies that $M=M_2$ . Therefore (\ref{Max}) gives now  $M_1\cap M_2\gvertneqq M_1\cap M_2$, a contradiction.
\end{proof}

In Proposition \ref{intersection of finite is central2}, the assumption on $G$ to be a $p$-group can be relaxed to the following one: every finite subgroup of $G$ is nilpotent (which is equivalent to: every finite subgroup of $G$ has the normalizer condition).

\begin{corollary}\label{cor:maximal}
Let $G$ be an infinite non-abelian $\mathfrak C'$-cofinite $p$-group, where $p$ is a prime number. Then every element $g \in G \setminus Z(G)$ in contained in a unique maximal finite subgroup $M_g$ of $G$, and $N_G( \langle g \rangle ) \leq M_g$.
\end{corollary}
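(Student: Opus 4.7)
The strategy is a direct assembly of the three ingredients just established (Propositions \ref{maximal finite sub}, \ref{fin:normalizer}, and \ref{intersection of finite is central2}), together with the fact that $G$, being an infinite $\mathfrak C'$-cofinite $p$-group, is torsion by Proposition \ref{fact:CG:cofinite}(5), and hence every cyclic subgroup is finite.

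\textbf{Existence.} Given $g \in G \setminus Z(G)$, the cyclic subgroup $\langle g \rangle$ is finite. By Proposition \ref{maximal finite sub}, $\langle g \rangle$ is contained in some maximal finite subgroup $M_g$ of $G$.

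\textbf{Uniqueness.} Suppose $g$ belongs to two maximal finite subgroups $M_1$ and $M_2$. Then $g \in M_1 \cap M_2$. By Proposition \ref{intersection of finite is central2}, either $M_1 = M_2$ or $M_1 \cap M_2 = Z(G)$; the latter is excluded since $g \notin Z(G)$. Hence $M_1 = M_2$, proving that the maximal finite subgroup containing $g$ is unique.

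\textbf{The containment $N_G(\langle g \rangle) \leq M_g$.} Since $g \notin Z(G)$, the finite cyclic subgroup $\langle g \rangle$ is not contained in $Z(G)$, so Proposition \ref{fin:normalizer} forces $N_G(\langle g \rangle)$ to be finite. Apply Proposition \ref{maximal finite sub} once more to obtain a maximal finite subgroup $M$ of $G$ containing $N_G(\langle g \rangle)$. In particular $g \in \langle g \rangle \leq N_G(\langle g \rangle) \leq M$, and by the uniqueness proved above we must have $M = M_g$, so $N_G(\langle g \rangle) \leq M_g$.

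There is essentially no obstacle here beyond correctly invoking the already-proven results; the only point one must not overlook is that $\langle g \rangle$ is automatically finite (so that Propositions \ref{fin:normalizer} and \ref{intersection of finite is central2} apply), which follows from $G$ being a torsion $p$-group via Proposition \ref{fact:CG:cofinite}(5).
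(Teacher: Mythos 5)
Your proof is correct and follows essentially the same route as the paper: both arguments combine Proposition \ref{maximal finite sub} for existence, Proposition \ref{intersection of finite is central2} for uniqueness (using $g \notin Z(G)$ to rule out the intersection being $Z(G)$), and Proposition \ref{fin:normalizer} to get $N_G(\langle g \rangle)$ finite and hence inside a maximal finite subgroup that must be $M_g$. Your explicit remark that $\langle g \rangle$ is finite because $G$ is torsion (Proposition \ref{fact:CG:cofinite}(5)) is a detail the paper leaves implicit, but otherwise the two proofs coincide.
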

\begin{proof}
By Proposition \ref{fin:normalizer}, $N_G( \langle g \rangle )$ is finite, and let $M$ be a maximal finite subgroup of $G$ containing $N_G( \langle g \rangle )$. 

If $g \in \widetilde{M}$ for a maximal finite subgroup $\widetilde{M}$ of $G$, then $\langle g \rangle \leq M \cap \widetilde{M}$, so that $M = \widetilde{M}$ by Proposition \ref{intersection of finite is central2}.
\end{proof}

\begin{corollary}
 Let $p$ be a prime and $G$ be an infinite non-abelian $\mathfrak C'$-cofinite $p$-group. 
 If $x,y \in G$ do not belong to the same maximal finite subgroup of $G$, then $H=\langle x,y\rangle$ is infinite and $Z(H) = Z(G) \cap H$. In particular,
   $|Z(H)|=1$, provided $|Z(G)|=1$. 
\end{corollary}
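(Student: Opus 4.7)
The plan is to prove the two claims (infinitude of $H$ and the identification $Z(H)=Z(G)\cap H$) separately, with the second following quickly once the first is established. Both parts will use ingredients already collected in the paper: the existence of maximal finite subgroups containing any given finite one (Proposition \ref{maximal finite sub}), and the fact that a non-central element in an infinite non-abelian $\mathfrak{C}'$-cofinite group has finite centralizer (Proposition \ref{fact:CG:cofinite}(3)). Note that the hypothesis forces both $x$ and $y$ to lie outside $Z(G)$: since $G$ is torsion (Proposition \ref{fact:CG:cofinite}(5)), every element together with the finite subgroup $Z(G)$ generates a finite subgroup, which by Proposition \ref{maximal finite sub} is contained in some maximal finite subgroup, and $Z(G)$ sits inside every such subgroup.

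For the first assertion, I would argue by contradiction: if $H=\langle x,y\rangle$ were finite, then Proposition \ref{maximal finite sub} would embed $H$ into some maximal finite subgroup $M$ of $G$, forcing $x,y\in M$ and contradicting the assumption that $x$ and $y$ do not lie in a common maximal finite subgroup. This is the easy step.

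The main step is to show $Z(H)\subseteq Z(G)$ (the reverse inclusion $Z(G)\cap H\subseteq Z(H)$ being immediate). Take $z\in Z(H)$ and suppose, for a contradiction, that $z\notin Z(G)$. Then by Proposition \ref{fact:CG:cofinite}(3) the centralizer $C_G(z)$ is finite. But $z$ commutes with both $x$ and $y$, so $\{x,y\}\subseteq C_G(z)$, whence $H=\langle x,y\rangle\leq C_G(z)$ is finite, contradicting the infinitude of $H$ just proved. Hence $z\in Z(G)\cap H$, yielding $Z(H)=Z(G)\cap H$. The final ``in particular'' clause is then the trivial specialisation to the case $Z(G)=\{e\}$. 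The principal obstacle — or rather the only nontrivial point — is noticing that non-central elements are forced to have finite centralizers, which is precisely why the $\mathfrak{C}'$-cofinite hypothesis on $G$ makes the argument go through; no use of the $p$-group assumption or of Proposition \ref{intersection of finite is central2}/Corollary \ref{cor:maximal} is actually required.
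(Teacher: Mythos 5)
Your proof is correct, and for the main step it takes a genuinely different and more economical route than the paper. The paper proves $Z(H)\leq Z(G)\cap H$ by writing $Z(H)=C_G(x)\cap C_G(y)\cap H\leq N_G(\langle x\rangle)\cap N_G(\langle y\rangle)\cap H$ and then invoking Corollary \ref{cor:maximal} (each non-central element lies in a \emph{unique} maximal finite subgroup $M_g$, which contains $N_G(\langle g\rangle)$) together with Proposition \ref{intersection of finite is central2} ($M_x\cap M_y=Z(G)$ for distinct maximal finite subgroups); both of those results rest on the $p$-group hypothesis. You instead observe that any $z\in Z(H)\setminus Z(G)$ would have finite centralizer by Proposition \ref{fact:CG:cofinite}(3), yet $C_G(z)\supseteq\{x,y\}$ forces $H=\langle x,y\rangle\leq C_G(z)$, contradicting the infinitude of $H$ established in the first step. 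This is shorter, uses only Propositions \ref{maximal finite sub} and \ref{fact:CG:cofinite}, and — as you note — dispenses with the $p$-group assumption altogether, so your argument actually proves the statement for an arbitrary infinite non-abelian $\mathfrak{C}'$-cofinite group. The first step (if $H$ were finite it would sit inside a single maximal finite subgroup) is identical to the paper's. What the paper's heavier route buys is consistency with the surrounding subsection, whose point is precisely the structure of maximal finite subgroups; what yours buys is generality and a one-line reduction to the centralizer-finiteness that defines the class $\C_{cen}$.
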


\begin{proof} If $H$ were finite, we could find a maximal finite subgroup $M$ containing $H$, by virtue of Proposition \ref{maximal finite sub}.
Since $M$ contains both $x$ and $y$, this will contradict our hypothesis that $x$ and $y$ do not belong to the same maximal finite subgroup of $G$.
Moreover,  
\[
Z(H) = Z(\langle x,y\rangle) = C_G(x)\cap C_G(y)\cap H \leq N_G(x)\cap N_G(y)\cap H.
\]
Applying respectively Corollary \ref{cor:maximal} and Proposition \ref{intersection of finite is central2} we obtain
$Z(H)\leq M_x\cap M_y \cap H = Z(G) \cap H$. Since obviouosly $Z(H) \geq Z(G) \cap H$, we obtain $Z(H) = Z(G) \cap H$.
\end{proof}

\section{Final comments and open question}


 \begin{problem}
 Describe the potentially dense subsets of a free non-abelian group $F$. Is it true that 
 every $\Zar$-dense subset of $F$ is potentially dense? 
\end{problem}

Recall that $\Mar_F= \Zar_F=\mathfrak C_F$, by the recent result of  Shakhmatov--Yañez \cite{SY} and  Example \ref{zariski topology on a free group}, so the $\Zar$-dense (i.e., $\Mar$-dense)  subsets of $F$ are the subsets that are not contained in a finite union of cosets of cyclic subgroups of $F$.

 \begin{problem}
 Describe the potentially dense subsets of the Heisenberg group $\mathbb H_\Q$. Is it true that 
 every $\Zar$-dense subset of $\mathbb H_\Q$ is potentially dense? 
\end{problem}

Recall, that $\Zar_{\mathbb H_\Q}=\mathfrak C_{\mathbb H_\Q}$, by Example \ref{zariski topology on a free group}.

\begin{question}\label{Mcount}
 Does the counterpart of Theorem \ref{cof:elem:model_intro} for the class $\M$ remain true? 
\end{question}

A positive answer to this question, along with Theorem \ref{cof:elem:model_intro}, would imply that $\M = \C$, due to Markov's theorem on coincidence of
Markov and Zariski topologies for countable groups. On the other hand, a counter-example $G$ will be a group with $\Zar_G \ne \Mar_G$ (so providing 
a negative answer to Markov's Problem \ref{Markov's:problem*}). Indeed, $\Mar_G \ne \cof_G$, while $\Mar_H = \cof_H$ for every countable subgroup $H$
of $G$. This implies $\Zar_H = \cof_H$ for every countable subgroup $H$ of $G$, so $G \in \C$, by Theorem \ref{cof:elem:model_intro}. 
Now $\Zar_G = \cof_G \ne \Mar_G$ entails the desired inequality $\Zar_G \ne \Mar_G$. 


\begin{question}\label{Ques:WCL:bis} {\rm \cite[Problem 6.4]{TY}, \cite[Problem 14]{BT}} Does every uncountable $\Zar$-cofinite Abelian group admit two independent Hausdorff group topologies?
\end{question}

By Theorem \ref{TY:Ind/Mar-cofin}, Question \ref{Ques:WCL:bis} has a positive answer for Abelian groups of size $\cont$, under the assumption of MA. The rigid restriction on the  size is imposed by the approach used in \cite{TY}, based on the following stronger result of independent interest: every second countable group topology on an Abelian  ${\mathfrak Z}$-cofinite group of size $\cont$ 
admits an independent group topology \cite[Theorem 5.2]{TY}. Moreover, the topologies independent with a second countable one are necessarily 
countably compact (\cite[Proposition 2.4]{TY}), this imposes $|G|\geq \cont$ 
(since an infinite countably compact groups has size $\geq \cont$). On the other hand, 
every second countable Hausdorff group has size $\leq \cont$, so $|G| = \cont$. 

\medskip

It is easy to check that $( E_{g x^n} )^{-1} = E_{g^{-1} x^n}$, yet it is not clear whether the traslate of a set $E_{g x^n}$ is ${\mathfrak Z}_{mon}$-closed, and this leaves open the following question: 

\begin{question}\label{Mon_vs_QTOP} Does there exist a group $G$ such that $(G,{\mathfrak Z}_{mon,G})$ is not a quasi-topological group?
\end{question}

For a group $G$ the poset $\mathfrak Q(G)$ of all quasitopological group topologies on $G$, as a subposet of the complete lattice  $\mathfrak T(G)$ of all topologies on the set $G$, is a complete lattice and its complete lattice operations coincide with those of $\mathfrak T(G)$ (although this fails to be true for the smaller poset $\mathfrak L(G)$ of all group topologies on $G$, see \cite{ADB}).  In particular, for every $\tau\in \mathfrak T(G)$ there is a coarsest $\tau^q\in \mathfrak Q(G)$ finer than $\tau$, namely $\tau^q = \inf \{\sigma \in \mathfrak Q(G): \sigma \geq \tau\}$.  
So, $\mathfrak{Z}_{mon,G} \leq \mathfrak{Z}_{mon,G}^q \leq \zar_G$. 
Obviously, the ${\mathfrak Z}_{mon}$-cofinite and the ${\mathfrak Z}_{mon}^q$-cofinite groups coincide, this is why we preferred to focus on ${\mathfrak Z}_{mon}$-cofiniteness, since ${\mathfrak Z}_{mon}$ is easier to deal with.

\begin{question}\label{Ques:WCL}
Are nilpotent almost torsion-free groups always WCL? 
\end{question}

\medskip

Recall that the groups from $\C_{cen}\cap \C_{mon}$ have prime exponent, by Proposition \ref{mon:cof:atr:or:prime:exp}.
\begin{question} 
Let $G\in \C_{cen}\cap \C_{mon}$ be  infinite and non-abelian. 
\begin{itemize}
  \item Is $G$ a Tarski monster group?
  \item If not: are maximal finite subgroups of $G$ cyclic? 
  \item If not: are maximal finite subgroups abelian? are the cardinalities of maximal finite subgroups bounded? 
  \item Are all maximal finite subgroups conjugated?
\end{itemize}
What if in addition $G\in \C$?
\end{question}

\begin{question}\label{Ques:x}
How many conjugacy classes of elements are there in  an infinite non-abelian $\mathfrak C'$-cofinite group? 
What about $\zar$-cofinite ones?
\end{question}

Proposition \ref{infinite:intersection} shows that  the family $\mathcal B$ of all infinite normal subgroups 
of an infinite non-abelian $\mathfrak C'$-cofinite group $G$ is a filter base, so gives rise to a group topology 
$\mathcal T_G$ on $G$ with $\mathcal B$ as a local base at $e$.  

\begin{question}\label{question:hausdorff:top}
Is $\mathcal T_G$ is a Hausdorff topology?
\end{question}

A group $G$ is called an \emph{FNI-group} (for Finite Normalizer Index) if for every subgroup $H$ of $G$ 
\begin{equation}\label{eq:FNI}
\text{either } [N_G(H):H]< \infty, \text{ or $H$ is normal in $G$.}
\end{equation} 
By Proposition \ref{fin:normalizer}, finite subgroups of a $\mathfrak C'$-cofinite group satisfy \eqref{eq:FNI}. We do not know if remains true for all subgroups of $G$:

\begin{question}
Are the groups in $\C_{cen}$ necessarily FNI? 
\end{question}

We conclude with our main conjecture: 

\begin{conjecture}\label{MainConj}
Every infinite group in $\C$ is necessarily abelian, so the same holds for the smaller class $\M$.
\end{conjecture}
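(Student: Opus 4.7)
The plan is to try to derive a contradiction from the assumption that an infinite non-abelian group $K \in \C$ exists, by pushing the structural constraints already accumulated in the paper to the point of incompatibility with the Zariski-cofiniteness hypothesis. By Theorem \ref{cof:elem:model_intro} one may assume $K$ is countable, and by Theorem \ref{additional:properties} one may further assume $K$ is finitely generated, of prime exponent $p \geq 5$, perfect, center-free, indecomposable, with no proper finite-index subgroup and no proper finite normal subgroup. Combining Propositions \ref{fact:CG:cofinite}, \ref{maximal finite sub} and \ref{intersection of finite is central2} with Corollary \ref{cor:maximal}, one sees that $K$ is essentially forced to be Tarski-monster-like: every centralizer of a non-identity element is finite, every non-identity element lies in a unique maximal finite subgroup $M_g$, and distinct maximal finite subgroups intersect trivially.

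The key step is then to exhibit an infinite proper elementary algebraic subset of $K$, forcing the desired contradiction. The words already analyzed — monomials $gx^n$ and commutators $[g,x]$ — yield only finite proper solution sets for such $K$, so one must look for more elaborate words. A natural family to try consists of $w(x) = g_1 x^{\varepsilon_1} g_2 x^{\varepsilon_2} \cdots g_n x^{\varepsilon_n}$ with alternating signs $\varepsilon_i \in \{-1,+1\}$, where one attempts to choose the coefficients $g_i$ so that $E_w^K$ intersects infinitely many distinct maximal finite subgroups of $K$. The infinitude of every non-central conjugacy class (Proposition \ref{fact:CG:cofinite}(3)) and the absence of proper finite-index subgroups provide a large supply of candidates from which such a selection might be engineered. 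A parallel line of attack would be to show that the combinatorics of maximal finite subgroups forces the existence of an infinite abelian (or locally finite, or solvable) subgroup of $K$, directly contradicting Proposition \ref{fact:CG:cofinite}(2), Theorem \ref{no:solvable}, or Corollary \ref{no:locally:finite}. Another complementary attempt, exploiting the non-Engel property established in \S\ref{Engel}, would be to show that iterated commutators $[x,_n y]$ can be shown to lie in the solution set of some single elementary algebraic equation for infinitely many $n$.

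The main obstacle is essentially unavoidable: there is a genuine gap between the algebraic constraints available and the behaviour of arbitrary equations $w(x) = e$ in infinite finitely generated simple groups of prime exponent. Such groups, e.g.\ the Tarski monsters of Ol$'$shanskii, were constructed by delicate small-cancellation and lacunary-hyperbolic techniques, and controlling all elementary algebraic subsets in them appears to require importing geometric tools (van Kampen diagrams, hyperbolicity estimates) beyond what is derived purely algebraically in the paper. A realistic intermediate result would be to exclude explicitly Ol$'$shanskii-type Tarski monsters from $\C$ by exhibiting an infinite proper $E_w$ in them via small-cancellation analysis; a full proof of the conjecture seems to require showing that every infinite finitely generated group of prime exponent $p \geq 5$ satisfying all the accumulated structural properties admits some word with infinitely many solutions, a statement that lies in the range of problems solved by the positive solution of the restricted Burnside problem and its refinements.
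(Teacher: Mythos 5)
The statement you are addressing is Conjecture \ref{MainConj}: the paper itself offers no proof of it and explicitly leaves it open, so there is no argument of the authors to compare yours against. Your proposal is a research plan rather than a proof, and you concede as much in your final paragraph. The reductions you invoke are sound and are exactly the ones the paper establishes: Theorem \ref{cof:elem:model_intro} reduces to countable groups, Theorem \ref{additional:properties} to a finitely generated, perfect, center-free, indecomposable group $K$ of prime exponent $p\geq 5$ with no proper finite-index or finite normal subgroups, and Propositions \ref{fact:CG:cofinite}, \ref{maximal finite sub}, \ref{intersection of finite is central2} and Corollary \ref{cor:maximal} give the Tarski-monster-like picture of finite centralizers and almost-disjoint maximal finite subgroups. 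But the decisive step --- actually exhibiting a word $w$ with $E_w^K$ proper and infinite, or otherwise deriving a contradiction --- is never carried out; you only list families of words one ``might'' try and properties one ``might'' exploit. That missing step is not a technical detail: it is the entire content of the conjecture, and no amount of the structural information already in the paper is shown (by you or by the authors) to force its existence.

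Two smaller points. First, your proposed ``intermediate result'' of excluding the known Tarski monsters from $\C$ is already recorded in Example \ref{ex:Tarski:monsters}: the monsters constructed so far are non-topologizable, hence $\mathfrak M$-discrete, hence (being countable, so that $\zar_T=\mar_T$) $\mathfrak Z$-discrete and not in $\C$ --- though this goes through non-topologizability rather than an explicit infinite proper $E_w$, so it does not generalize to an arbitrary $K$ as above. Second, be careful with the claim that the conjecture would follow from showing that \emph{every} infinite finitely generated group of prime exponent $p\geq 5$ with the accumulated properties admits a word with infinitely many solutions: that is a plausible formulation of what remains, but it is strictly stronger than anything the restricted Burnside problem or its known refinements provide, so presenting it as ``in the range of'' those results overstates what is available. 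As it stands, your text should be read as a correct summary of the state of the problem, not as a proof.
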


A positive answer to this conjecture would imply a positive answer to Question \ref{Mcount} and the 
second part of Question \ref{Ques:x}, as well as a negative answer to Questions \ref{Q2} and \ref{Q3}.



\begin{thebibliography}{9}


%
%
%
%
%


\bibitem{AT} A. Arhangel$'$ski\u \i , M. Tkachenko, \emph{Topological groups and related structures},
Atlantis Studies in Mathematics, 1. Atlantis Press, Paris; World Scientific Publishing Co. Pte. Ltd., Hackensack, NJ, 2008.

\bibitem{ADB} L. Au\ss enhofer, D. Dikranjan, A. Giordano Bruno, {\em Topological Groups {and the Pontryagin-van Kampen
Duality}, An Introduction.}, Studies in Mathematics, de Gruyter 82, 2021.

%
\bibitem{BGP} T.~Banakh, I.~Guran, I.~Protasov,
		\emph{Algebraically determined topologies on permutation groups}, Topology Appl. 159 (9) (2012), ~2258--2268.
%
%
%

\bibitem{Bergman}  C. Bergman, \emph{Universal algebra: foundamentals and selected topics}, CRC Press (2011).

\bibitem{BT} A. B\l aszczyk, M. Tkachenko, {\em $T_1$--independent, and $T_1$-complementary topologies}, Topology Appl. 230 (2017), 308--337. 
 
\bibitem{BD} M. Bonatto, D. Dikranjan, {\it Generalized Heisenberg groups}, Q \& A Gen. Topology, 37 (2019) 89--108.
 
\bibitem{Bryant} R.M.~Bryant,
		\emph{The verbal topology of a group}, J. Algebra 48 (1977), no. 2, ~340--346.
%
%
%
\bibitem{ChiswRemesl} I.M.~Chiswell, V.~Remeslennikov,
		\emph{Equations in free groups with one variable. I}, 
		J. Group Theory 3 (2000), ~445--466.
%
%
%
%
%
%
%
%
%
%
%
%
%
%
%


\bibitem{Selected} D.~Dikranjan, D.~Shakhmatov,
		\emph{Selected topics from the structure theory of topological groups}, in:
		Open problems in topology II, E. Perl. Ed., Elsevier (2007), ~389--406.
%
\bibitem{Reflection} D.~Dikranjan, D.~Shakhmatov,
		\emph{Reflection principle characterizing groups in which unconditionally closed sets are algebraic},
		J. Group Theory 11 (2008), no.~3, ~421--442.
%
\bibitem{Articolone} D.~Dikranjan, D.~Shakhmatov,
		\emph{The Markov-Zariski topology of an abelian group}, J. Algebra 324 (2010), ~1125--1158.
%
\bibitem{DS_HMP} D. Dikranjan, D. Shakhmatov, {\it Hewitt-Marczewski-Pondiczery type theorem for abelian groups and Markov's potential density}, Proc. Amer. Math. Soc. 138 (2010), 2979-2990.
%
\bibitem{Kronecker-Weyl} D.~Dikranjan, D.~Shakhmatov,
		\emph{A Kronecker-Weyl theorem for subsets of Abelian groups}, Advances in Mathematics,  226 (2011) 4776--4795.
%
\bibitem{DS_ConnectedMarkov} D. Dikranjan, D. Shakhmatov, {\it A complete solution of Markov's problem on connected group topologies}, Adv. Math. 286 (2016), 286--307.
\bibitem{DS:final} D.~Dikranjan, D.~Shakhmatov,
		\emph{Final solution of Protasov-Comfort's problem on minimally almost periodic group topologies}, 	arXiv:1410.3313.

\bibitem{survey} D.~Dikranjan, D.~Toller,
		\emph{Markov's problems through the looking glass of Zariski and Markov topologies}, 
		Ischia Group Theory 2010, Proc. of the Conference, World Scientific Publ. Singapore (2012), ~87--130.

\bibitem{Zariski_linear} D.~Dikranjan, D.~Toller,
		\emph{The Markov and Zariski topologies of some linear groups}, Topology Appl. 159 (2012) no. 13, ~2951--2972.
\bibitem{ProductivityZariski} D.~Dikranjan, D.~Toller,
		\emph{Productivity of the Zariski topology on groups}, Comment. Math. Univ. Carolin. 54 (2013), no. 2, ~219--237.
\bibitem{secondo:survey} D.~Dikranjan, D.~Toller,
		\emph{Zariski topology and Markov topology on groups}, Topology Appl. 241 (2018) no. 24, 115--144.

%
%
%

%
%
%
%
%
%
%
%
%
%
%
\bibitem{Guba} V.~Guba,
		\emph{Equivalence of infinite systems of equations in free groups and semigroups to finite subsystems}, Mat. Zametki 40, no. 3 (1986), ~321--324.
%
%
\bibitem{Hesse} G.~Hesse, 
		\emph{Zur Topologisierbarkeit von Gruppen}, PhD thesis, Univ. Hannover, Hannover (1979).
		
%
%
%
%

\bibitem{Kos} A. I. Kostrikin,
		\emph{On Burnside's problem}, 
		Dokl. Akad. Nauk SSSR 119 (1958),
 1081-1084. (Russian)
%
%
%
%
%
%
%
%
%



\bibitem{Mann} A. Mann, {\em How Groups Grow}, London Mathematical Society Lecture Note Series: 395, 
University Press, Cambridge, 2012. 

\bibitem{Markov1} A.A.~Markov,
		\emph{On unconditionally closed sets}, 
		Comptes Rendus Doklady AN SSSR (N.S.) 44 (1944), ~180--181 (in Russian).

%
\bibitem{Meg} M.G.~Megrelishvili, 
\emph{Group representations  and construction of minimal topological groups}, Topol. Appl. 62 (1995), ~1--19.

%
%
%
%
%
%
\bibitem{Ol'shanski79} A.~Yu.~Ol$'$shanskij, 
\emph{Infinite groups with cyclic subgroups}, Dokl. Akad. Nauk SSSR 245 (1979), no. 4, 785--787 (in Russian).

\bibitem{Ol'shanski81} A.~Yu.~Ol$'$shanskij, 
\emph{An infinite group with subgroups of prime orders}, Math. USSR Izv. 16 (1981), 279--289; translation of Izvestia Akad. Nauk SSSR Ser. Matem. 44 (1980), 309--321.
%

\bibitem{Olshanski2021} A.~Yu.~Ol$'$shanskij, \emph{private communication}.

%
%
%
%
%
%
%
%
\bibitem{Peng} T. A. Peng, \emph{Engel elements of groups with maximal condition on abelian subgroups}, Nanta Math. 1 (1966) 23-28.

\bibitem{Robinson} D.J.S. Robinson,
\emph{A Course in the Theory of Groups}, Springer Graduate Texts in Mathematics (1995).

%

\bibitem{SY} S.~Shakhmatov, V.  Yañez, {\em Unconditionally closed subsets of free (non-commutative) groups}, 
Int. Conf. Topology and its Applications dedicated to the 100th anniversary of the birthday Yu.M. Smirnov, Moscow, 
September 21, 2021.

\bibitem{Sip2} O.\,V.~Sipacheva, \emph{A class of groups in which all unconditionally closed sets are algebraic}, 
		(Russian) Fundam. Prikl. Mat. 12 (2006), no. 8, 217--222;
		translation in J. Math. Sci. (N. Y.) 152 (2008), no. 2, ~288--291.
%
\bibitem{Tits} J.~Tits,
\emph{Free subgroups in linear groups}, J. Algebra, 20 (1972) no. 2, 250--270. 

\bibitem{TY} M.G.~Tkachenko, I.Yaschenko,		\emph{Independent group topologies on abelian groups}, Topology Appl. 122 (2002), ~435--451.
%
\bibitem{verbal:functions} D.~Toller,
	\emph{Verbal functions of a group}, Rend. Istit. Mat. Univ. Trieste, Volume 46 (2014),  71--99.

%
%

\bibitem{Zelmanov}
E. I. Zelmanov, \emph{A solution of the restricted Burnside problem for $2$-groups}, Math. USSR Sb. 72 (1992), 543--565.

\bibitem{Zelmanov2}
E. I. Zelmanov, \emph{Solution of the restricted Burnside problem for groups of odd exponent}, Math. USSR Izv. 36 (1991), 41--60.

%
%
%
%
	
%
%
%
%
%
		
\bibitem{Z} E. I. Zelmanov,  \textit{On periodic compact groups}, Israel J. Math. 77 (1992), 83--95.

\end{thebibliography}
\end{document}